\renewcommand\theequation{\thesection.\arabic{equation}}
\newcommand{\BC}{\mathbb {C}}
\newcommand{\BF}{\mathbb {F}}
\newcommand{\BZ}{\mathbb {Z}}
\def\Ddots{\mathinner{\mkern1mu\raise\p@
\vbox{\kern7\p@\hbox{.}}\mkern2mu
\raise4\p@\hbox{.}\mkern2mu\raise7\p@\hbox{.}\mkern1mu}}
\newcommand{\CJ}{\mathcal {J}}
\newcommand{\CO}{\mathcal {O}}
\newcommand{\CP}{\mathcal {P}}
\newcommand{\CS}{\mathcal {S}}
\newcommand{\scusp}{\mathrm{sc}}
\newcommand{\Ad}{\operatorname{Ad}}
\newcommand{\Aut}{\operatorname{Aut}}
\def\bsN{{\boldsymbol{N}}}
\newcommand{\Gal}{\operatorname{Gal}}
\newcommand{\GL}{\operatorname{GL}}
\newcommand{\Ind}{\operatorname{Ind}}
\newcommand{\Irr}{\operatorname{Irr}}
\newcommand{\Irrgen}{\Irr^{\operatorname{gen}}}
\newcommand{\oIrrtemp}{\overline{\Irr}^{\operatorname{temp}}}
\newcommand{\oIrrchigen}{\overline{\Irr}^{\xi\operatorname{-gen}}}
\newcommand{\oPhitemp}{\overline{\Phi}^{\operatorname{temp}}}
\newcommand{\oPhigen}{\overline{\Phi}^{\operatorname{gen}}}
\newcommand{\Id}{\operatorname{Id}}
\renewcommand{\mid}{\,:\,}
\newcommand{\N}{\operatorname{N}}
\newcommand{\PGL}{\operatorname{PGL}}
\newcommand{\PGSp}{\operatorname{PGSp}}
\newcommand{\Res}{\operatorname{Res}}
\newcommand{\SL}{\operatorname{SL}}
\newcommand{\OO}{\operatorname{O}}
\newcommand{\SO}{\operatorname{SO}}
\newcommand{\G}{\mathrm{G}}
\newcommand{\Sp}{\operatorname{Sp}}
\newcommand{\st}{\operatorname{st}}
\newcommand{\tr}{\operatorname{tr}}
\newcommand{\val}{\operatorname{val}}
\newcommand{\Wedge}{\mathord{\adjustbox{valign=B,totalheight=.6\baselineskip}{$\bigwedge$}}}
\newcommand{\WD}{{WD}}
\newcommand{\ts}{\textstyle}
\def\boI{\boldsymbol 1}
\def\diag{\operatorname{diag}}
\def\L{\mathrm{L}}
\def\rhostd{\varrho_{\mathrm{std}}} 
\def\dss{\mathrm{ds}}
\def\vG{{}^\vee\G}
\def\vL{{}^\vee\L}
\newtheorem{thm}[equation]{Theorem}
\newtheorem{cor}[equation]{Corollary}
\newtheorem{lem}[equation]{Lemma}
\newtheorem{prop}[equation]{Proposition}
\newtheorem{conj}[equation]{Conjecture}
\newtheorem{ques/conj}[equation]{Question/Conjecture}
\newtheorem{ques}[equation]{Question}
\newtheorem{defn}[equation]{Definition}
\newtheorem{rmk}[equation]{Remark}
\newtheorem{exmp}[equation]{Example}
\def\lcttheoremtitle{Local Converse Theorem}
\newtheorem*{lcttheorem}{The~\lcttheoremtitle}
\def\classicallcttheoremtitle{Standard Local Converse Theorem for $G_N$}
\newtheorem*{classicallcttheorem}{The~\classicallcttheoremtitle}
\newcommand{\Rmnum}[1]{\expandafter\@slowromancap\romannumeral #1@}
\def\ignore#1{\relax}
\def\shaun#1{\textcolor{black}{#1}}
\def\todo#1{\textcolor{red}{#1}}
\definecolor{dgreen}{RGB}{0,205,10}
\def\bJ{\boldsymbol{\CJ}}
\def\bN{\boldsymbol{N}}
\def\br{\boldsymbol{r}}
\begin{document}
\renewcommand{\theequation}{\arabic{equation}}
\numberwithin{equation}{section}

\title[Local Converse Problem]{On sharpness in Local Converse Theorems for classical groups and $G_2$}

\author{Moshe Adrian}
\address{Department of Mathematics\\
Queens College, CUNY\\
Flushing, NY 11367}
\email{moshe.adrian@qc.cuny.edu}

\author{Shaun Stevens}
\address{
University of East Anglia, UK}
\email{shaun.stevens@uea.ac.uk}

\begin{abstract}
We prove various results about the Local Converse Problem for split reductive groups~$\G$ over a non-archimedean local field~$F$ of characteristic~$0$ and residual characteristic~$p$. In particular, we prove that when~$\G$ is a symplectic or special orthogonal group, or the exceptional group~$\G_2$, and~$p$ is large enough, then the optimal standard Local Converse Theorem for~$\G(F)$ requires twisting by representations of~$\GL_r(F)$ with~$r$ up to half the dimension of the standard representation of the dual group of~$\G$. However, if we restrict to generic supercuspidal representations of~$\G(F)$ then it can be improved when~$\G=\SO_{2N}$; we conjecture that the same is true for symplectic and odd special orthogonal groups. We also consider the possibility of using non-standard representations of the dual group to distinguish representations, giving counterexamples to possible improvements for general linear groups,~$G_2$ and~$\SO_{2N}$.
\end{abstract}

\date{\today}
\subjclass[2000]{Primary 11S70, 22E50; Secondary 11F85, 22E55.}
\keywords{Local converse problem, Jacquet's conjecture, Sharpness}
\thanks{The first-named author was supported by a grant from the Simons Foundation \#422638 and by a PSC-CUNY award, jointly funded by the
Professional Staff Congress and The City University of New York. The second-named author was supported by EPSRC grant EP/V061739/1.}
\maketitle

\section{Introduction}

Let~$F$ be a non-archimedean local field of characteristic~$0$ and residual characteristic~$p$, and let~$G=\G(F)$ be the group of points of a connected reductive group over~$F$. Given some set~$\CS$ of (equivalence classes of) irreducible (complex) representations of~$G$, a natural question to ask is:
\begin{itemize}
\item[(Q)] 
 Which invariants can we use to distinguish two elements of~$\CS$?
\end{itemize}
In the context of the local Langlands correspondence, and in this paper, the invariants we consider are \emph{twisted~$\gamma$-factors}, which are certain functions of a complex variable. 

For the group~$\G=\GL_N$ this question is now well-understood. We fix a non-trivial additive character~$\psi$ of~$F$ and let~$\CS=\Irrgen(\GL_N(F))$ be the set of irreducible generic representations of $\GL_N(F)$. Then the \emph{Local Converse Theorem for~$\GL_N$~\cite{JL18,Ch19}} says that one possible answer to (Q) is:
\begin{itemize}
\item[(A)] 
$\left\{\gamma(s, \pi \times \tau, \psi) \mid \tau\in\Irrgen(\GL_r(F)), r\le \left\lfloor\frac N2\right\rfloor\right\}$,
\end{itemize}
where the~$\gamma$-factors are those defined via Rankin--Selberg convolution~\cite{JPSS83} or by the Langlands--Shahidi method~\cite{S84}. Moreover, if~$p>N$ then this answer is optimal in the sense that the bound on~$r$ cannot be reduced, and it remains optimal if we replace~$\CS$ by the set of irreducible supercuspidal representations of~$\GL_N(F)$ (see~\cite{ALST18,A22}). Interestingly, if we look only at the set of \emph{simple} (or \emph{epipelagic}) supercuspidal representations (those of depth~$\frac 1N$) then one can do better: only twists by characters of~$\GL_1(F)$ are required (see~\cite[Proposition 2.2]{BH14}, and also~\cite{LS25} for further discussion of this).

For other groups~$\G$, analogous~$\gamma$-factors have also been defined; for example, for split classical groups over $p$-adic fields. However, the situation is not necessarily as clean as in the case of general linear groups: indeed, in full generality it \emph{should not even be possible} to distinguish irreducible representations using only~$\gamma$-factors, as we now explain. We will assume that the group~$\G$ is split and we fix a Whittaker datum for~$G=\G(F)$, so that \emph{generic} always means relative to this fixed datum.

Write~$\vG(\BC)$ for the complex dual group of~$\G$ and consider~$\varrho:\vG(\BC) \to \GL_M(\BC)$ an algebraic representation, for some integer~$M$. To each~$\pi\in\Irrgen(G)$, if we have a Langlands correspondence for~$G$ then there is a corresponding Langlands parameter~$\varphi_\pi$ and we should obtain a \emph{functorial lifting}~${}_\varrho\Pi\in\Irrgen(\GL_M(F))$ which corresponds to the Langlands parameter~$\varrho\circ\varphi_\pi$. Attached to the pair $(\pi, \varrho)$ is a conjectural gamma factor $\gamma(s, \pi, \varrho, \psi)$, predicted by the Langlands program.  Then an equality of twisted~$\gamma$-factors  for~$\pi_1,\pi_2\in\Irrgen(G)$ (relative to~$\varrho$, when these have been defined) would only guarantee~${}_\varrho\Pi_1 \cong {}_\varrho\Pi_2$, but not necessarily $\pi_1 \cong \pi_2$. This phenomenon is equivalent to the question of whether, given two generic Langlands parameters $\varphi_1,\varphi_2$ for~$G$, having~$\varrho\circ\varphi_1$ conjugate to~$\varrho\circ\varphi_2$ over~$\GL_M(\BC)$, implies that~$\varphi_1,\varphi_2$ are~$\vG(\BC)$-conjugate. There are some groups and representations for which this is true -- for example when~$\G$ is~$\Sp_{2N}$ or~$\SO_{2N+1}$ and~$\varrho$ is the standard representation -- but that is not always the case.

This question, as we allow~$\varrho$ to vary over all possible algebraic representations of~$\vG(\BC)$, is encapsulated in the notion of acceptability. First, we say that a morphism $\WD_F \rightarrow \vG(\BC)$ of the Weil-Deligne group is \emph{generic} if its adjoint $L$-function is holomorphic at $s = 1$; when the local Langlands correspondence is known, generic parameters correspond precisely to $L$-packets that have a generic element (see~\cite[\S2.2]{M24} and~\cite[Proposition~B.1]{GI16}).  \shaun{We say that two morphisms~$\varphi,\varphi':\WD_F\to\vG(\BC)$ are \emph{locally conjugate} if, for all algebraic representations~$\varrho$ of~$\vG(\BC)$, we have~$\varrho\circ\varphi\cong\varrho\circ\varphi'$; equivalently (see \cite{CG23}), 
$\varphi(w)$ and $\varphi'(w)$ are conjugate in $\vG(\BC)$ for each $w\in\WD_F$.}  Finally, we say that~$\vG(\BC)$ is 
\emph{generically $\WD_F$-acceptable}
  if any pair of \shaun{locally conjugate} 
 generic morphisms~$\WD_F\to\vG(\BC)$ are in fact conjugate \shaun{(i.e. there is a $g \in \vG(\BC)$ such that $\varphi' = \Ad(g)\circ \varphi$)}.  Then, the most general version of the local converse problem on the automorphic side, when transferred to the Galois side, requires 
 a given dual group $\vG(\BC)$ to be 
 generically $\WD_F$-acceptable.

On the other hand, there is a more general definition of acceptability, which we now recall (following \cite{L94,CG23,M24}).  We say that $\vG(\BC)$ is \emph{acceptable} if, for any \emph{finite} group $\Gamma$, any two \shaun{locally conjugate} group homomorphisms $\varphi, \varphi' : \Gamma \rightarrow \vG(\BC)$ are automatically conjugate.  Matringe~\cite{M24} has shown that if $\vG(\BC)$ is acceptable, then it is also generically $\WD_F$-acceptable.  However, it is unclear whether the reverse implication is true. For example, it is known that $\SO_{2N}(\mathbb{C})$ is unacceptable when~$N\ge 3$ (see~\cite{Yu22}) but unknown whether or not it is in general generically $\WD_F$-unacceptable. In \cite{M24}, it is shown that $\SO_{2N}(\mathbb{C})$ is generically $\WD_F$-unacceptable when $q \equiv 1 \pmod{4}$; we will show that the same is true when $q \equiv 3 \pmod{4}$.

We consider now the algebraic representation ring $\mathrm{R}[\vG]$ of~$\vG(\BC)$.  It is generated by finitely many representations (see for example~\cite[Section~23]{FH}).  For example, in the case of~$\GL_N(\BC)$ these are the exterior powers~$\Wedge^i$ of the standard representation, for~$i=1,\ldots N$.  If $\vG(\BC)$ is simply connected, these are the \emph{fundamental representations}, which are the irreducible finite-dimensional representations whose highest weight is a fundamental weight; see later for some other examples. Thus, $\WD_F$-acceptability of~$\vG(\BC)$ can be detected by only testing on a finite number of representations: if~$\varrho\circ\varphi\cong\varrho\circ\varphi'$ for these (finitely many) representations~$\varrho$ then it is also true for all algebraic representations $\rho$.

Thus, when there is a \emph{suitable local Langlands correspondence for~$\G$} (by which we mean one satisfying~\cite[Conjectures~6.1,6.5]{KT24} in the formulation of Kaletha--Taïbi, so it sends tempered representations to parameters with bounded image (\cite[Conjecture 6.1, \S 5.3]{KT24}), and there is unicity of generic representations in a tempered~$L$-packet) 
we get the following result immediately from the Local Converse Theorem for~$\GL_N$. 
To state it, we \emph{define} the twisted local factors by
\[
\gamma(s,\pi\rtimes\tau,\varrho,\psi) := \gamma(s,(\varrho\circ\varphi_{\pi})\otimes\varphi_\tau,\psi),
\]
for~$\varrho$ an algebraic representation of~$\vG(\BC)$ and~$\tau\in\Irrgen(\GL_r(F))$, though of course one would hope to have a direct definition of these~$\gamma$-factors on the automorphic side which matches with this.

\begin{lcttheorem}
Let $\G$ be a split connected reductive group over~$F$ such that the 
dual group~$\vG(\BC)$ is generically~$\WD_F$-acceptable, and for which there is a suitable local Langlands correspondence. Let~$\varrho_i$, for~$i\in I$, be generators of $\mathrm{R}[\vG]$ (which can be taken to be finite in number), and put~$K=\max_{i\in I}{\dim(\varrho_i)}$. Suppose that $\pi_1, \pi_2$ are irreducible generic tempered representations of $\G(F)$ such that
\[
\gamma(s, \pi_1\rtimes \tau,\varrho_i, \psi) = \gamma(s, \pi_2\rtimes \tau,\varrho_i, \psi)
\]
for all~$i\in I$ and all~$\tau\in\Irrgen(\GL_r(F))$ with~$r\le \left\lfloor\frac{K}{2}\right\rfloor$. Then $\pi_1 \cong \pi_2$.
\end{lcttheorem}

Note that this version of the~\lcttheoremtitle\ was proven in \cite{M24} in the case that $\vG$ is simply connected; in that case, the fundamental representations generate $\mathrm{R}[\vG]$.

Thus, in the situation of this~\lcttheoremtitle, all these twisted~$\gamma$-factors together provide an answer to~(Q), but this is almost certainly suboptimal: indeed, in the case of~$\GL_N(F)$ one does not need any of the fundamental representations other than the standard one, though one could also wonder if we could choose to cut down in a different way (see later in the introduction). This paper, then, is about the possible suboptimality of this~\lcttheoremtitle.  

Since most of our results concern groups whose dual groups have a standard representation~$\rhostd$ (faithful of smallest dimension), it is convenient to make the following definition.

\begin{defn}
Let $\G$ be a split connected reductive group over~$F$, whose dual group has a standard representation~$\rhostd$. Let~$\pi_1,\pi_2$ be irreducible generic representations of~$\G(F)$ and let~$m\ge 1$ be an integer. We say that~$\pi_1,\pi_2$ are \emph{$\gamma$-equivalent to level~$m$} if
\[
\gamma(s, \pi_1 \rtimes \tau, \rhostd, \psi) = \gamma(s, \pi_2 \rtimes \tau, \rhostd, \psi)
\]
for all irreducible supercuspidal representations $\tau$ of $\GL_r(F)$, with $1\le r\le m$. We use the same terminology for Langlands parameters also.
\end{defn}

Note that, by multiplicativity of~$\gamma$-factors, if~$\pi_1,\pi_2$ are $\gamma$-equivalent to level~$m$ then we in fact have~$\gamma(s, \pi_1 \rtimes \tau, \rhostd, \psi) = \gamma(s, \pi_2 \rtimes \tau, \rhostd, \psi)$ for all \emph{generic} irreducible representations~$\tau$ of $\GL_r(F)$, with $1\le r\le m$.

\subsection{The classical groups}

From now on, we assume that the residual characteristic of~$F$ is odd. Let $G_N$ be the group of points of a split classical group of rank $N$ -- i.e.~$\Sp_{2N}(F),\SO_{2N}(F)$, or~$\SO_{2N+1}(F)$.  The main theorem that has already been proven in the literature, which we will refer to as the \emph{the standard local converse theorem}, says the following, where we say that irreducible representations~$\pi_1,\pi_2$ of~$G_N$ are \emph{outer conjugate} if either they are conjugate (when~$G_N$ is~$\Sp_{2N}(F)$ or~$\SO_{2N+1}(F)$) or there is an element~$g\in\OO_{2N}(F)$ such that~${}^g\pi_1\cong\pi_2$ (when~$G_N=\SO_{2N}(F)$).

\begin{classicallcttheorem}[see \cite{JS03, Z18, HZ23}]
Suppose that $\pi_1, \pi_2$ are irreducible generic representations of~$G_N$, with the same central character, which are~$\gamma$-equivalent to level~$N$. Then $\pi_1$ and $\pi_2$ are outer conjugate.
\end{classicallcttheorem}

Thus, in the setting of the general~\lcttheoremtitle, this says that we \emph{only} need to consider the standard representation of the dual group.  Our first result is that we may remove the condition on the central characters of $\pi_1, \pi_2$ (see Theorem~\ref{uniformproof}).  Our subsequent focus is on the possible suboptimality of the~\classicallcttheoremtitle\ (namely, can we reduce the number of~$\GL$ twists while considering only $\varrho = \rhostd$), although in principle one could also consider generators of $R[\vG]$ other than the standard representation.  We first prove the following theorem, which shows that the~\classicallcttheoremtitle\ is optimal in the tame setting, for generic, tempered, irreducible representations of $\G_N$.

\begin{thm}[see Corollary \ref{cor:noncusp}]
Suppose~$p>N$. Then there are generic irreducible tempered (non-cuspidal) representations~$\pi,\pi'$ of~$G_N$, which are~$\gamma$-equivalent to level~$N-1$ but not outer conjugate.
\end{thm}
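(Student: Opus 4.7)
The plan is to produce $\pi,\pi'$ as parabolic inductions from a proper maximal parabolic subgroup $P \subset G_N$ with Levi $L \cong \GL_n(F)\times G_{N-n}(F)$, reducing the problem to a question about $\GL$-representations to which the sharpness results of \cite{ALST18,A22} apply. Fix a generic irreducible supercuspidal representation $\pi_0$ of $G_{N-n}(F)$ (trivial if $n=N$), and for two supercuspidal representations $\sigma_1,\sigma_2$ of $\GL_n(F)$, set $\pi_j = \Ind_P^{G_N}(\sigma_j\otimes\pi_0)$. Under standard genericity and unitarity hypotheses these induced representations are irreducible, generic, tempered, and non-cuspidal since $n\ge 1$.

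Next I would transfer the $\gamma$-equivalence condition to the $\GL$-factor. The $\GL$-lift of $\pi_j$ is the isobaric sum $\sigma_j\boxplus\Pi_0\boxplus\sigma_j^\vee$, where $\Pi_0$ is the $\GL$-lift of $\pi_0$, so by multiplicativity of $\gamma$-factors,
\[
\gamma(s,\pi_j\rtimes\tau,\rhostd,\psi) \;=\; \gamma(s,\sigma_j\times\tau,\psi)\,\gamma(s,\pi_0\rtimes\tau,\rhostd,\psi)\,\gamma(s,\sigma_j^\vee\times\tau,\psi).
\]
The middle factor is independent of $j$, so $\gamma$-equivalence of $\pi_1,\pi_2$ to level $N-1$ is equivalent to $\sigma_1\boxplus\sigma_1^\vee$ and $\sigma_2\boxplus\sigma_2^\vee$ being $\gamma$-equivalent as $\GL_{2n}(F)$-representations to the same level. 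Similarly, $\pi_1$ and $\pi_2$ fail to be outer conjugate iff $\{\sigma_1,\sigma_1^\vee\}\ne\{\sigma_2,\sigma_2^\vee\}$ as multisets (with a small amendment in the $\SO_{2N}$ case to accommodate the $\OO_{2N}$-action on the inducing datum).

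The natural choice is $n=N$ (Siegel parabolic, $\pi_0$ trivial), since then $\sigma_j\boxplus\sigma_j^\vee$ lives on $\GL_{2N}(F)$, whose Local Converse Theorem is sharp precisely at level $\lfloor 2N/2\rfloor-1=N-1$. The sharpness results of \cite{ALST18,A22}, available for $p>N$ via tame supercuspidal theory, provide the raw pool of counterexamples on $\GL_{2N}$. The main obstacle is that those examples produce pairs of supercuspidals in general position, whereas our reduction demands pairs of the structured form $\sigma\boxplus\sigma^\vee$; one must therefore refine the construction to the conjugate-self-dual setting on $\GL_N(F)$, exhibiting $\sigma_1,\sigma_2$ whose isobaric sums $\sigma_j\boxplus\sigma_j^\vee$ satisfy the product identity on twisted $\gamma$-factors at level $N-1$ while the multisets $\{\sigma_j,\sigma_j^\vee\}$ remain distinct. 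The hypothesis $p>N$ is what ultimately controls the requisite flexibility in this tame construction, and packaging everything via the above parabolic induction then yields the desired non-cuspidal $\pi,\pi'$ on $G_N$.
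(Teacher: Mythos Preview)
Your overall strategy matches the paper's: use the Siegel parabolic ($n=N$, $\pi_0$ trivial), so that $\gamma$-equivalence to level~$N-1$ for $\pi_1,\pi_2$ reduces to the product identity
\[
\gamma(s,\sigma_1\times\tau,\psi)\,\gamma(s,\sigma_1^\vee\times\tau,\psi)
=\gamma(s,\sigma_2\times\tau,\psi)\,\gamma(s,\sigma_2^\vee\times\tau,\psi)
\]
for all irreducible~$\tau$ of dimension~$\le N-1$, while~$\{\sigma_1,\sigma_1^\vee\}\ne\{\sigma_2,\sigma_2^\vee\}$. The paper carries this out on the Galois side (parameters through the Siegel Levi embedding, then LLC), which sidesteps the irreducibility/genericity bookkeeping you allude to, but the content is the same.

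The genuine gap is in your final paragraph. You correctly identify the ``main obstacle'': the sharpness examples of~\cite{ALST18,A22} for~$\GL_{2N}$ are \emph{irreducible} $2N$-dimensional supercuspidals, not of the form~$\sigma\boxplus\sigma^\vee$, so they do not apply directly. You then say one must ``refine the construction'' without doing so. This is not a minor omission. Note that you \emph{cannot} simply take~$\sigma_1,\sigma_2$ on~$\GL_N$ that are individually $\gamma$-equivalent to level~$N-1$: by the Local Converse Theorem for~$\GL_N$, level~$\lfloor N/2\rfloor$ already forces~$\sigma_1\cong\sigma_2$, so for~$N\ge 2$ no such distinct pair exists. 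What is needed is a genuine \emph{cancellation} between the~$\sigma$ and~$\sigma^\vee$ factors in the product, and this is precisely what must be proved.

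The paper resolves this as follows. Take~$\sigma_j=\rho_{\chi_j}$ with~$(E/F,\chi_j)$ minimal totally ramified admissible pairs of degree~$N$ such that~$\chi_1,\chi_2$ agree on~$U_E$ (so differ only on a uniformizer). Then one applies Proposition~\ref{prop:basic2} with index set~$I=\{+,-\}$ to the pair~$(\chi_1,\chi_1^{-1})$ versus~$(\chi_2,\chi_2^{-1})$. Condition~(i) is immediate; condition~(iii) is trivial since~$\chi_j\chi_j^{-1}=\boI$; and condition~(ii), which reads~$\chi_1(\beta)\chi_1^{-1}(-\beta)=\chi_2(\beta)\chi_2^{-1}(-\beta)$, collapses to~$\chi_1(-1)=\chi_2(-1)$, automatic since~$-1\in U_E$. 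It is exactly this collapse---the dual structure making the hypotheses of Proposition~\ref{prop:basic2} essentially vacuous---that produces the cancellation you need; citing~\cite{ALST18,A22} alone does not give it.
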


However, the situation changes if one assumes that $\pi_1, \pi_2$ are also supercuspidal.  We first state the main results in the setting of $\SO_{2N}$ since we have a clean statement there, and then we discuss the results for a general classical group. 

\begin{thm}[Theorem \ref{thm:SObetter} and Corollary~\ref{cor:cusp}]\label{thm:introSObetter}
Suppose~$p>N$.
\begin{enumerate}
\item If $N$ is even then the standard local converse theorem for $\SO_{2N}$ is optimal.
\item If~$N$ is odd and~$\pi_1,\pi_2$ are irreducible generic supercuspidal representations of~$\SO_{2N}(F)$ which are~$\gamma$-equivalent to level~$N-1$, then~$\pi_1, \pi_2$ are outer conjugate. Moreover, this new bound is optimal.
\end{enumerate}
\end{thm}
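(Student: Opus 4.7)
The plan is to transfer everything to the Galois side via the local Langlands correspondence for $\SO_{2N}(F)$.  For a generic irreducible supercuspidal $\pi$ of $\SO_{2N}(F)$, its parameter $\varphi_\pi\colon W_F \to \SO_{2N}(\BC)$ is discrete with trivial monodromy, so $\Phi_\pi := \rhostd\circ\varphi_\pi$ is a $2N$-dimensional self-dual orthogonal representation of $W_F$; by Arthur's classification, $\Phi_\pi$ decomposes as a direct sum of \emph{pairwise distinct} self-dual orthogonal irreducibles (no dual pairs $\tau\oplus\tau^\vee$ with $\tau$ not self-dual of orthogonal type can occur in a discrete $\SO_{2N}$-parameter).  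Outer conjugacy of $\pi_1,\pi_2$ is equivalent to an isomorphism $\Phi_{\pi_1}\cong\Phi_{\pi_2}$ as $W_F$-representations (i.e.\ to $\OO_{2N}(\BC)$-conjugacy of the parameters), and the $\gamma$-equivalence hypothesis translates, via compatibility of the classical $\gamma$-factors with Rankin--Selberg ones, to the equality $\gamma(s,\Phi_1\otimes\tau,\psi)=\gamma(s,\Phi_2\otimes\tau,\psi)$ for every irreducible $\tau$ of $\GL_r(F)$ with $1\le r\le N-1$.

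For the positive direction of~(2), I would proceed in three steps.  First, cancel any common irreducible summand of $\Phi_1$ and $\Phi_2$ using multiplicativity of $\gamma$-factors.  Second, carry out a pole analysis of $\gamma(s,\Phi_i\otimes\sigma^\vee,\psi)$ around $s=1$ to show that any irreducible summand $\sigma$ of $\Phi_1$ with $\dim\sigma\le N-1$ must also appear in $\Phi_2$ -- otherwise the pole orders on the two sides could not agree -- and hence is common.  After these reductions one is left with the case where $\Phi_1,\Phi_2$ share no irreducible constituent and every constituent has dimension at least $N$; the total dimension $2N$ then forces each reduced $\Phi_i$ to be either irreducible of dimension $2N$ or a sum $A_i\oplus B_i$ of two self-dual orthogonal irreducibles of dimension $N$.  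Third, I would invoke the parity of $N$: when $N$ is odd, any irreducible self-dual representation of $W_F$ of dimension $N$ is automatically of orthogonal type (an odd-dimensional symplectic form does not exist), and this rigidity, together with the Local Converse Theorem for $\GL_N$ (which requires only level $\lfloor N/2\rfloor<N-1$), is enough to force the multisets $\{A_1,B_1\}$ and $\{A_2,B_2\}$ to coincide and to rule out inequivalence in the irreducible-of-dimension-$2N$ case, giving $\Phi_1\cong\Phi_2$.

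For the sharpness parts in~(1) and the ``moreover'' of~(2), I would produce explicit tame counterexamples: pairs of generic supercuspidals $\pi_1,\pi_2$ that are $\gamma$-equivalent up to the claimed critical level but are not outer conjugate.  Following the strategy of~\cite{ALST18,A22}, the hypothesis $p>N$ guarantees the existence of low positive-depth supercuspidals induced from characters of tamely ramified anisotropic tori; arranging these tori to preserve a symmetric bilinear form places the parameters into $\SO_{2N}(\BC)$, and choosing characters that differ only at ``high frequency'' gives parameters agreeing on all low-level twists.  For~(1), with $N$ even, the obstruction is placed in a pair of inequivalent self-dual orthogonal irreducibles of dimension exactly $N$, so that equality of level-$(N-1)$ gamma factors persists while equality at level $N$ fails; this works precisely because $N$ is even, as the analogous construction when $N$ is odd is blocked by the rigidity exploited in the positive direction.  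For the ``moreover'' in~(2), the obstruction is shifted down one level, placed for instance in inequivalent self-dual orthogonal irreducibles of dimension $N+1$ balanced by a common summand of dimension $N-1$, so that the difference is detectable only by a level-$(N-1)$ twist and leaves level-$(N-2)$ $\gamma$-factors unchanged.

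The main obstacle will be the case analysis in the positive direction, and specifically the sub-case where each $\Phi_i$ decomposes as a sum of two irreducibles of dimension $N$.  There one must extract, from the single product equality $\gamma(s,A_1\otimes\tau,\psi)\,\gamma(s,B_1\otimes\tau,\psi)=\gamma(s,A_2\otimes\tau,\psi)\,\gamma(s,B_2\otimes\tau,\psi)$, valid for all $\tau$ of dimension $\le N-1$, the conclusion that the multisets $\{A_1,B_1\}$ and $\{A_2,B_2\}$ coincide.  This is exactly the point where the parity of $N$ intervenes: when $N$ is odd, the forced orthogonal type of every dimension-$N$ self-dual summand supplies just enough rigidity (via the $\GL_N$ Local Converse Theorem applied after extracting one factor from the product), while when $N$ is even an orthogonal-type summand can be swapped for an inequivalent orthogonal-type summand in a way that preserves all level-$(N-1)$ $\gamma$-factor data -- and this is precisely what drives the counterexamples of~(1).
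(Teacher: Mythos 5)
Your overall plan---pass to the Galois side, cancel common constituents by multiplicativity, and run a pole analysis to match small summands---is the same skeleton as the paper's Lemma~\ref{lem:nonmaxbetter}. But the two key ideas that actually close the argument are missing, and your treatment of the remaining cases would not work.

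First, after the pole analysis you arrive at a dichotomy ``irreducible of dimension $2N$, or a sum $A_i\oplus B_i$ of two self-dual orthogonal irreducibles of dimension $N$.'' When $N\ge 3$ is odd and $p\neq 2$, the second branch is \emph{vacuous}: the only odd-dimensional self-dual irreducible representations of $W_F$ are quadratic characters, so there are no self-dual irreducibles of dimension $N$ at all. Your plan for that branch (``rigidity plus the LCT for $\GL_N$'') is therefore aimed at a case that doesn't exist, and---more seriously---you give no real argument for the irreducible-dimension-$2N$ case. Invoking the $\GL_N$ converse theorem there is a type mismatch; what you would need is the converse theorem for $\GL_{2N}$ at level $N-1$, which is exactly the \emph{unproven} Conjecture~\ref{conj:better}. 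The paper instead disposes of this case with a determinant obstruction: by Corollary~\ref{cor:oddO} (which rests on the computation of $\varkappa_{E/F}$ in Lemma~\ref{lem:BF}), an irreducible orthogonal representation of $W_F$ of dimension $2N$ has nontrivial determinant when $N$ is odd, so it cannot be the standard composite of an $\SO_{2N}$-parameter. That determinant argument is the crux of Theorem~\ref{thm:SObetter}, and your proposal does not contain it.

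Second, your sharpness constructions would not go through as described. The mechanism in the paper (Theorem~\ref{thm:cusp} via Proposition~\ref{prop:basic2}) is not ``a single component that oscillates at high frequency'' -- a single supercuspidal component $A$ of dimension $m$ can only be $\gamma$-equivalent to a non-isomorphic $A'$ up to level $\lfloor m/2\rfloor$, by the $\GL_m$ converse theorem, which is far short of the level $M-1$ needed. The point is that \emph{two} components are changed simultaneously (each by the sign on a uniformizer) so that the \emph{product} of the individual $\gamma$-factors is unchanged even though each factor moves. Concretely, for $N$ odd your proposed obstruction ``a dimension-$(N+1)$ pair balanced by a common dimension-$(N-1)$ summand'' fails for $N\ge 5$: twisting by representations of dimension up to $N-2\ge (N+1)/2$ already determines a $\GL_{N+1}$ supercuspidal, so the level-$(N-2)$ $\gamma$-equivalence would force $A_1\cong A_2$ and collapse the example. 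The paper instead uses two minimal totally ramified components of dimension $M=2\lfloor N/2\rfloor$ (plus, for $N$ odd, a fixed $2$-dimensional unramified piece), with the equality of $\gamma$-factors following from the cancellation in Proposition~\ref{prop:basic2}\ref{prop:basic2.ii}.
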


We are able to prove the same results for the other classical groups \emph{conditional on a conjecture} which we now explain. First, we recall that a discrete Langlands parameter for a classical group is a multiplicity-free direct sum of twists of irreducible self-dual representations of $W_F$ by irreducible representations of~$\SL_2(\BC)$ (see the proof of Lemma \ref{lem:nonmaxbetter} for a more precise statement).  It therefore becomes natural to understand if there is any possible improvement in the standard local converse theorem for~$\GL_N$ when one restricts to the setting of self-dual supercuspidal representations.  While the standard local converse theorem for $\GL_N$ asserts that the bound $[N/2]$ is optimal for general supercuspidal representations of $\GL_N$, we make the following conjecture.

\begin{conj}(see Conjecture \ref{conj:better})\label{conj:betterintro}
Suppose~$p>N$ and~$N$ is odd. Let~$\pi,\pi'$ be self-dual supercuspidal representations of~$\GL_{2N}(F)$ of the same parity which are~$\gamma$-equivalent to level~$N-1$.
Then~$\pi\cong\pi'$.
\end{conj}

We have no evidence as to whether the assumptions on $p$ or $N$ are needed, or even whether the above bound is optimal; at best, it can be lowered by $1$ more, as the following shows:

\begin{thm}(see Theorem \ref{thm:better})
Suppose~$p>N$, set~$M=2\lfloor\frac N2\rfloor$, and fix a parity. There exist inequivalent self-dual supercuspidal representations~$\pi,\pi'$ of~$\GL_{2N}(F)$ of this parity which are~$\gamma$-equivalent to level~$M-2$.
\end{thm}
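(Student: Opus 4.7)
The plan is to pass to the Galois side via the local Langlands correspondence for~$\GL_{2N}(F)$: self-dual supercuspidal representations of~$\GL_{2N}(F)$ of prescribed parity correspond bijectively to irreducible~$2N$-dimensional self-dual representations~$\varphi:W_F\to\GL_{2N}(\BC)$ of the matching parity, and Rankin--Selberg~$\gamma$-factors transfer to Artin~$\gamma$-factors on parameters. It therefore suffices to construct two such parameters~$\varphi,\varphi'$ that are not~$\GL_{2N}(\BC)$-equivalent but satisfy~$\varphi\otimes\varphi_\tau\cong\varphi'\otimes\varphi_\tau$ for every irreducible~$\varphi_\tau:W_F\to\GL_r(\BC)$ with~$r\le M-2$.

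Exploiting the tameness afforded by~$p>N$, I would realise both parameters as tame inductions:~$\varphi=\Ind_{W_E}^{W_F}\chi$ and~$\varphi'=\Ind_{W_E}^{W_F}(\chi\cdot\eta)$, where~$E/F$ is a tamely ramified degree-$2N$ Galois extension whose Galois group contains an involution~$\iota$,~$\chi$ is a sufficiently regular character of~$W_E$ with~$\chi\cdot(\chi\circ\iota)=1$ (so that~$\varphi$ is self-dual, of a parity pinned down by a Frobenius--Schur-type invariant of~$\chi$), and~$\eta$ is a deep anti-$\iota$-invariant character of~$W_E$, chosen to preserve the parity invariant and to leave~$\varphi'$ irreducible and~$\GL_{2N}(\BC)$-inequivalent to~$\varphi$.

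The $\gamma$-equivalence is verified using the projection formula
\[
\Ind_{W_E}^{W_F}(\chi)\otimes\varphi_\tau \;\cong\; \Ind_{W_E}^{W_F}\bigl(\chi\cdot\varphi_\tau|_{W_E}\bigr)
\]
together with the inductivity of local Artin~$\gamma$-factors; this reduces the desired equality of~$\gamma$-factors to a comparison over~$E$ of~$\gamma$-factors involving~$\eta$ and the irreducible constituents of~$\varphi_\tau|_{W_E}$. The anti-$\iota$-invariance of~$\eta$ groups these constituents into~$\iota$-orbits of size~$2$, and a dimension count then shows that for any irreducible~$\varphi_\tau$ with~$\dim\varphi_\tau\le M-2$ there are too few complete~$\iota$-orbits among the constituents of~$\varphi_\tau|_{W_E}$ to detect the perturbation, forcing the~$\gamma$-factors to agree. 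Inequivalence of~$\varphi$ and~$\varphi'$ is then confirmed by exhibiting a single twist at a slightly larger dimension where~$\eta$ becomes detectable.

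The hardest part will be the simultaneous reconciliation of the constraints on~$\eta$: anti-$\iota$-invariance (for self-duality), membership in a particular coset (to preserve parity), sufficient depth (to escape detection by low-dimensional twists), and enough shallowness or regularity (so that~$\chi\cdot\eta$ remains regular and~$\Ind\chi'$ irreducible and distinct from~$\varphi$). The appearance of~$M=2\lfloor N/2\rfloor$ in place of~$N$ is then explained by the~$\iota$-orbit structure: detection of the anti-invariant~$\eta$ requires a complete~$\iota$-orbit of constituents of~$\varphi_\tau|_{W_E}$, so the minimal dimension of a detecting twist is~$M-1$, and counterexamples can be produced at level exactly~$M-2$.
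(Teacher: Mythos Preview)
Your overall shape is the paper's: pass to the Galois side, realise both parameters as inductions~$\rho_\chi,\rho_{\chi'}$ from a degree-$2N$ extension~$E/F$ carrying an involution, and perturb the inducing character in a way compatible with self-duality of the prescribed parity. But there are two genuine gaps.

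First, a small one: you write that it suffices to arrange~$\varphi\otimes\varphi_\tau\cong\varphi'\otimes\varphi_\tau$ for all~$\dim\varphi_\tau\le M-2$. Taking~$\varphi_\tau=\boI$ this would force~$\varphi\cong\varphi'$, so no such pair can exist. You only need (and later only claim) equality of~$\gamma$-factors, which is strictly weaker; but the slip signals that the actual mechanism is not yet in hand.

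The main gap is your justification of the level~$M-2$. The claim that ``detection of the anti-$\iota$-invariant~$\eta$ requires a complete $\iota$-orbit of constituents of~$\varphi_\tau|_{W_E}$, so the minimal detecting dimension is~$M-1$'' is not substantiated, and I do not see how any orbit-count produces~$M=2\lfloor N/2\rfloor$. Restricting~$\varphi_\tau$ to~$W_E$ does not change its dimension, and the $\iota$-orbit structure on its constituents depends on how~$\varphi_\tau$ sits relative to the subfield lattice of~$E$; nothing here singles out~$M-1$. (Also, you assume~$E/F$ Galois, which for a totally ramified degree-$2N$ extension would impose~$2N\mid q-1$; the paper does not need this.)

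In the paper the bound arises by a quite different, arithmetic mechanism. One takes~$E/F$ \emph{totally ramified} of degree~$2N$ with involution~$\sigma$ over the index-$2$ subfield~$L$, and chooses the depth~$d=m/(2N)$ of the inducing character so that the least positive integer~$r$ with~$rm\equiv\pm1\pmod{2N}$ is exactly~$M-1$. By a valuation computation (the function~$t_\beta(r)$ of Lemma~\ref{lem:tbeta}) this forces~$t_\beta(M-1)>1/(2N)$. One then uses the Glauberman correspondence (Lemma~\ref{lem:exist}) to produce \emph{distinct} self-dual characters~$\chi,\chi'$ of the required parity, represented by the same~$\beta$ with~$\sigma(\beta)=-\beta$, coinciding on~$U_E^{(1/2N)+}$ and with~$\chi(\beta)=\chi'(\beta)$. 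These feed directly into Proposition~\ref{prop:basic2}, which gives the equality of~$\gamma$-factors for all twists of dimension~$<M-1$. Thus the appearance of~$M$ rather than~$N$ is an artefact of congruences modulo~$2N$ and valuations of monomials in~$\beta$, not of $\iota$-orbits in restricted representations.
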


Assuming Conjecture \ref{conj:betterintro}, we also have the analogue of Theorem~\ref{thm:introSObetter} for the symplectic and odd special orthogonal groups. We refer to Theorem \ref{thm:Gbetter} and Corollary \ref{cor:cusp} for the statement.

%

We end this section by considering the question of acceptability. In Example~\ref{so6}, we show that $\SO_6$ is generically $WD_F$-unacceptable when $q \equiv 3\pmod{4}$.  To be more specific, the representation ring~$R[\SO_6(\BC)]$ is generated by~$\Wedge^1,\Wedge^2,\Wedge^3_+,\Wedge^3_-$, where~$\Wedge^3=\Wedge^3_+\oplus\Wedge^3_-$, and we give an example of inequivalent generic (supercuspidal) parameters $\varphi_1, \varphi_2 : W_F \rightarrow \SO_6(\mathbb{C})$ which are conjugate under~$\OO_6(\BC)$ (so that their compositions with~$\Wedge^r$ are equivalent, for any~$r$) and, moreover~$\Wedge^3_+\circ\varphi_1\cong\Wedge^3_-\circ\varphi_2$. In particular, this means that all their twisted~$\gamma$-factors will be equal, whatever representation of the dual group we take. As we comment in Remark~\ref{rmk:SO2odd}, the same method could be used to give discrete parameter counterexamples for~$\SO_{2N}$ whenever~$N\ge 3$ is odd.

Moreover, we are able to leverage this example to show that~$\SO_{2N}(\mathbb{C})$ is generically $\WD_F$-unacceptable when $q \equiv 3 \pmod{4}$ (see~Corollary~\ref{cor:SO2N} ); this complements Matringe's result~\cite[Proposition~7.3]{M24} that~$\SO_{2N}(\mathbb{C})$ is generically $\WD_F$-unacceptable when $q \equiv 1 \pmod{4}$. Note, however, that the parameters demonstrating these results are \emph{not} discrete.

\subsection{The exceptional group $\G_2$}

We also investigate the exceptional split group of type $\G_2$, when~$p>3$.  In this case there are two fundamental representations of $\G_2(\mathbb{C})$: the standard $7$-dimensional, and the adjoint, which is~$14$-dimensional.  We begin by proving an expected result, the \emph{standard local converse theorem for $\G_2$}, which says that the family of $\GL_i$-twisted gamma factors, $i = 1, 2, 3$, using only the standard representation of the dual group, determines an irreducible generic representation of $\G_2$.  We then prove that one cannot lower this bound of twisting to $i = 1,2$, and that even if we include the adjoint representation of the dual group, we still do not obtain a local converse theorem.  To summarize, we prove the following theorem (see Theorem \ref{thm:converseG2}).

\begin{thm} Suppose~$p>3$.
\begin{enumerate}
\item Let $\varphi_1, \varphi_2$ be two generic Langlands parameters for $\G_2(F)$.  If~$\varphi_1,\varphi_2$ are $\gamma$-equivalent to level~$3$, then $\varphi_1 \cong \varphi_2$. 
\item There are inequivalent generic supercuspidal representations~$\pi_1,\pi_2$ of~$\G_2(F)$ such that
\[
\gamma(s,\pi_1\rtimes\tau,\varrho,\psi) := \gamma(s,\pi_2\rtimes\tau,\varrho,\psi) ,
\]
for~$\varrho$ either fundamental representation of~$\G_2(\BC)$, and all supercuspidal representations of~$\GL_r(F)$, with~$1\le r\le 2$; in particular, they are~$\gamma$-equivalent to level~$2$.
\end{enumerate}
\end{thm}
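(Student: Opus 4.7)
The plan is to prove Part~(1) by reduction to the Local Converse Theorem for~$\GL_7$ combined with the rigidity of the standard embedding $\G_2(\BC)\hookrightarrow\SO_7(\BC)\subset\GL_7(\BC)$, and to establish Part~(2) by constructing inequivalent tame regular supercuspidal parameters for $\G_2(F)$ on the Galois side and transferring them via the tame local Langlands correspondence for $\G_2$, which is available when $p>3$.

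For Part~(1), let $\varphi_1,\varphi_2:\WD_F\to\G_2(\BC)$ be generic parameters that are $\gamma$-equivalent to level~$3$, and set $\Phi_i:=\rhostd\circ\varphi_i$. Then $\Phi_1,\Phi_2$ are $7$-dimensional representations of $\WD_F$, i.e.\ parameters for $\GL_7(F)$, and by multiplicativity of $\gamma$-factors the hypothesis gives
\[
\gamma(s,\Phi_1\otimes\varphi_\tau,\psi) = \gamma(s,\Phi_2\otimes\varphi_\tau,\psi)
\]
for every $\tau\in\Irrgen(\GL_r(F))$ with $r\le 3=\lfloor 7/2\rfloor$. The Local Converse Theorem for $\GL_7$ then yields $\Phi_1\cong\Phi_2$. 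To upgrade this $\GL_7(\BC)$-conjugacy to a $\G_2(\BC)$-conjugacy of the $\varphi_i$, I would use that $\G_2$ is self-normalizing in $\SO_7$ and admits no nontrivial outer automorphism: any $g\in\GL_7(\BC)$ intertwining $\varphi_1$ and $\varphi_2$ can, after a central adjustment, be taken in $\OO_7(\BC)$, and the genericity of the $\varphi_i$ then forces $g\in\G_2(\BC)$.

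For Part~(2), note first that by Part~(1) the standard compositions $\Phi_i:=\rhostd\circ\varphi_i$ of the sought parameters must be inequivalent as $\GL_7$-parameters, while simultaneously being $\gamma$-equivalent to level~$2$; an analogous requirement holds for the adjoint. I would take $\varphi_i$ factoring through a common unramified elliptic maximal torus $T\subset\G_2(\BC)$ via characters $\chi_1,\chi_2:W_F\to T(\BC)$ in distinct $W(\G_2)$-orbits (ensuring inequivalence of the~$\varphi_i$). The weights of $\rhostd$ on $T$ are the zero weight together with the six short roots, while those of $\Ad$ are the zero weight with multiplicity two together with all twelve roots; hence $\Phi_i$ and $\Ad\circ\varphi_i$ decompose as sums of characters built from $\chi_i$ along the inertia-orbits of these weights. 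The strategy is to choose $\chi_1,\chi_2$ so that the decompositions of $\Phi_1,\Phi_2$ (and of $\Ad\circ\varphi_1,\Ad\circ\varphi_2$) differ only in a pair of self-dual $2$-dimensional summands that are $\gamma$-equivalent to level~$1$ but not isomorphic, an instance of the sharpness of the Local Converse Theorem for $\GL_2$ in the self-dual tame setting. Level-$2$ equivalence is then automatic for both fundamental representations, and the tame local Langlands correspondence for $\G_2(F)$ produces the required pair $\pi_1,\pi_2$ of inequivalent generic supercuspidal representations.

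The principal obstacle is the explicit combinatorial construction in Part~(2): producing tame characters $\chi_1,\chi_2$ of the torus $T$ in distinct $W(\G_2)$-orbits whose weight-induced decompositions under both $\rhostd$ and $\Ad$ differ only by the permitted level-$2$ corrections, while the resulting parameters remain regular (hence supercuspidal) and generic. This is parallel in spirit to the sharpness examples used in Theorem~\ref{thm:better} for $\GL_{2N}$, but must be executed inside the smaller Weyl group $W(\G_2)$ and under the additional constraint that the weight multisets of both fundamental representations line up simultaneously. By contrast, Part~(1) is essentially formal given the Local Converse Theorem for $\GL_7$ and the rigidity of $\G_2\subset\SO_7$.
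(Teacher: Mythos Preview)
Your Part~(1) is essentially the paper's argument: the Local Converse Theorem for~$\GL_7$ gives~$\rhostd\circ\varphi_1\cong\rhostd\circ\varphi_2$, and one then upgrades to~$\G_2(\BC)$-conjugacy. One small correction: genericity plays no role in that upgrade. What is actually used (via~\cite[Theorem~8.2(b)]{M24}) is that the normalizer of~$\rhostd(\G_2(\BC))$ in~$\GL_7(\BC)$ is~$\BC^\times\cdot\rhostd(\G_2(\BC))$, so~$\GL_7$-conjugacy of the compositions automatically gives~$\G_2(\BC)$-conjugacy of the parameters.

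Your Part~(2), however, has a genuine gap at its core. First, a parameter~$\varphi:W_F\to T(\BC)$ with~$T$ a maximal torus of~$\G_2(\BC)$ lands in the Levi of a Borel and corresponds to a principal series, not a supercuspidal; even in the tame regular supercuspidal picture only~$\varphi|_{I_F}$ factors through a torus, with Frobenius acting by a nontrivial Weyl element. More seriously, your mechanism cannot work: you propose that~$\Phi_1,\Phi_2$ differ only in a pair of~$2$-dimensional irreducible summands~$\sigma_1\not\cong\sigma_2$ that are ``$\gamma$-equivalent to level~$1$''. But the Local Converse Theorem for~$\GL_2$ says precisely that level-$1$ $\gamma$-equivalence of generic~$2$-dimensional parameters forces isomorphism, so no such~$\sigma_1,\sigma_2$ exist. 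And even if they did, level-$1$ equivalence of the~$2$-dimensional pieces would \emph{not} give level-$2$ equivalence of the~$\Phi_i$: twisting~$\Phi_i$ by~$\sigma_1^\vee$ (a~$\GL_2$ twist) would produce a pole for~$\Phi_1$ and not for~$\Phi_2$.

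The paper's construction avoids this by making the variable pieces~$3$-dimensional. It uses the embedding~$\iota:\SL_3(\BC)\hookrightarrow\G_2(\BC)$ and takes~$\varphi_i=\iota\circ\rho_i$ with~$\rho_i=\Ind_{E/F}\chi_i$ for~$E/F$ a \emph{totally ramified cubic} extension and~$\chi_1,\chi_2$ minimal admissible characters agreeing on~$U_E$ and on~$F^\times$ (where they equal~$\varkappa_{E/F}$) but differing on a uniformizer. Then~$\rhostd\circ\varphi_i=\rho_i\oplus\boI\oplus\rho_i^\vee$, and Proposition~\ref{prop:basic2} (the engine behind Theorem~\ref{thm:noncusp} with~$N=3$) gives~$\gamma$-equivalence to level~$2$. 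For the adjoint, the paper observes~$\Wedge^2\circ\rhostd\cong\rhostd\oplus\Ad$ and computes directly that~$\rho_1\otimes\rho_1^\vee\cong\rho_2\otimes\rho_2^\vee$ (not merely~$\gamma$-equivalent), so the adjoint~$\gamma$-factors match as well. The supercuspidality and genericity on the~$\G_2$ side come from the map~$\Phi_{\dss}(\PGL_3)\to\Phi_{\dss}(\G_2)$ of~\eqref{eqn:PGLtoG}, realized by an exceptional theta lift. The key point you are missing is that the irreducible pieces must have dimension at least~$3$ so that level-$2$ twists cannot separate them; your~$2$-dimensional scheme is one dimension too small.
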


\subsection{The general linear groups $\GL_N$}

Finally, we investigate whether one can improve upon the~\lcttheoremtitle\ for $\GL_N$ in ways other than by cutting out the non-standard fundamental representations.  In fact, in~\cite[Conjecture~1.2]{YZ22}, a hint was provided along these lines.  The authors state a conjecture, attributed to Ramakrishnan~\cite{Ram94}, that for unitarizable supercuspidal representations of~$\GL_N(F)$ one only needs to consider the~$\GL_1$-twisted gamma factors while allowing all fundamental representations.  Namely, the gamma factors
\[
\gamma(s, \pi\rtimes \eta,\Wedge^i, \psi),\text{ for~$i=1,\ldots,\left\lfloor\tfrac N2\right\rfloor$ and~$\eta$ a character of~$F^\times$}.
\]
(They also assume that the central character is fixed, though that is unnecessary since the central character is determined by the standard twisted~$\gamma$-factors~$\gamma(s, \pi\times \eta,\psi)$; see, for example,~\cite[Corollary~2.7]{JNS15}.) In Section~\ref{sec:ramakrishnan}, we prove that this conjecture is false in general:

\begin{thm} 
Suppose~$p$ is odd.  Then there are inequivalent irreducible parameters~$\varphi_1,\varphi_2:W_F\to\GL_4(\BC)$, corresponding to unitarizable supercuspidal representations of $\GL_4(F)$, such that
\[
\gamma(s, (\Wedge^i\circ\varphi_1)\otimes\eta, \psi) = \gamma(s, (\Wedge^i\circ\varphi_2)\otimes\eta, \psi),
\]
for all characters~$\eta$ of~$W_F$ and~$i=1,\ldots,4$.
\end{thm}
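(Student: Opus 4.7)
The plan is to reduce the statement to producing an irreducible $4$-dimensional representation~$V$ of $W_F$ with trivial determinant such that $V$ and $V^\vee$ are inequivalent but level-$1$ $\gamma$-equivalent, i.e.
\[
\gamma(s,V\otimes\eta,\psi)=\gamma(s,V^\vee\otimes\eta,\psi)
\qquad\text{for every character }\eta\text{ of }W_F.
\]
Given such a $V$, I take $\varphi_1=V$ and $\varphi_2=V^\vee$. Both are irreducible $4$-dimensional with trivial determinant, and $\varphi_1\not\cong\varphi_2$ by construction; they correspond to the desired supercuspidal (unitarisable) representations of~$\GL_4(F)$.

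I then verify the $\gamma$-factor identities for each~$i$. For $i=4$, both $\Wedge^4\varphi_j$ are trivial, so equality is immediate. For $i=2$, the exterior square satisfies $\Wedge^2 V^\vee\cong(\Wedge^2 V)^\vee$, and because $V$ lands in $\SL_4(\BC)$, $\Wedge^2 V$ factors through $\SO_6(\BC)$ via the exceptional isogeny $\SL_4\cong\Spin_6$, hence is orthogonally self-dual; so $\Wedge^2\varphi_1\cong\Wedge^2\varphi_2$ as $\GL_6$-representations and all character-twisted $\gamma$-factors trivially agree. For $i=3$, the identity $\Wedge^3\varphi_j=\varphi_j^\vee$ (using $\det\varphi_j=1$) together with the functional equation of $\gamma$-factors reduces this case to $i=1$. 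Finally the $i=1$ identity is precisely the hypothesised level-$1$ $\gamma$-equivalence of $V$ and $V^\vee$.

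The main obstacle is the existence of such a $V$: it must be \emph{weakly self-dual} in the sense above without being self-dual. The sharpness results of~\cite{ALST18,A22} show that the standard local converse theorem for~$\GL_4$ is optimal at level~$1$, producing inequivalent supercuspidal pairs of $\GL_4(F)$ with matching character-twisted $\gamma$-factors; the point is to realise such a pair of the form $\{V,V^\vee\}$. This is achieved by taking $V$ to be the $\SL_4$ half-spin lift of the $\SO_6$-parameter $\phi_2$ from Example~\ref{so6}: then $V^\vee$ is the other half-spin lift, and the fact that $\phi_1$ and $\phi_2$ are $\OO_6(\BC)$-conjugate but not $\SO_6(\BC)$-conjugate forces $V\not\cong V^\vee$. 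The required weak self-duality is checked using the explicit presentation of $V$ as an induced representation from a character of an auxiliary extension of $F$ (coming from the construction in Example~\ref{so6}): inductivity of local constants reduces the identity to an equality of $\varepsilon$-factors of one-dimensional twists, which follows from a direct Gauss-sum computation where the hypothesis that $p$ is odd enters to control the tame signs.
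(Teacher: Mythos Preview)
Your reduction is clean and correct: if~$V$ is an irreducible~$4$-dimensional representation of~$W_F$ with~$\det V=\boI$, then~$\Wedge^4 V$ and~$\Wedge^4 V^\vee$ are both trivial,~$\Wedge^2 V\cong(\Wedge^2 V)^\vee\cong\Wedge^2 V^\vee$ (via the wedge pairing into~$\Wedge^4 V$), and~$\Wedge^3 V\cong V^\vee$, so the whole theorem collapses to the single identity~$\gamma(s,V\otimes\eta,\psi)=\gamma(s,V^\vee\otimes\eta,\psi)$ for all~$\eta$. This is a genuinely different organising principle from the paper's, which instead takes~$\varphi_1=\rho_\chi$ and~$\varphi_2=\rho_{\chi'}$ with~$\chi'=\chi\cdot(\text{unramified quadratic})$, so that~$\rho_\chi\cong\rho_{\chi'}\otimes\eta_8$; there the~$\Wedge^2$ and~$\Wedge^4$ identities come from the twist relation rather than from self-duality, and the~$i=1,3$ cases are handled directly by Proposition~\ref{prop:basic2}. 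The paper's route has the advantage that the pair is written down explicitly from the start as induced representations, so every step is a concrete calculation.

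The gap in your argument is entirely in the construction of~$V$. Three concrete problems. First, Example~\ref{so6} is only available when~$q\equiv 3\pmod 4$, so even a complete version of your argument would not cover all odd~$p$ as the theorem requires. Second, you assert that the~$\SO_6$-parameter of Example~\ref{so6} lifts to~$\Spin_6\cong\SL_4$, but you give no argument that the obstruction in~$H^2(W_F,\mu_2)$ vanishes; there is no reason this should be automatic for a parameter built as~$\rho_{\chi_1}\oplus\rho_{\chi_2}\oplus\rho_{\chi_3}$. Third, and most seriously, you claim that~$V$ has ``an explicit presentation\ldots as an induced representation from a character of an auxiliary extension of~$F$ (coming from the construction in Example~\ref{so6})'' and that the~$\gamma$-identity then follows from a Gauss-sum computation---but Example~\ref{so6} contains no such presentation of any~$4$-dimensional irreducible, and you have not produced one either. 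Note also that applying Proposition~\ref{prop:basic2} directly to the pair~$(\chi,\chi^{-1})$ is obstructed: if~$\chi$ is represented by~$\beta$ then~$\chi^{-1}$ is represented by~$-\beta$, and for odd~$p$ these do not agree on~$U_E^{(d/2)+}$, so the hypothesis that~$\chi,\chi'$ coincide on~$U_E^{d/2}$ fails. So the ``direct Gauss-sum computation'' you invoke is not the one packaged in the paper, and you have not supplied an alternative. In short: your reduction is a nice idea, but the existence of the required~$V$ is asserted, not proved.
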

Thus twisting only by characters of~$F^\times$ cannot be sufficient to distinguish supercuspidal representations, even if all fundamental representations of the dual group are considered.

\subsection*{Acknowledgements}
We would like to thank Wee Teck Gan, Baiying Liu, Nadir Matringe, A. Raghuram, and Shuichiro Takeda for helpful conversations.
We also thank the referee for their careful reading of the paper, and for several observations which have improved it.

\color{black}
\section{Representations of the Weil group and~$\gamma$-factors}\label{llp}

We recall the parametrization of tame Langlands parameters and their associated local factors. Let $F$ be a non-archimedean local field of characteristic zero, with ring of integers~$\CO_F$, maximal ideal~$\CP_F$, residue field~$k_F\simeq\BF_q$, where~$q =q_F= p^f$ is a power of the residual characteristic~$p$, and~$\mu'_F$ the group of roots of unity of order prime to~$p$. We use analogous notation for any extension~$E$ of~$F$. We also fix a non-trivial additive character $\psi_F$ of $F$ of level one: that is, it is trivial on $\CP_F$ but not on $\CO_F$. For any finite extension $E/F$, write~$e_{E/F}$ and~$f_{E/F}$ for the ramification index and residue degree of~$E/F$ respectively. We also define an additive character $\psi_E$ of $E$ via $\psi_E=\psi_F \circ \tr_{E/F}$; if $E/F$ is tamely ramified (that is,~$p\nmid e_{E/F}$) then $\psi_E$ is also of level one.

Let~$\overline{F}$ be a fixed algebraic closure of~$F$. We normalize the additive valuation~$\val$ on~$F$ to have image~$\BZ\cup\{\infty\}$ and extend this to an additive valuation on~$\overline F$; thus, for~$E/F$ any finite extension, we have~$\val(E^\times)=\frac 1{e_{E/F}}\BZ$. For~$r$ a real number, we write
\[
\CP_E^r = \{x\in E : \val(x)\ge r\},\qquad
\CP_E^{r+} =  \{x\in E : \val(x) > r\}
\]
so, for example,~$\CP_E^{1/e_{E/F}}=\CP_E=\CP_E^{0+}$. We put~$U_E=U_E^0=\CO_E^\times$, with filtration subgroups~$U_E^r=1+\CP_E^r$, for~$r>0$, and~$U_E^{r+}=1+\CP_E^{r+}$, for~$r\ge 0$. 

We also write~$|\cdot|$ for the normalized absolute value on~$\overline{F}$, given by
\[
|x| = q^{-\val(x)}, \qquad\text{ for }x\in \overline{F}^\times.
\]
Note that if~$E/F$ is a finite extension then the usual normalization of the absolute value on~$E$ is given by~$x\mapsto |x|^{[E:F]}$, for~$x\in E$.

Let~$W_F$ be the Weil group of~$F$. Implicit in the definition of the Weil group are compatible choices of isomorphisms~$E^\times\simeq W_E^{\text{ab}}$, for each finite extension~$E/F$, via which we will identify the quasi-characters of~$E^\times$ and of~$W_E$. Moreover, if $F \subseteq E \subseteq K$ and $\chi$ is a quasi-character of $E^{\times}$, let $\chi_K:=\chi \circ \N_{K/E}$. When~$\chi$ is viewed as a quasi-character of $W_E$, the quasi-character~$\chi_K$ is simply the restriction of $\chi$ to the subgroup~$W_K$. For~$E/F$ a finite extension, write~$\Ind_{E/F}=\Ind_{W_E}^{W_F}$ for the induction functor.

\subsection{Admissible pairs}

Let~$N>1$ be an integer not divisible by~$p$. In this situation, there is a nice parametrization of the irreducible~$N$-dimensional representations of $W_F$ in terms of admissible quasi-characters, introduced by Howe.

\begin{defn}[Howe~\cite{Ho77}]
An \emph{admissible pair} of degree~$N$ is a pair~$(E/F,\chi)$ where $E/F$ is a (tamely ramified) extension of degree $N$ and~$\chi$ is a quasi-character of $E^{\times}$ such that
\begin{enumerate}
\item $\chi$ does not come via the norm from a proper subfield of $E$ containing $F$; and
\item if the restriction $\chi|_{U_E^{0+}}$ comes via the norm from a subfield $F \subseteq L \subseteq E$, then $E/L$ is unramified.
\end{enumerate}
Two admissible pairs~$(E_1/F,\chi_1)$ and~$(E_2/F,\chi_2)$ are said to be \emph{conjugate} if there is an $F$-isomorphism from $E_1$ to $E_2$ which takes $\chi_1$ to $\chi_2$.
\end{defn}
In this situation we also say that~$\chi$ is an \emph{admissible quasi-character} of~$E^\times$ (relative to~$F$). For any quasi-character of~$E^\times$, we write
\[
\rho_\chi:=\Ind_{E/F}\chi.
\]

\begin{thm}[{Moy~\cite[Theorem 2.2.2]{M86}}]\label{thm:moy1}
If~$(E/F,\chi)$ is an admissible pair of degree~$N$, then $\rho_\chi$ is an irreducible $N$-dimensional representation of $W_F$. Furthermore, two admissible pairs induce to equivalent representations if and only if they are conjugate, and each irreducible $N$-dimensional representation of $W_F$ is induced from an admissible pair.
\end{thm}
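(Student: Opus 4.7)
The plan is to prove the three assertions of Theorem~\ref{thm:moy1} in turn -- irreducibility of~$\rho_\chi$, injectivity of the construction (up to conjugacy), and surjectivity onto irreducible $N$-dimensional representations -- following the outline of Moy's original argument. Throughout, the hypothesis $p \nmid N$ is what makes everything work: it lets us insist that $E/F$ is tamely ramified, and it forces the wild inertia subgroup~$P_F$ to act trivially on any irreducible $N$-dimensional representation of~$W_F$.

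For irreducibility, I would apply Mackey's criterion to $\rho_\chi=\Ind_{W_E}^{W_F}\chi$. After passing to the Galois closure~$\widetilde E/F$ and choosing double coset representatives, this reduces to showing that, for every $\sigma\in\Gal(\widetilde E/F)$ with $\sigma E\neq E$ (as subfields of $\widetilde E$), the characters $\chi$ and $\chi\circ\sigma$ disagree on the units of $E\cap\sigma E$. Condition~(i) of admissibility -- that $\chi$ does not factor through a norm $\N_{E/L}$ for any proper intermediate field~$F\subseteq L\subsetneq E$ -- is exactly the statement that the stabilizer of~$\chi$ in the relevant Galois group is $\Gal(\widetilde E/E)$, and this is what Mackey needs. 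So irreducibility follows from condition~(i) alone, and condition~(ii) plays no role here.

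For the classification up to conjugacy, the ``if'' direction is immediate from the construction of induction. For the ``only if'' direction, suppose $\Ind_{E_1/F}\chi_1\cong\Ind_{E_2/F}\chi_2$ and apply Frobenius reciprocity together with Mackey's decomposition formula. Irreducibility forces some double coset $W_{E_2}\sigma W_{E_1}$ to contribute non-trivially, which produces an $F$-embedding $\sigma\colon E_1\hookrightarrow\widetilde E$ identifying $\chi_1$ with $\chi_2\circ\sigma$ on $\sigma(E_1)\cap E_2$. The admissibility of both pairs then forces $\sigma(E_1)=E_2$ and $\chi_1=\chi_2\circ\sigma$ on the whole of~$E_1^\times$, i.e.\ the pairs are conjugate.

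For surjectivity, let $\pi$ be any irreducible $N$-dimensional representation of~$W_F$. Since $p\nmid N$, counting dimensions of $P_F$-isotypic components shows $\pi|_{P_F}$ is trivial, so $\pi$ factors through the tame quotient $W_F/P_F\simeq\widehat\BZ\ltimes\BZ_{(p')}(1)$. This quotient is monomial in the appropriate sense, so $\pi\cong\Ind_{W_E}^{W_F}\chi$ for some tamely ramified degree-$N$ extension~$E/F$ and some quasi-character~$\chi$ of~$E^\times$. Condition~(i) is arranged by choosing~$E$ minimal (equivalently, $\chi$ is not of the form $\chi_0\circ\N_{E/L}$ for any proper subfield~$L$), and the main remaining step -- which I expect to be the principal technical obstacle -- is to arrange condition~(ii): if $\chi|_{U_E^{0+}}$ factors through the norm from some subfield~$F\subseteq L\subsetneq E$, one must replace the pair $(E,\chi)$ by the pair obtained from the compositum of~$L$ with the maximal unramified subextension of~$E/L$, and verify that the induced representation is unchanged while condition~(i) is preserved. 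This rests on the structure theory of characters of $E^\times$ through the filtration $\{U_E^r\}$ and on standard properties of tame norm maps, which together guarantee the existence and well-definedness of this normalisation.
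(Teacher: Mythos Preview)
The paper does not supply its own proof of Theorem~\ref{thm:moy1}; the result is simply quoted from Moy~\cite[Theorem~2.2.2]{M86} as an input. So there is no argument in the paper to compare your outline against.

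That said, your outline of the surjectivity step contains a genuine error. You claim that, since $p\nmid N$, the restriction $\pi|_{P_F}$ to wild inertia must be trivial, so that $\pi$ factors through the tame quotient $W_F/P_F$. This is false. For a concrete counterexample with $p$ odd, take $E/F$ a tamely ramified quadratic extension and $\chi$ a character of $E^\times$ of positive normalized depth (so $\chi$ is nontrivial on $U_E^{0+}$). Under the reciprocity isomorphism, $U_E^{0+}$ corresponds to the wild inertia subgroup $P_E=P_F$ (equality because $E/F$ is tame), so $\rho_\chi=\Ind_{E/F}\chi$ is an irreducible $2$-dimensional representation of $W_F$ on which $P_F$ acts nontrivially. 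What $p\nmid N$ actually buys you, via Clifford theory, is only that every irreducible constituent of $\pi|_{P_F}$ has $p$-power dimension dividing $N$, hence dimension~$1$; so $\pi|_{P_F}$ is a direct sum of \emph{characters} of $P_F$, not that these characters are trivial. The correct reduction then takes the $W_F$-stabiliser $W_E$ of one such character $\psi$ (an open subgroup containing $P_F$, hence corresponding to a tame extension $E/F$), writes $\pi=\Ind_{W_E}^{W_F}\sigma$ with $\sigma|_{P_F}$ isotypic of type $\psi$, extends $\psi$ to a character $\tilde\psi$ of $W_E$, and applies the monomial structure of the tame quotient to the twist $\sigma\otimes\tilde\psi^{-1}$, which \emph{is} trivial on $P_F$, rather than to $\pi$ directly. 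Your identification of the normalisation step needed to arrange condition~(ii) as the principal remaining difficulty is reasonable, but the preceding reduction needs this repair.
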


Let $E/F$ be a (tamely ramified) extension of degree $N$, and $\chi$ a \emph{ramified} quasi-character of~$E^{\times}$; that is,~$\chi$ is non-trivial on~$U_E$. Let $d(\chi)\in \frac 1{e_{E/F}}\BZ$ be the \emph{normalized depth} of~$\chi$: that is,~$\chi$ is trivial on $U_E^{d(\chi)+}$ but not on $U_E^{d(\chi)}$. (The \emph{conductoral exponent} of~$\chi$ is then~$e_{E/F} d(\chi)+1$.) There is then~$c_{\chi} \in \CP_E^{-d(\chi)}$, well-defined modulo~$\CP_E^{- {d(\chi)}/2}$ such that
\[
\chi(1+x)=\psi_E(c_{\chi}x),\qquad\text{ for }x\in\CP_E^{(d(\chi)/2)+}.
\]
We say that~$c_\chi$ \emph{represents}~$\chi$. We extend this to unramified quasi-characters~$\chi$ of~$E^\times$ by setting~$d(\chi)=0$ and~$c_\chi=0$ (which we will also take for quasi-characters of depth~$0$).

Note that if~$K/E$ is tamely ramified then~$d(\chi_K)=d(\chi)$, since~$\N_{K/E}(U_K^r)=U_E^r$, for any~$r>0$. Moreover~$\chi_K$ is also represented by~$c_\chi$: that is, we can choose~$c_{\chi_K} = c_{\chi}$. We also make the following simple observation.

\begin{lem}\label{lem:depth}
Let~$E/F$ and~$L/F$ be finite tamely ramified extensions and let~$\chi,\eta$ be quasi-characters of~$E^\times, L^\times$ respectively, represented by~$c_\chi$ and~$c_\eta$ respectively. We consider the quasi-character~$\chi_{EL}\eta_{EL}$ of the compositum~$(EL)^\times$.
\begin{enumerate}
\item We have~$d(\chi_{EL}\eta_{EL})\le \max\{d(\chi),d(\eta)\}$.
\item If~$d(\chi)\ne d(\eta)$ then~$\chi_{EL}\eta_{EL}$ is ramified of normalized depth~$d(\chi_{EL}\eta_{EL})=\max\{d(\chi),d(\eta)\}$, and is represented by~$c_\chi+c_\eta$.
\end{enumerate}
\end{lem}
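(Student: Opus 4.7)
The plan is to reduce both parts to an elementary statement about the product of two quasi-characters of the single field $EL$, where the depth and representative of $\chi_{EL}$ (respectively $\eta_{EL}$) can be read off directly from those of $\chi$ (respectively $\eta$). First I would invoke the observation made just before the lemma: because $E/F$ and $L/F$ are tamely ramified, so is the compositum $EL/F$, and in particular $EL/E$ and $EL/L$ are tamely ramified. Hence $\chi_{EL}$ has the same normalized depth $d(\chi)$ as $\chi$ and is represented by the same element $c_\chi$ (viewed in $\CP_{EL}^{-d(\chi)}$), and similarly for $\eta_{EL}$.

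For (i), I would set $d=\max\{d(\chi),d(\eta)\}$ and observe that both $\chi_{EL}$ and $\eta_{EL}$ are trivial on $U_{EL}^{d+}$, hence so is their product, giving $d(\chi_{EL}\eta_{EL})\le d$.

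For (ii), I would assume without loss of generality that $d:=d(\chi)>d(\eta)$. Then $\eta_{EL}$ is already trivial on $U_{EL}^d$ (its depth being strictly smaller), so $\chi_{EL}\eta_{EL}$ coincides with $\chi_{EL}$ on $U_{EL}^d$ and is nontrivial there; together with (i) this forces $d(\chi_{EL}\eta_{EL})=d=\max\{d(\chi),d(\eta)\}$. To identify a representative I would pick $x\in\CP_{EL}^{d/2+}$; since $d/2>d(\eta)/2$, we also have $x\in\CP_{EL}^{d(\eta)/2+}$, so the representing formulas for $\chi_{EL}$ and $\eta_{EL}$ both apply and multiply to give
\[
(\chi_{EL}\eta_{EL})(1+x) \;=\; \psi_{EL}(c_\chi x)\,\psi_{EL}(c_\eta x) \;=\; \psi_{EL}\bigl((c_\chi+c_\eta)x\bigr),
\]
showing that $c_\chi+c_\eta$ represents $\chi_{EL}\eta_{EL}$.

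The only step that requires any thought is the first one, namely that depths and representatives are preserved under the tame base change $\chi\mapsto\chi_{EL}$; this rests on the equality $\N_{EL/E}(U_{EL}^r)=U_E^r$ for $r>0$ in tame extensions, which is precisely the fact the authors cite in the paragraph preceding the lemma. Once that is in hand, parts (i) and (ii) follow directly from the definitions, and I expect no further obstacle.
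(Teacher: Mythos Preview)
Your argument is correct and is precisely the natural unpacking of what the paper calls a ``simple observation'' (the lemma is stated without proof there). You use exactly the ingredients the authors highlight in the paragraph preceding the lemma---preservation of depth and representative under tame base change---and then the rest is immediate from the definitions, just as you say.
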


There are certain situations in which we can tell directly from a representative for a quasi-character~$\chi$ of~$E^\times$ that it is admissible over~$F$. For~$d<0$ a rational number, we say that~$\beta\in E$ is \emph{quasi-minimal (over F) of depth~$d$} if $\val(\beta)=-d$ and
\begin{itemize}
\item every element of the coset~$\beta+\CP_E^{-d/2}$ generates the (tame) extension~$E/F$.
\end{itemize}
Note that~$\beta$ is called \emph{minimal} in~\cite[(1.4.14)]{BK93} if $\val(\beta) = -d$ and every element of the coset~$\beta+\CP_E^{-d+}$ generates~$E/F$. We will use the notion especially in the \emph{minimal totally ramified} case, that is when~$E/F$ is totally ramified of degree~$N$ and~$d$ has least denominator~$N$.

If~$\beta$ is quasi-minimal then the character~$1+x\mapsto \psi_E(\beta x)$ of~$U_E^{(d/2)+}$ does not come via the norm from a proper subfield of $E$ containing $F$. In particular, any quasi-character represented by a quasi-minimal element is admissible; this generalizes the remark that any \emph{generic} quasi-character (in the sense of Kutzko~\cite{Ku80}, see~\cite[Definition~2.2.3]{M86}) is admissible.

\subsection{Local factors}

As above, let~$E/F$ be an extension of degree~$N$ not divisible by~$p$, let~$\chi$ be a ramified quasi-character of~$E^\times$ of depth~$d(\chi)> 0$, and set~$J = U_E^{d(\chi)/2}$ and $H = U_E^{(d(\chi)/2)+}$. We define the Gauss sum 
\[
G(\chi,\psi_F)=[J:H]^{-\frac{1}{2}} \sum_{x \in J/H} \chi^{-1}(x)\psi_F(c_{\chi}(x-1)).
\]
This collapses to~$1$ if~$J=H$, which happens precisely when the integer~$e_{E/F}d(\chi)$ is odd.

Associated to any quasi-character~$\chi$ of~$E^\times$ we also have local factors~$L(s,\chi)$,~$\epsilon(s,\chi,\psi_E)$ and~$\gamma(s,\chi,\psi_E)$, by Tate's thesis~\cite{T50} (see also the account in~\cite[\S23]{BH06}). When~$\chi$ is ramified we have~$L(s,\chi)=1$ and~$\epsilon(s,\chi,\psi_E)=\gamma(s,\chi,\psi_E)$. Denote by $\epsilon(\chi, \psi_E)$ the value of the $\epsilon$-factor at $s=0$. Note that $\epsilon(s, \chi, \psi_E)=\epsilon(\chi \lvert \cdot \rvert^{Ns}, \psi_E)$; the extra factor~$N$ appears here because of our normalization of the absolute value.

\begin{prop}[{Moy, \cite[(2.3.17)]{M86}}]\label{prop:epsilonfactorformula}
For~$\chi$ a ramified quasi-character of~$E^\times$, we have
\[
\epsilon(\chi, \psi_E) = \chi^{-1}(c_{\chi}) \psi_E(c_{\chi}) |c_{\chi}|^{N/2} G(\chi,\psi_E).
\]
\end{prop}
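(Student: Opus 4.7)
The plan is to compute~$\epsilon(\chi,\psi_E)$ directly from its definition via Tate's local functional equation. For a ramified quasi-character, this realizes~$\epsilon(\chi,\psi_E)$ (up to a measure normalization chosen so that~$|\epsilon(\chi,\psi_E)|=|c_\chi|^{N/2}$ in the conventions of the paper, where~$|\cdot|$ is the $F$-normalized absolute value) as the regularized local integral
\[
\epsilon(\chi,\psi_E) \;=\; \int_{E^\times} \chi^{-1}(x)\,\psi_E(x)\,d^\times x.
\]
The first step is to localize the support. Writing~$x=\alpha(1+y)$ with~$y\in\CP_E^r$ for~$r$ somewhat larger than~$d(\chi)/2$, the factorization
\[
\chi^{-1}(\alpha(1+y))\,\psi_E(\alpha(1+y)) \;=\; \chi^{-1}(\alpha)\,\psi_E(\alpha)\,\chi^{-1}(1+y)\,\psi_E(\alpha y),
\]
combined with the defining relation~$\chi(1+y)=\psi_E(c_\chi y)$ on~$U_E^{(d(\chi)/2)+}$ and orthogonality of additive characters, forces the integrand to vanish unless~$\alpha\in c_\chi\cdot U_E$. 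After the substitution~$x=c_\chi u$, the integral collapses to a finite character sum over~$U_E/U_E^{d(\chi)+}$.

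The substitution pulls out the prefactors~$\chi^{-1}(c_\chi)\,\psi_E(c_\chi)$, and the combination of the self-dual Haar measure on~$E$ with the $F$-normalized absolute value produces exactly the factor~$|c_\chi|^{N/2}$. It remains to identify the finite sum over~$U_E/U_E^{d(\chi)+}$ with~$G(\chi,\psi_E)$. Filtering through~$U_E\supset J=U_E^{d(\chi)/2}\supset H=U_E^{(d(\chi)/2)+}$, the contribution from cosets outside~$J$ is annihilated by another application of character orthogonality using the defining relation of~$c_\chi$; while for~$u\in J$, writing~$u=1+y$ and applying the same relation modulo~$H$ converts the integrand into~$\chi^{-1}(u)\,\psi_E(c_\chi(u-1))$, matching the summand of~$G(\chi,\psi_E)$. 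The normalizing factor~$[J:H]^{-1/2}$ built into~$G(\chi,\psi_E)$ is calibrated so that~$|\epsilon(\chi,\psi_E)|=|c_\chi|^{N/2}$, as required.

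The main technical obstacle will be the bookkeeping of normalizations: since~$d(\chi)$ is rational (with denominator dividing~$e_{E/F}$), one must carefully reconcile the self-dual Haar measure on~$E$, the $F$-normalized absolute value, and the multiplicative measure entering the definition of~$\epsilon$ in order to isolate the precise exponent~$N/2$. A built-in consistency check is independence from the choice of~$c_\chi$ modulo~$\CP_E^{-d(\chi)/2}$: on the right-hand side this independence is exactly what makes~$G(\chi,\psi_E)$ well-defined, so one need only verify that the product~$\chi^{-1}(c_\chi)\,\psi_E(c_\chi)$ transforms correctly under~$c_\chi\mapsto c_\chi+\delta$ with~$\delta\in\CP_E^{-d(\chi)/2}$, which follows from the defining relation~$\chi(1+\delta/c_\chi)=\psi_E(\delta)$ applied to~$\delta/c_\chi\in\CP_E^{(d(\chi)/2)+}$. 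The degenerate case where~$e_{E/F}d(\chi)$ is odd (so~$J=H$ and~$G(\chi,\psi_E)=1$) is recovered from the same calculation, with the character sum collapsing to a single term and the formula reducing to~$\epsilon(\chi,\psi_E)=\chi^{-1}(c_\chi)\,\psi_E(c_\chi)\,|c_\chi|^{N/2}$.
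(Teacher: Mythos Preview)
The paper gives no proof of this proposition; it is simply quoted from Moy and used as a black box for the subsequent corollary. Your outline is the standard derivation via Tate's local integral, which is essentially what Moy carries out in the cited reference: realize~$\epsilon(\chi,\psi_E)$ as the (self-dually normalized) integral of~$\chi^{-1}\psi_E$, show by orthogonality that only the coset~$c_\chi U_E$ contributes, pull out the prefactors~$\chi^{-1}(c_\chi)\,\psi_E(c_\chi)\,|c_\chi|^{N/2}$, and then reduce the remaining sum over~$U_E/H$ to one over~$J/H$ by a second orthogonality argument.

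Two small points of precision. First, the Tate integral producing~$\epsilon(\chi,\psi_E)$ is more naturally taken against the \emph{additive} self-dual measure~$dx$ rather than~$d^\times x$; since the support collapses to a single~$U_E$-coset this is only a constant discrepancy, and you already flag the measure bookkeeping as the main technical issue, but working with~$dx$ makes the factor~$|c_\chi|^{N/2}=q_E^{(a(\chi)-1)/2}$ drop out most transparently as the self-dual volume of the relevant coset. Second, after the substitution~$x=c_\chi u$ and extraction of~$\chi^{-1}(c_\chi)\psi_E(c_\chi)$, the remaining integrand is already~$\chi^{-1}(u)\,\psi_E(c_\chi(u-1))$ on the nose, so no further application of the defining relation of~$c_\chi$ is needed to match the Gauss-sum summand on~$J/H$; that relation enters only in the orthogonality step killing the contribution of~$U_E\setminus J$.
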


As an immediate corollary, we get the following simple result.

\begin{cor}[{cf.~\cite[Lemma~4.3]{A22}}]\label{cor:lem2}
Suppose~$\chi_1,\chi_2$ are ramified quasi-characters of~$E^\times$ of normalized depth~$d\ge 0$, and that~$\chi_1,\chi_2$ coincide on~$U_E^{d/2}$ so that they can be represented by a common element~$c_\chi$. Then
\[
\gamma(s, \chi_1, \psi_E) = 
\left(\frac{\chi_1(c_\chi)}{\chi_2(c_\chi)}\right)\gamma(s, \chi_2, \psi_E).
\]
\end{cor}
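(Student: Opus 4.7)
The plan is to reduce everything to Proposition~\ref{prop:epsilonfactorformula} via a direct computation, using that since the $\chi_i$ are ramified we have $\gamma(s,\chi_i,\psi_E)=\epsilon(s,\chi_i,\psi_E)=\epsilon(\chi_i|\cdot|^{Ns},\psi_E)$.

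First, I would observe that the unramified twist $|\cdot|^{Ns}$ is trivial on~$U_E$, so it does not change any of the relevant invariants attached to~$\chi_i$ on the units: the normalized depth of $\chi_i|\cdot|^{Ns}$ is still $d$, and the same $c_\chi$ represents $\chi_i|\cdot|^{Ns}$ (i.e.\ one can take $c_{\chi_i|\cdot|^{Ns}}=c_\chi$, well-defined modulo $\CP_E^{-d/2}$, for both $i=1,2$, by the hypothesis that $\chi_1,\chi_2$ coincide on $U_E^{d/2}$).

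Next, apply Proposition~\ref{prop:epsilonfactorformula} to each $\chi_i|\cdot|^{Ns}$:
\[
\epsilon(\chi_i|\cdot|^{Ns},\psi_E) \;=\; \chi_i^{-1}(c_\chi)\,|c_\chi|^{-Ns}\,\psi_E(c_\chi)\,|c_\chi|^{N/2}\,G(\chi_i|\cdot|^{Ns},\psi_E).
\]
The key observation is that the Gauss sum $G(\chi_i|\cdot|^{Ns},\psi_E)$ is an average over $J/H=U_E^{d/2}/U_E^{(d/2)+}$ of the function $x\mapsto \chi_i^{-1}(x)\psi_F(c_\chi(x-1))$, and by hypothesis $\chi_1$ and $\chi_2$ coincide on $J=U_E^{d/2}$; in particular the unramified twist contributes trivially there. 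Hence
\[
G(\chi_1|\cdot|^{Ns},\psi_E)=G(\chi_2|\cdot|^{Ns},\psi_E).
\]

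Finally, dividing the two resulting expressions for $\epsilon(\chi_i|\cdot|^{Ns},\psi_E)$, all factors cancel except those depending on $\chi_i(c_\chi)$, yielding the claimed proportionality. The whole argument is essentially a bookkeeping exercise; there is no serious obstacle, the only point requiring a moment's care being that the same representing element $c_\chi$ may be used simultaneously for both characters and persists under the unramified twist $|\cdot|^{Ns}$.
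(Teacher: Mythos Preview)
Your proof is correct and follows exactly the approach intended by the paper, which presents this as ``an immediate corollary'' of Proposition~\ref{prop:epsilonfactorformula} without further detail. Your explicit treatment of the unramified twist $|\cdot|^{Ns}$ and the verification that the Gauss sums agree (since $\chi_1,\chi_2$ coincide on $J=U_E^{d/2}$) are precisely the points one needs to check.
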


More generally, for~$\rho$ any semisimple finite-dimensional complex representation of the Weil group~$W_F$, there are local factors~$L(s,\rho)$,~$\epsilon(s,\rho,\psi_F)$ and~$\gamma(s,\rho,\psi_F)$ (see, for example,~\cite[Theorem~29.4]{BH06}). We have~$L(s,\rho)=1$ when~$\rho$ is irreducible of dimension~$N>1$ so the~$\epsilon$ and~$\gamma$ factors again coincide. Moreover, the local factors are multiplicative:~$\gamma(s,\rho_1\oplus\rho_2,\psi_F)=\gamma(s,\rho_1,\psi_F)\gamma(s,\rho_2,\psi_F)$, and similarly for~$L$ and~$\epsilon$.

We end this section by recalling the following results on the tensor product of such representations. For~$K/F$ a finite extension, we write~$\lambda_{K/F}(\psi_F)$ for the Langlands constant associated to $K/F$ and $\psi_F$ (see \cite[Theorem~29.4 and~\S34.3]{BH06}).

\begin{lem}[{\cite[Lemmas~2.4,~2.5]{ALST18}}]\label{lem1}
Let~$E$ and~$L$ be tamely ramified field extensions of~$F$, and let~$\chi$ and~$\eta$ be quasi-characters of~$E^\times$ and~$L^\times$ respectively. For~$g\in W_F$ we write~$E_g=g(E)$ and~$K_g=E_gL$, and set~$\theta_g=({}^g\chi)_{K_g} \eta_{K_g}$. Then
\[
\rho_\chi\otimes \rho_\eta \cong \bigoplus_{g\in W_L\backslash W_F/W_E} \Ind_{K_g/F} \theta_g
\]
and
\[
\gamma(s, \rho_\chi  \otimes \rho_\eta, \psi_F) = \prod_{g\in W_L\backslash W_F/W_E}
\lambda_{K_g/F}(\psi_F)\gamma(s,\theta_g, \psi_{K_g}).
\]
\end{lem}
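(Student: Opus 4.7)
The plan is to prove the two assertions in sequence: first the decomposition of $\rho_\chi\otimes\rho_\eta$ (which is purely a Mackey-theoretic statement), then deduce the $\gamma$-factor identity by combining multiplicativity of $\gamma$-factors with inductivity in degree zero.

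For the first identity, I would start by applying the projection formula
\[
\rho_\chi\otimes\rho_\eta \;=\; \bigl(\Ind_{L/F}\eta\bigr)\otimes \rho_\chi \;\cong\; \Ind_{L/F}\bigl(\eta\otimes \Res_{W_L}^{W_F}\rho_\chi\bigr),
\]
and then rewriting $\Res_{W_L}^{W_F}\rho_\chi=\Res_{W_L}^{W_F}\Ind_{W_E}^{W_F}\chi$ by Mackey's formula as
\[
\Res_{W_L}^{W_F}\Ind_{W_E}^{W_F}\chi \;\cong\; \bigoplus_{g\in W_L\backslash W_F/W_E}\Ind_{W_L\cap gW_Eg^{-1}}^{W_L}{}^g\chi.
\]
For each double coset representative $g$, one identifies $gW_Eg^{-1}=W_{g(E)}=W_{E_g}$ and hence $W_L\cap gW_Eg^{-1}=W_{LE_g}=W_{K_g}$; the restriction of ${}^g\chi$ to $W_{K_g}$ is $({}^g\chi)_{K_g}$. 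Applying projection formula once more inside the induction from $W_{K_g}$ to $W_L$ turns $\eta\otimes \Ind_{W_{K_g}}^{W_L}{}^g\chi$ into $\Ind_{W_{K_g}}^{W_L}(\eta_{K_g}\cdot({}^g\chi)_{K_g})=\Ind_{W_{K_g}}^{W_L}\theta_g$. Finally, transitivity of induction, $\Ind_{L/F}\circ\Ind_{W_{K_g}}^{W_L}=\Ind_{K_g/F}$, yields the stated decomposition.

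For the second identity, I would invoke two standard properties of Deligne's local factors. First, additivity: $\gamma(s,\rho\oplus\rho',\psi_F)=\gamma(s,\rho,\psi_F)\gamma(s,\rho',\psi_F)$ reduces the problem to a product over the double cosets. Second, inductivity in degree zero: for the one-dimensional character $\theta_g$ of $W_{K_g}$, one has
\[
\gamma\bigl(s,\Ind_{K_g/F}\theta_g,\psi_F\bigr)\;=\;\lambda_{K_g/F}(\psi_F)\,\gamma(s,\theta_g,\psi_{K_g}),
\]
which is the standard consequence of the degree-zero inductivity formula for $\epsilon$-factors together with the equality $L(s,\Ind\theta_g)=L(s,\theta_g)$ for $W_F$-representations induced from $W_{K_g}$ (both $L$-factors being read off the same $W_F$-module structure). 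Combining these two facts gives the stated product formula.

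There is no real obstacle here; the content is bookkeeping with Mackey's formula and the standard formalism of local constants, and the fact that $E/F$ and $L/F$ are tamely ramified is not actually used in the proof itself (it is merely the hypothesis under which the rest of the paper is working). The only point that requires a small amount of care is the identification of the stabilizer $W_L\cap gW_Eg^{-1}$ with $W_{K_g}$, which uses that $LE_g$ is the compositum in a fixed algebraic closure of $F$ inside which all Weil subgroups are taken.
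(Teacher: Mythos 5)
Your proof is correct and is precisely the standard argument (Mackey decomposition via the projection formula for the first identity, additivity plus degree-zero inductivity of local constants for the second). The paper itself gives no proof but simply cites~\cite[Lemmas~2.4, 2.5]{ALST18}, where the same Mackey--plus--inductivity approach is carried out; your write-up matches that route, including the correct identification $W_L\cap gW_Eg^{-1}=W_{K_g}$ and the observation that tameness plays no role in the proof.
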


\subsection{Totally ramified quasi-characters}

In this subsection, we give the main technical results on equalities of~$\gamma$-factors for representations of~$W_F$ which are at the heart of our sharpness examples. We continue with~$E/F$ an extension of degree~$N$ not divisible by~$p$ and let~$\chi$ be a ramified quasi-character of~$E^\times$ of depth~$d(\chi)> 0$. We say that~$\chi$ is \emph{totally ramified} if~$E/F$ is totally ramified, and that~$\chi$ is \emph{minimal totally ramified} if also it is represented by a minimal element (equivalently, $d(\chi)$ has least denominator~$N$).

For~$L/F$ a finite extension of degree~$r$ and~$\alpha\in L$, we write~$f_\alpha(X)\in F[X]$ for the characteristic polynomial of~$\alpha$ acting by multiplication on~$L$, so that
\[
f_\alpha(X) = \prod_{\sigma\in\Gal(L/F)} (X-\sigma(\alpha)),
\]
where~$\Gal(L/F)$ is the set of~$F$-embeddings of~$L$ into~$\overline{F}$. Of course, this polynomial~$f_\alpha$ depends on the field~$L$ as well as~$\alpha$ but the field will be clear from context. 

\begin{prop}\label{prop:basic}
For~$i$ in some finite set~$I$, let~$(E_i/F,\chi_i),(E_i/F,\chi'_i)$ be totally ramified admissible pairs of degree~$N_i$ and depth~$d_i$, such that~$\chi_i,\chi'_i$ coincide on~$U^{d_i/2}_{E_i}$; choose~$\beta_i$ which represents both. Suppose~$(L,\eta)$ is an admissible pair of degree~$r$, represented by~$\alpha$, with depth distinct from all~$d_i$, and set~$\tau=\Ind_{L/F}\eta$. Then the following are equivalent:
\begin{enumerate}
\item\label{prop:basic.i} $\gamma(s,(\bigoplus\limits_{i\in I}\rho_{\chi_i})\otimes\tau,\psi_F)=\gamma(s,(\bigoplus\limits_{i\in I}\rho_{\chi'_i})\otimes\tau,\psi_F)$;
\item $\prod\limits_{i\in I} \chi_i((-1)^r f_\alpha(-\beta_i)) = \prod\limits_{i\in I} \chi'_i((-1)^r f_\alpha(-\beta_i))$.
\end{enumerate}
\end{prop}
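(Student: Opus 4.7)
The plan is to explicitly evaluate the ratio
\[
R(s) := \frac{\gamma\bigl(s,(\bigoplus_{i\in I} \rho_{\chi_i}) \otimes \tau, \psi_F\bigr)}{\gamma\bigl(s,(\bigoplus_{i\in I} \rho_{\chi'_i}) \otimes \tau, \psi_F\bigr)}
\]
and show that
\[
R(s) = \prod_{i\in I} \frac{\chi_i\bigl((-1)^r f_\alpha(-\beta_i)\bigr)}{\chi'_i\bigl((-1)^r f_\alpha(-\beta_i)\bigr)} ;
\]
granting this, (i)$\Leftrightarrow$(ii) is immediate, since (i) says $R(s)\equiv 1$ and (ii) says the right-hand side equals~$1$. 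By multiplicativity of $\gamma$-factors it suffices to compute, for each $i\in I$, the ratio $R_i(s) := \gamma(s,\rho_{\chi_i}\otimes\tau,\psi_F)/\gamma(s,\rho_{\chi'_i}\otimes\tau,\psi_F)$.

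For each fixed $i$, first apply Lemma~\ref{lem1} to $\rho_{\chi_i}\otimes\rho_\eta$ (and similarly to $\rho_{\chi'_i}\otimes\rho_\eta$), obtaining
\[
\gamma(s,\rho_{\chi_i}\otimes\tau,\psi_F) = \prod_{g\in W_L\backslash W_F/W_{E_i}} \lambda_{K_g/F}(\psi_F)\,\gamma(s,\theta_{g,i},\psi_{K_g}),
\]
where $K_g=g(E_i)L$ and $\theta_{g,i}=({}^g\chi_i)_{K_g}\eta_{K_g}$, together with an identical decomposition---over the \emph{same} fields $K_g$---for the primed version. The Langlands constants $\lambda_{K_g/F}(\psi_F)$ are independent of $\chi_i$ and therefore cancel in $R_i(s)$.

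Next, since $d_i\neq d(\eta)$, Lemma~\ref{lem:depth} applied over $K_g$ shows that $\theta_{g,i}$ and $\theta'_{g,i}$ are both ramified of depth $d^*:=\max\{d_i,d(\eta)\}$ and both represented by $c_g:={}^g\beta_i+\alpha$. The hypothesis that $\chi_i,\chi'_i$ agree on $U_{E_i}^{d_i/2}$, combined with surjectivity of the norm $\N_{K_g/g(E_i)}\colon U_{K_g}^{d_i/2}\twoheadrightarrow U_{g(E_i)}^{d_i/2}$ (valid because $K_g/g(E_i)$ is tame), implies that $\theta_{g,i}$ and $\theta'_{g,i}$ agree on $U_{K_g}^{d^*/2}$. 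Corollary~\ref{cor:lem2} then gives
\[
\frac{\gamma(s,\theta_{g,i},\psi_{K_g})}{\gamma(s,\theta'_{g,i},\psi_{K_g})} = (\chi_i/\chi'_i)\Bigl(g^{-1}\N_{K_g/g(E_i)}({}^g\beta_i+\alpha)\Bigr).
\]

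The remaining task is the algebraic identity
\[
\prod_{g\in W_L\backslash W_F/W_{E_i}} g^{-1}\N_{K_g/g(E_i)}({}^g\beta_i+\alpha) = (-1)^r f_\alpha(-\beta_i) \quad\text{in }E_i.
\]
This follows from the \'etale $F$-algebra decomposition $L\otimes_F E_i\simeq\prod_g K_g'$, with components $K_g':=E_i\cdot g^{-1}(L)$ (isomorphic to $K_g$ via $g$) indexed by the same double cosets. Under this decomposition, the element $\alpha\otimes 1+1\otimes\beta_i$ projects componentwise to $g^{-1}(\alpha)+\beta_i\in K_g'$, so its norm from $L\otimes_F E_i$ to $E_i$ equals both $\prod_g g^{-1}\N_{K_g/g(E_i)}({}^g\beta_i+\alpha)$ and $\prod_{\sigma\in\Gal(L/F)}(\beta_i+\sigma(\alpha))=(-1)^r f_\alpha(-\beta_i)$, the latter by direct computation of the determinant of multiplication by $\alpha\otimes 1+1\otimes\beta_i$ on $L\otimes_F E_i$ viewed as a free $E_i$-module. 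Combining all of the above yields $R_i(s) = \chi_i((-1)^r f_\alpha(-\beta_i))/\chi'_i((-1)^r f_\alpha(-\beta_i))$, and taking the product over $i$ completes the proof. The hardest step is the \'etale algebra bookkeeping---matching the double cosets $W_L\backslash W_F/W_{E_i}$ with the irreducible factors of $f_\alpha$ over $E_i$ and verifying the corresponding norm decomposition---but once that is in place the remainder is a routine translation.
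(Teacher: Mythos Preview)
Your proof is correct and follows essentially the same route as the paper's: apply Lemma~\ref{lem1} to decompose each $\gamma$-factor over the double cosets $W_L\backslash W_F/W_{E_i}$, use Lemma~\ref{lem:depth} and Corollary~\ref{cor:lem2} to reduce each local ratio to $(\chi_i/\chi'_i)$ evaluated at a norm, and then identify the product of those norms with $(-1)^r f_\alpha(-\beta_i)$. The only cosmetic differences are that you frame the argument via the ratio $R(s)$ and justify the final identity through the \'etale decomposition $L\otimes_F E_i\simeq\prod_g K_g'$, whereas the paper computes the same product directly (citing~\cite{A22}); also, for the agreement of $\theta_{g,i},\theta'_{g,i}$ on $U_{K_g}^{d^*/2}$ you invoke surjectivity of the norm on $U^{d_i/2}$, while only the containment $\N_{K_g/g(E_i)}(U_{K_g}^{d^*/2})\subseteq U_{g(E_i)}^{d_i/2}$ is needed (which is what the paper uses).
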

Note that the condition on the depth of~$\eta$ in Proposition~\ref{prop:basic} is implied if~$r$ is not divisible by any least denominator of~$d_i$, since~$d(\eta)\in\frac 1r\mathbb{Z}$. 

\begin{proof}
The ideas of the proof are essentially in the proof of~\cite[Theorem~3.2]{A22}. For~$g\in W_F$ and~$i\in I$, we write~$E_{i,g}=g(E_i)$ and~$K_{i,g}=E_{i,g}L$, and set~$\theta_{i,g}=({}^g\chi_i)_{K_{i,g}} \eta_{K_{i,g}}$ and~$\theta'_{i,g}=({}^g\chi'_i)_{K_{i,g}} \eta_{K_{i,g}}$. By Lemma~\ref{lem:depth}(ii), the quasi-characters~$\theta_{i,g}$ and~$\theta'_{i,g}$ are represented by~$g(\beta_i)+\alpha$ and have depth~$d_{i,g}:=\max\{d_i,d(\eta)\}$. Thus, by Lemma~\ref{lem1} and multiplicativity, we have
\[
\gamma(s,({\textstyle\bigoplus\limits_{i\in I}\rho_{\chi_i}})\otimes\tau,\psi_F)= \prod_{i\in I} \prod_{g\in W_L\backslash W_F/W_{E_i}}
\lambda_{K_{i,g}/F}(\psi_F)\gamma(s,\theta_{i,g}, \psi_{K_{i,g}}),
\]
Thus the equality in~\ref{prop:basic.i} is equivalent to
\[
\prod_{i\in I} \prod_{g\in W_L\backslash W_F/W_{E_i}} \gamma(s,\theta_{i,g}, \psi_{K_{i,g}}) = \prod_{i\in I} \prod_{g\in W_L\backslash W_F/W_{E_i}} \gamma(s,\theta'_{i,g}, \psi_{K_{i,g}}).
\]
Moreoever,~$\theta_{i,g}$ and~$\theta'_{i,g}$ coincide on~$U^{d_{i,g}/2}_{K_{i,g}}$ since~$\N_{K_{i,g}/E_{i,g}}(U^{d_{i,g}/2}_{K_{i,g}})\subseteq U^{d_i/2}_{E_{i,g}}$, where the quasi-characters~${}^g\chi_i$ and~${}^g\chi'_i$ coincide by hypothesis. Thus it follows from Corollary~\ref{cor:lem2} that this is equivalent to
\[
\prod_{i\in I} \prod_{g\in W_L\backslash W_F/W_{E_i}} \theta_{i,g} (g(\beta_i)+\alpha) = 
\prod_{i\in I} \prod_{g\in W_L\backslash W_F/W_{E_i}} \theta'_{i,g} (g(\beta_i)+\alpha),
\]
which is in turn equivalent to
\[
\prod_{i\in I} \prod_{g\in W_L\backslash W_F/W_{E_i}} ({}^g\chi_i)_{K_{i,g}} (g(\beta_i)+\alpha) = 
\prod_{i\in I} \prod_{g\in W_L\backslash W_F/W_{E_i}} ({}^g\chi'_i)_{K_{i,g}} (g(\beta_i)+\alpha).
\]
Thus, to complete the proof, it suffices to show that
\[
\prod_{g\in W_L\backslash W_F/W_{E_i}} ({}^g\chi_i)_{K_{i,g}} (g(\beta_i)+\alpha) = \chi_i((-1)^r f_\alpha(-\beta_i)),
\]
for which we drop the subscripts~$i$. As in~\cite[after~(4.3)]{A22}, we have
\[
\prod_{g\in W_L\backslash W_F/W_E} ({}^g\chi)_{K_{g}} (g(\beta)+\alpha) = \chi\left( \prod_{g\in W_L\backslash W_F/W_E} \prod_{\sigma\in\Gal(K_g/E_g)}(\beta+g^{-1}\sigma(\alpha))\right).
\]
But, as also explained in~\cite{A22}, this further simplifies to
\[
\chi\left(\prod_{\sigma\in\Gal(L/F)}(\beta+\sigma(\alpha))\right) = \chi((-1)^r f_\alpha(-\beta)).
\]
\end{proof}

We will apply this lemma to the case where all~$N_i=N$ are equal and all depths~$d_i$ are equal, since in that case one can further simplify. To this end, we introduce some further notation. Let~$(E/F,\chi)$ be a totally ramified admissible pair of degree~$N$ and depth~$d$ represented by~$\beta$, and let~$M$ be the least denominator of~$d$. For~$L/F$ an extension of degree less than~$M$ and~$\alpha\in L$ we set
\[
u^+_\alpha(X)= \N_{L/F}(-\alpha)^{-1} f_\alpha(-X), \qquad u^-_\alpha(X) = (-X)^{-r} f_\alpha(-X).
\]
Since~$u^\pm_\alpha(X)$ are Laurent polynomials with coefficients in~$F$, we have~$u^\pm_\alpha(\beta)\in E$. Moreover, since~$\val(\alpha)\ne\val(\beta)$, we see that
\[
\begin{cases} u^+_\alpha(\beta) \in U_E^{0+} &\text{ if }\val(\alpha)<\val(\beta),\\
u^-_\alpha(\beta) \in U_E^{0+} &\text{ if }\val(\alpha)>\val(\beta). 
\end{cases}
\]
Now put
\begin{align*}
t^+_{\beta,L} &= \inf\left\{ \val\left(1-u^+_\alpha(\beta))\right) \mid \alpha\in L,\ \val(\alpha)<\val(\beta)\right\}, \\[5pt]
t^-_{\beta,L} &= \inf\left\{ \val\left(1- u^-_\alpha(\beta))\right) \mid \alpha\in L,\ \val(\beta)<\val(\alpha)<0\right\},
\end{align*}
where we interpret~$t^-_{\beta,L}=\infty$ if the set in its definition is empty. Since~$\val(E)=\frac 1N\mathbb{Z}$, we have~$t^\pm_{\beta,L}\ge \frac 1N$. 

Now, for~$r\le M$ an integer, we set
\[
t_\beta(r) = \min\left\{ t^+_{\beta,L}, t^-_{\beta,L} \mid [L:F]< r\right\}.
\]
Again, we have~$t_\beta(r)\ge \frac 1N$. We will be particularly interested in cases when this inequality is strict. As a simple case, we have
\begin{equation}\label{eqn:tb2}
t_\beta(2) = \min\left\{\left|d-\tfrac nM\right| \mid n\in\BZ\right\},
\end{equation}
since we only need to consider~$\alpha\in F$, where~$u^\pm_\alpha(X)=1+(\alpha^{-1}X)^{\pm 1}$. More generally, we have the following, which will need later:

\begin{lem}\label{lem:tbeta}
Suppose~$E/F$ is totally ramified of degree~$N$ and~$d=\frac mM$ has least denominator~$M$. Suppose~$\beta\in E$ has valuation~$-d$ and let~$r$ be the least integer such that~$rm\equiv \pm 1\pmod M$. If $r > 1$, then ~$t_\beta(r)>\frac 1M$.
\end{lem}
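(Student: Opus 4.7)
The plan is to expand each $u^\pm_\alpha(\beta)$ as a product over the $F$-embeddings of $L$ into $\overline F$, and to play off two rigidities in tandem: the coefficients appearing in the expansion are symmetric functions of Galois conjugates of elements of $L$, hence lie in $F$ and have integer valuations; while the minimality of $r$ forbids the congruences $\ell m\equiv\pm 1\pmod M$ for $1\le\ell<r$. Together these will force every term in the expansion to have valuation at least $2/M$.

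Concretely, I would fix $L/F$ of degree $s<r$ (automatically tame since $s<r\le M$ and $\gcd(M,p)=1$), put $s'=e_{L/F}$, and take $\alpha\in L$ with $\val(\alpha)=-j/s'$. In the $u^+$ case, where $\val(\alpha)<\val(\beta)=-m/M$ (i.e.\ $jM>ms'$), I would expand
\[
u^+_\alpha(\beta) \;=\; \prod_{\sigma\colon L\hookrightarrow\overline F}\bigl(1+\sigma(\alpha^{-1})\beta\bigr) \;=\; 1+\sum_{\ell=1}^{s} e_\ell\,\beta^\ell,
\]
where $e_\ell$ is the $\ell$-th elementary symmetric polynomial in the $\sigma(\alpha^{-1})$. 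The crucial observation is that each $e_\ell\in F$, so $\val(e_\ell)\in\BZ$; combining this with the trivial bound $\val(e_\ell)\ge\ell j/s'$ upgrades it to $\val(e_\ell)\ge\lceil\ell j/s'\rceil$. I would then verify $M\lceil\ell j/s'\rceil\ge\ell m+2$ for every $\ell\in\{1,\dots,s\}$: the hypothesis $jM>ms'$ gives $M\lceil\ell j/s'\rceil>\ell m$, and integrality forces this to $\ge\ell m+1$; the residual possibility $M\lceil\ell j/s'\rceil=\ell m+1$ would require $M\mid\ell m+1$, i.e.\ $\ell m\equiv-1\pmod M$, which is prohibited for $\ell\le s<r$ by the minimality of $r$. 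Hence $\val(e_\ell\beta^\ell)\ge 2/M$ for every $\ell$, giving $\val(u^+_\alpha(\beta)-1)>1/M$ and thus $t^+_{\beta,L}>1/M$.

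The $u^-_\alpha(\beta)=\prod_\sigma(1+\sigma(\alpha)\beta^{-1})$ case is entirely parallel: one expands into elementary symmetric polynomials in the $\sigma(\alpha)$ (now with $0<jM<ms'$), replaces the ceiling by the floor, and uses the complementary congruence $\ell m\not\equiv 1\pmod M$ for $\ell\le s<r$ to upgrade $M\lfloor\ell j/s'\rfloor\le\ell m-1$ to $\le\ell m-2$. Taking the infimum over $\alpha$ and the minimum over $L$ then yields $t_\beta(r)\ge 2/M>1/M$. The main obstacle—really the only conceptual step—is the integrality upgrade $\val(e_\ell)\ge\lceil\ell j/s'\rceil$ coming from $e_\ell\in F$: without it, the raw bound $\val(e_\ell\beta^\ell)\ge\ell(jM-ms')/(s'M)$ can already be exactly $1/M$ (precisely when $jM-ms'=s'$), leaving no room to invoke the minimality of $r$. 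Everything else is bookkeeping of valuations.
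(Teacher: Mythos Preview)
Your proof is correct and follows essentially the same approach as the paper's. The paper also expands $u^\pm_\alpha(\beta)-1$ as a sum of monomials with coefficients in $F$, observes that the $i$-th monomial's valuation lies in $\mp\frac{mi}{M}+\BZ$ and is positive ``by construction'', and then excludes the value $\frac{1}{M}$ via the forbidden congruence $im\equiv\pm 1\pmod M$ for $i<r$; your version just makes the positivity step more explicit by tracking $\val(\alpha)=-j/s'$ and passing through the ceiling/floor.

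One harmless inaccuracy: the parenthetical ``automatically tame since $s<r\le M$ and $\gcd(M,p)=1$'' is not justified by the lemma's hypotheses alone (take $M=6$, $p=5$, $s=5$), but you never use tameness---only that $e_\ell\in F$, which follows from Galois invariance regardless.
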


\begin{proof}
It is enough to prove that, for any integer~$s < r$
, any extension~$L/F$ of degree~$s$, and any~$\alpha\in L$, we have~$u^{\pm}_\alpha(\beta)\in U_E^{(1/M)+}$. Now~$u^{\pm}_\alpha(X)$ is a polynomial of degree~$s$ in~$X^{\pm 1}$ with coefficients in~$F$ and constant term~$1$. Thus, for each~$1\le i\le s$, the valuation of the monomial of degree~$i$ when evaluated at~$\beta$ lies in~$\pm \frac{mi}M +\BZ$, and it is greater than~$0$ by construction. In particular, these all have distinct valuations (as~$s<M$) so the valuation of~$u^{\pm}_\alpha(\beta)-1$ is exactly the minimum of the valuations of these monomials. Since none of these monomials have valuation in~$\frac 1M+\BZ$ (by the minimality of~$r$), they are each at least~$\frac 2M$. Thus the valuation of~$u^{\pm}_\alpha(\beta)-1$ is at least~$\frac 2M$, so strictly greater than~$\frac 1M$, so~$u^{\pm}_\alpha(\beta)\in U_E^{(1/M)+}$.
\end{proof}

The following result is at the root of all our sharpness examples, and generalizes the essence of the proof of sharpness in the local converse theorem for~$\GL_N$ in~\cite{ALST18,A22}.

\begin{prop}\label{prop:basic2}
Let~$d>0$ be a rational number with least denominator~$M$ and let~$r\le \min\{p,M\}$ be a natural number. 
For~$i\in I$, let~$(E_i/F,\chi_i),(E_i/F,\chi'_i)$ be totally ramified admissible pairs of degree~$N$ and depth~$d$, such that~$\chi_i,\chi'_i$ coincide on~$U^{d/2}_{E_i}$; choose~$\beta_i$ which represents both. Set~$t_i=t_{\beta_i}(r)$, for~$i\in I$, and suppose that:
\begin{enumerate}
\item\label{prop:basic2.i} $\chi_i,\chi'_i$ coincide on restriction to~$U_{E_i}^{t_i}$, for each~$i\in I$;\\[-8pt]
\item\label{prop:basic2.ii} $\prod_{i\in I} \chi_i(\beta_i) = \prod_{i\in I} \chi'_i(\beta_i)$;\\[-8pt]
\item\label{prop:basic2.iii} $ \prod_{i\in I} \chi_i$ and~$\prod_{i\in I} \chi'_i$ coincide on restriction to~$F^\times$.
\end{enumerate}
Then, for~$\tau$ any irreducible representation of~$W_F$ of dimension less than~$r$, we have
\[\textstyle
\gamma(s,(\bigoplus\limits_{i\in I} \rho_{\chi_i})\otimes\tau,\psi_F) = \gamma(s,(\bigoplus\limits_{i\in I} \rho_{\chi'_i})\otimes\tau,\psi_F).
\]
\end{prop}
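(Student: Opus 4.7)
The plan is to reduce to Proposition~\ref{prop:basic} and then peel off the computation one factor at a time using hypotheses~(i)--(iii). Since~$\tau$ is irreducible of dimension~$s<r\le p$, in particular~$p\nmid s$, Theorem~\ref{thm:moy1} lets us write~$\tau=\Ind_{L/F}\eta$ for an admissible pair~$(L/F,\eta)$ of degree~$s$; let~$\alpha\in L$ represent~$\eta$, with the convention~$\alpha=0$ when~$\eta$ is unramified or of depth~$0$. The depth~$d(\eta)$ lies in~$\tfrac 1{e_{L/F}}\BZ\subseteq\tfrac 1s\BZ$; since~$s<M$ and~$d$ has least denominator~$M$, this forces~$d(\eta)\ne d$. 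Thus all hypotheses of Proposition~\ref{prop:basic} are in place (with all~$N_i=N$ and all~$d_i=d$), and it reduces the asserted equality of~$\gamma$-factors to
\[
\prod_{i\in I}\chi_i\bigl((-1)^s f_\alpha(-\beta_i)\bigr)=\prod_{i\in I}\chi'_i\bigl((-1)^s f_\alpha(-\beta_i)\bigr).
\]

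To prove this identity, observe that each~$\beta_i$ has valuation~$-d$, so the comparison of~$\val(\alpha)$ with~$\val(\beta_i)$ is uniform in~$i$, and~$\val(\alpha)=-d$ is ruled out by~$d(\eta)\ne d$. If~$\alpha=0$ then~$(-1)^s f_\alpha(-\beta_i)=\beta_i^s$ and the desired identity is just the~$s$-th power of hypothesis~(ii). If~$\val(\alpha)<-d$, then by the definition of~$u^+_\alpha$ we have
\[
(-1)^s f_\alpha(-\beta_i)=\N_{L/F}(\alpha)\cdot u^+_\alpha(\beta_i),
\]
and by the definition of~$t^+_{\beta_i,L}$ we get~$u^+_\alpha(\beta_i)\in U_{E_i}^{t^+_{\beta_i,L}}\subseteq U_{E_i}^{t_i}$; hypothesis~(i) then identifies~$\chi_i$ with~$\chi'_i$ on this unit factor, while the scalar~$\N_{L/F}(\alpha)\in F^\times$ is dealt with by hypothesis~(iii). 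If instead~$-d<\val(\alpha)<0$, the analogous factorisation reads
\[
(-1)^s f_\alpha(-\beta_i)=\beta_i^s\cdot u^-_\alpha(\beta_i),
\]
with~$u^-_\alpha(\beta_i)\in U_{E_i}^{t_i}$; hypothesis~(i) handles the~$u^-_\alpha(\beta_i)$ factor, and hypothesis~(ii) handles the~$\beta_i^s$ factor.

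The main technical point to notice is that the two halves of the hypothesis~$r\le\min\{p,M\}$ pull in different directions: the bound~$r\le p$ ensures that~$\tau$ arises from Moy's parametrization, whereas the bound~$r\le M$ forces~$\val(\alpha)$ to differ \emph{uniformly} from~$\val(\beta_i)$ across all~$i\in I$, so that~$f_\alpha(-\beta_i)$ always factors cleanly into an~$F$-valued part and a unit lying in~$U_{E_i}^{t_i}$. Once that separation is available, the three hypotheses~(i)--(iii) are tailored precisely to control the three resulting pieces, and nothing further is needed.
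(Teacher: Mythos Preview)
Your proof is correct and follows essentially the same route as the paper's: reduce via Moy's parametrization and Proposition~\ref{prop:basic} to the product identity, then split into cases according to~$\val(\alpha)$ and feed each factor to one of hypotheses~(i)--(iii). Your use of~$s$ for~$\dim\tau$ is in fact cleaner than the paper, which reuses the symbol~$r$ for both the bound and the degree~$[L:F]$.
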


Note that if~$d$ is chosen to have least denominator~$N$ then we are considering minimal totally ramified admissible pairs in Proposition~\ref{prop:basic2}.

\begin{proof}
Suppose~$\tau$ is an irreducible representation of~$W_F$ of dimension less than~$r$. Since~$p\ge r > \dim(\tau)$, we have~$p\nmid \dim(\tau)$ so Theorem~\ref{thm:moy1} implies that~$\tau\cong \Ind_{L/F} \eta$, for some admissible pair~$(L/F,\eta)$. Write~$d(\eta)$ for its normalized depth and~$\alpha\in L$ for some element representing~$\eta$. Since~$\dim(\tau)<r\le M$ we have~$d(\eta)\ne d$ so, by Proposition~\ref{prop:basic}, we need only check that
\[
\prod\limits_{i\in I} \chi_i((-1)^r f_\alpha(-\beta_i)) = \prod\limits_{i\in I} \chi'_i((-1)^r f_\alpha(-\beta_i)).
\]
If~$d(\eta)<d$ then either~$\alpha=0$ or~$0>\val(\alpha)>\val(\beta_i)$ for each~$i\in I$. In the former case,~$f_\alpha(X)=X^r$ so the equality follows from~\ref{prop:basic2.ii}, while in the latter case we note that~$(-1)^r f_\alpha(-\beta_i) = \beta_i^r u^-_\alpha(\beta_i)$ and the equality follows from~\ref{prop:basic2.ii} together with~\ref{prop:basic2.i}.

On the other hand, if~$d(\eta)>d$ then $\val(\alpha)<\val(\beta_i)$ for each~$i\in I$. This time we write~$(-1)^r f_\alpha(-\beta_i) = \N_{L/F}(\alpha) u^+_\alpha(\beta_i)$ so the equality follows from~\ref{prop:basic2.iii} together with~\ref{prop:basic2.i}.
\end{proof}

\subsection{Self-dual representations and determinants}

We recall some results on self-duality of representations of~$W_F$ and determinants. We assume throughout that this subsection that~$p\ne 2$. Firstly, we have the following result due to Moy.

\begin{prop}[{\cite[Theorem~1]{M84}}]\label{prop:moy}
Let~$(E/F,\chi)$ be an admissible pair with~$E\ne F$. Then the irreducible representation~$\rho_\chi$ is self-dual if and only if there is an intermediate field~$E\supset L\supseteq F$ such that~$E/L$ is quadratic and, writing~$\sigma$ for the non-trivial element of~$\Gal(E/L)$, we have~${}^\sigma\chi=\chi^{-1}$. In that case,
\begin{enumerate}
\item\label{prop:moy.i} $\rho_\chi$ is orthogonal if and only if~$\chi|_{L^\times}$ is trivial;
\item\label{prop:moy.ii} $\rho_\chi$ is symplectic if and only if~$\chi|_{L^\times}$ is non-trivial.
\end{enumerate}
\end{prop}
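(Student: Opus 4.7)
The plan is to address the self-duality criterion first, then deduce the orthogonal/symplectic dichotomy from a Frobenius-Schur computation, and finally translate the outcome into a condition on $\chi|_{L^\times}$.

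For the self-duality criterion, I would use that induction commutes with duality for characters, so $\rho_\chi^\vee\cong\rho_{\chi^{-1}}$; hence $\rho_\chi$ is self-dual if and only if $\rho_\chi\cong\rho_{\chi^{-1}}$. By Theorem~\ref{thm:moy1}, this is equivalent to conjugacy of the admissible pairs $(E/F,\chi)$ and $(E/F,\chi^{-1})$, i.e., to the existence of some $\sigma\in\Aut_F(E)$ with ${}^\sigma\chi=\chi^{-1}$. To see that $\sigma^2=\Id$, I would pass to the Galois closure $\wt E/F$ and observe that the irreducibility of $\rho_\chi$, which is part of Theorem~\ref{thm:moy1}, implies that the stabilizer of $\chi$ in $\Gal(\wt E/F)/\Gal(\wt E/E)$ is trivial; since $\sigma^2$ fixes $\chi$, this forces $\sigma^2=\Id$, so $L:=E^\sigma$ is an intermediate field with $E/L$ quadratic. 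The converse direction is immediate: given such an $L$ and $\sigma$, any lift $w\in W_L$ of $\sigma$ satisfies ${}^w\chi=\chi^{-1}$ as a character of $W_E$, and the conjugation invariance of induction yields $\rho_{\chi^{-1}}\cong\rho_\chi$.

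For the orthogonal/symplectic dichotomy, I would construct the (up to scalar) unique $W_F$-invariant bilinear form $B$ on $V=\Ind_{E/F}\chi$ and compute its symmetry $B(v_1,v_2)/B(v_2,v_1)\in\{\pm 1\}$. Fixing a lift $w\in W_L\setminus W_E$ of $\sigma$, the form $B$ is built out of the $W_E$-equivariant pairing $\chi\otimes{}^w\chi\to\BC$ (well defined because ${}^w\chi=\chi^{-1}$), and a short calculation with the coset decomposition of $W_F$ over $W_E$ shows that the symmetry equals $\chi(w^2)$; moreover $\chi(w^2)$ is independent of the lift since for $h\in W_E$ one has $(wh)^2=w^2\,\sigma(h)\,h$ and $\chi(\sigma(h)h)=({}^\sigma\chi\cdot\chi)(h)=1$. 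To finish, note that ${}^\sigma\chi=\chi^{-1}$ gives $\chi\circ\N_{E/L}=1$, so $\chi|_{L^\times}$ factors through $L^\times/\N_{E/L}(E^\times)\cong\Gal(E/L)$ by local class field theory, and is therefore either trivial or the non-trivial quadratic character $\eps_{E/L}$. By functoriality of local class field theory, the transfer map $W_L^{\mathrm{ab}}\to W_E^{\mathrm{ab}}$ agrees with the inclusion $L^\times\incl E^\times$ and sends the class of $w$ to the class of $w^2$, while $w\in W_L\setminus W_E$ corresponds to some $\alpha\in L^\times\setminus\N_{E/L}(E^\times)$. Thus $\chi(w^2)=\chi|_{L^\times}(\alpha)$, which equals $+1$ exactly when $\chi|_{L^\times}$ is trivial and $-1$ when $\chi|_{L^\times}=\eps_{E/L}$, yielding cases~\ref{prop:moy.i} and~\ref{prop:moy.ii}.

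The hardest step will be the explicit construction and symmetry computation of $B$ in the Weil-group setting: the usual finite-group Frobenius-Schur averaging is unavailable because $\rho_\chi$ generally does not factor through a finite quotient of $W_F$. However, the induced-representation structure reduces everything to the algebra of a coset basis of $V$, so one can carry out the computation of $B(v_2,v_1)$ versus $B(v_1,v_2)$ directly, with the single scalar $\chi(w^2)$ emerging from interchanging $g_1$ and $g_2$ in the coset formula for $B$.
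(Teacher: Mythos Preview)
The paper does not prove this proposition; it is quoted from Moy~\cite{M84} without argument. Your outline is essentially the standard proof and is correct in its main points, but there is one gap: after showing~$\sigma^2=\Id$ you pass directly to~$L=E^\sigma$ with~$E/L$ quadratic, without excluding~$\sigma=\Id$. That case would mean~$\chi=\chi^{-1}$, i.e.\ $\chi$ has order at most~$2$; you need to show this is incompatible with admissibility when~$E\neq F$ and~$p\neq 2$. The argument: since such a~$\chi$ is trivial on the pro-$p$ group~$U_E^{0+}$, condition~(ii) of the definition forces~$E/F$ to be unramified, and a short residue-field calculation then shows every quadratic character of~$E^\times$ factors through a norm from a proper subfield, violating condition~(i).

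A minor imprecision: ``the stabilizer of~$\chi$ in~$\Gal(\wt E/F)/\Gal(\wt E/E)$'' does not parse, as this quotient is only a coset space; what you mean is the stabilizer in~$\Aut_F(E)\cong N_{W_F}(W_E)/W_E$, which is trivial by Mackey's irreducibility criterion for~$\rho_\chi$. The remainder---computing the sign of the invariant form as~$\chi(w^2)$, its independence of the lift, and the class-field-theoretic identification with~$\chi|_{L^\times}(\alpha)$ for~$\alpha\in L^\times\setminus\N_{E/L}(E^\times)$---is correct. For the sign computation you may find it cleaner to factor~$\rho_\chi=\Ind_{W_L}^{W_F}\bigl(\Ind_{W_E}^{W_L}\chi\bigr)$: the inner index-$2$ induction carries an invariant form of sign~$\chi(w^2)$ by the classical formula, and inducing an explicit invariant form from~$W_L$ to~$W_F$ manifestly preserves the sign, so the direct coset manipulation over~$W_F/W_E$ can be avoided.
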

We call an admissible pair~$(E/F,\chi)$ as in Proposition~\ref{prop:moy} \emph{self-dual}, and also call it symplectic/orthogonal according to the parity of~$\rho_\chi$.

We note that, since~$p\ne 2$, the only odd-dimensional self-dual irreducible representations of~$W_F$ are the characters of order dividing~$2$ (all orthogonal). We also observe that there are many even-dimensional self-dual irreducible representations of~$W_F$ of both types. Indeed, suppose~$E/F$ is an extension of degree~$2N$ with an involution~$\sigma$ whose fixed field is~$L$. For~$d\ge 0$ we write~$\overline U_E^{d+}=\{x\in U_E^{d+}\mid \N_{E/L}(x)=1\}$.

Let~$\beta\in E$ be quasi-minimal of depth~$d>0$ such that~$\sigma(\beta)=-\beta$, so that every quasi-character of~$E^\times$ represented by~$\beta$ is admissible. We write~$\Xi_\beta$ for the set of characters of the pro-$p$-group~$U_E^{0+}$ which are represented by~$\beta$, so their restrictions to~$U_E^{(d/2)+}$ are all equal; write~$\psi_\beta$ for this restriction. Then~$\sigma$ acts on~$\Xi_\beta$ by $(\sigma\chi)(x)=\chi(\sigma(x)^{-1})$, and the Glauberman correspondence gives a bijection between the set of fixed points~$\Xi_\beta^\sigma$ and the set of characters of~$\overline U_E^{0+}$ whose restriction to~$\overline U_E^{(d/2)+}$ is~$\psi_\beta|_{\overline U_E^{(d/2)+}}$. Thus~$\Xi_\beta^\sigma$ has size~$[\overline U_E^{0+}:\overline U_E^{(d/2)+}]$, which is always non-zero, and bigger than~$1$ (so at least~$p$) as soon as~$d\ge \frac 2{e_{E/F}}$.

Notice that each~$\chi\in \Xi_\beta^\sigma$ restricts trivially to~$U_L^{0+}$ (since~$p\ne 2$) so we can extend~$\chi$ to~$L^\times U^{0+}_E$ by requiring that~$\chi|_{L^\times}$ is either trivial or the class field character~$\omega_{E/L}$. There are then exactly~$[E^\times:L^\times U^{0+}_E]$ further extensions to a character~$\chi$ of~$E^\times$, and we have~${}^\sigma\chi=\chi^{-1}$ for all such extensions since~$\N_{E/L}(E^\times)\subset L^\times U^{0+}_E$. When~$E/L$ is ramified, there are exactly two such extensions, and their quotient is the unramified character of~$E^\times$ of order~$2$; that is, they differ only by a sign on a uniformizer of~$E$. We summarize this discussion:

\begin{lem}\label{lem:exist}
Let~$E/F$ be an extension of degree~$2N$ with involution~$\sigma$ and let~$\beta\in E$ be quasi-minimal of depth~$d>0$ such that~$\sigma(\beta)=-\beta$. Suppose~$d>\frac 2{e_{E/F}}$. 
\begin{enumerate}
\item\label{lem:exist.i} There are at least~$p$ characters~$\chi$ of~$U_E^{0+}$ which are represented by~$\beta$, coincide on~$U_E^{(1/e_{E/F})+}$ and such that~$\chi^{\sigma}=\chi^{-1}$.
\item\label{lem:exist.ii} For each character as in~\ref{lem:exist.i}, and each choice of parity (symplectic/orthogonal) there are at least two extensions to an admissible character~$\chi$ such that~$\rho_\chi$ is irreducible self-dual of that parity.
\end{enumerate}
\end{lem}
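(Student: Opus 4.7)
The plan is to split the argument into the two parts. For~(i), I would use the Glauberman correspondence already recalled before the statement to control the size of $\Xi_\beta^\sigma$, and then count via a ratio map into the one-dimensional filtration quotient. For~(ii), I would extend characters of $U_E^{0+}$ to $E^\times$ by passing through $L^\times$, verifying the required admissibility and parity statements via Moy's results.

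For part~(i), the Glauberman correspondence guarantees that $\Xi_\beta^\sigma$ is nonempty, so fix any $\chi_0\in\Xi_\beta^\sigma$. If $\chi\in\Xi_\beta^\sigma$ coincides with $\chi_0$ on $U_E^{(1/e_{E/F})+}$ (noting that $U_E^{(d/2)+}\subseteq U_E^{(1/e_{E/F})+}$ because $d>2/e_{E/F}$), then the ratio $\xi=\chi/\chi_0$ factors through the additive group $U_E^{0+}/U_E^{(1/e_{E/F})+}\cong k_E$, via $1+a\pi_E\mapsto\bar a$. Since $\chi^\sigma=\chi^{-1}$ and $\chi_0^\sigma=\chi_0^{-1}$, the ratio satisfies $\xi^\sigma=\xi^{-1}$, which translates to $\xi$ being trivial on the image of $1+\sigma$ acting on $k_E$. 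Conversely, any such $\xi$ produces a valid $\chi\in\Xi_\beta^\sigma$, so the count reduces to enumerating characters of $k_E/(1+\sigma)k_E$. If $E/L$ is unramified, then $\sigma$ acts on $k_E$ as the nontrivial element of $\Gal(k_E/k_L)$, so $(1+\sigma)k_E=\tr_{k_E/k_L}(k_E)=k_L$ and the quotient has order $q_L$. If $E/L$ is ramified, choose $\pi_E$ with $\sigma(\pi_E)=-\pi_E$; since $k_E=k_L$ the induced action on the quotient is $a\mapsto -a$, so $(1+\sigma)k_E=0$ and the quotient is $k_E$, again of order $q_L$. In either case one obtains $q_L\ge p$ such characters, which gives~(i).

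For part~(ii), the observation recalled immediately before the lemma shows that each $\chi\in\Xi_\beta^\sigma$ restricts trivially to $U_L^{0+}$, since its values on the $\sigma$-fixed subgroup lie in $\{\pm 1\}$ and $U_L^{0+}$ is pro-$p$ with $p$ odd. Hence one can extend $\chi$ to $L^\times U_E^{0+}$ by specifying $\chi|_{L^\times}$ to be trivial (orthogonal case) or the tamely ramified quadratic class field character $\omega_{E/L}$ (symplectic case); by Proposition~\ref{prop:moy} this determines the parity of $\rho_\chi$. It then suffices to count extensions to $E^\times$, which is $[E^\times:L^\times U_E^{0+}]$: this equals~$2$ when $E/L$ is ramified (as the paper recalls explicitly), and equals $(q_E-1)/(q_L-1)=q_L+1\ge 3$ when $E/L$ is unramified, computed from $E^\times/U_E^{0+}\cong\BZ\times k_E^\times$ together with $L^\times U_E^{0+}/U_E^{0+}\cong\BZ\times k_L^\times$. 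Each such extension is admissible, since $\chi|_{U_E^{0+}}$ is represented by the quasi-minimal $\beta$ (ruling out descent from a proper subfield), so Theorem~\ref{thm:moy1} gives irreducibility of $\rho_\chi$ and Proposition~\ref{prop:moy} gives self-duality of the prescribed parity.

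The main place requiring care is treating the ramified and unramified cases of $E/L$ in parallel: the $\sigma$-module structure on $U_E^{0+}/U_E^{(1/e_{E/F})+}$, which drives the count in~(i), and the index $[E^\times:L^\times U_E^{0+}]$, which drives the count in~(ii), look genuinely different in the two situations but happen to give matching lower bounds. The hypothesis $d>2/e_{E/F}$ is exactly what ensures $U_E^{(d/2)+}\subseteq U_E^{(1/e_{E/F})+}$, so that the ratio $\chi/\chi_0$ in~(i) factors all the way down to a character of $k_E$ rather than of a deeper finite quotient.
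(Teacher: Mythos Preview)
Your argument is close to the paper's own (which is the discussion immediately preceding the lemma), with one genuine improvement and one small gap.

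For part~(i), the paper's discussion only establishes $|\Xi_\beta^\sigma|=[\overline U_E^{0+}:\overline U_E^{(d/2)+}]\ge p$ via Glauberman, without explicitly addressing the extra requirement that the $p$ characters coincide on the \emph{larger} group $U_E^{(1/e_{E/F})+}$. Your ratio argument --- fixing one $\chi_0\in\Xi_\beta^\sigma$ and counting characters of $k_E$ trivial on $(1+\sigma)k_E$ --- handles exactly this, and your case split on $E/L$ ramified versus unramified is correct (both give $q_L\ge p$). So your treatment of~(i) is in fact tighter than what the paper writes.

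For part~(ii), there is a missing step. You extend $\chi$ from $U_E^{0+}$ to $L^\times U_E^{0+}$ (where ${}^\sigma\chi=\chi^{-1}$ holds by construction) and then count the $[E^\times:L^\times U_E^{0+}]$ further extensions to $E^\times$; but you then invoke Proposition~\ref{prop:moy} for self-duality and parity without checking that these further extensions still satisfy ${}^\sigma\chi=\chi^{-1}$ on all of $E^\times$. This is precisely the line the paper supplies: for $x\in E^\times$ one has $(\chi\cdot{}^\sigma\chi)(x)=\chi(\N_{E/L}(x))$, and since $\chi|_{L^\times}$ is either trivial or $\omega_{E/L}$ it is trivial on $\N_{E/L}(E^\times)$. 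Without this, your appeal to Proposition~\ref{prop:moy} is not justified. (Two minor asides: $\omega_{E/L}$ is unramified when $E/L$ is, so ``tamely ramified'' is inaccurate there; and the inclusion $U_E^{(d/2)+}\subseteq U_E^{(1/e_{E/F})+}$ already holds for $d\ge 2/e_{E/F}$, so the strict inequality in the hypothesis is not ``exactly'' what is needed for that step.)
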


We now turn to determinants. For~$E/F$ a finite extension, we define the character~$\varkappa_{E/F}$ of~$W_F$ (which we also identify as a character of~$F^\times$) by 
\[
\varkappa_{E/F} = \det(\Ind_{E/F} \boI_{E^\times}),
\]
where~$\boI_{E^\times}$ denotes the trivial character. According to~\cite[\S29.2]{BH06} this character has order dividing~$2$, and when~$E/F$ is quadratic it is precisely the associated quadratic class field character trivial on~$\N_{E/F}(E^\times)$ (see~\cite[34.3~Proposition]{BH06}). More generally, if~$E\supseteq L\supseteq F$ an intermediate field then, by~\cite[(10.1.3)]{BF83}, we have
\begin{equation}\label{eqn:BF}
\varkappa_{E/F} = \varkappa_{L/F}^{[E:L]} \cdot \varkappa_{E/L}|_{F^\times}
\end{equation}
For~$(E/F,\chi)$ an admissible pair, we write~$\omega_\chi=\det\rho_\chi$ for the determinant of the induced representation. We then have, as a special case of~\cite[29.2~Proposition]{BH06}:

\begin{lem}\label{lem:det}
For~$(E/F,\chi)$ an admissible pair we have
\[
\omega_\chi = \varkappa_{E/F} \cdot \chi|_{F^\times}.
\]
In particular,~$\rho_\chi$ has trivial determinant if and only if~$\chi|_{F^\times}=\varkappa_{E/F}$. 
\end{lem}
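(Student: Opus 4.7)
The plan is essentially to invoke directly the general formula of Bushnell--Henniart for the determinant of an induced representation and then unwind what the transfer map becomes under local class field theory.

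First, I would recall the statement of \cite[29.2~Proposition]{BH06}: for a finite separable extension $E/F$ and a smooth character $\chi$ of $W_E$, one has
\[
\det\bigl(\Ind_{W_E}^{W_F}\chi\bigr) \;=\; \varkappa_{E/F}\cdot(\chi\circ V_{F/E}),
\]
where $V_{F/E}\colon W_F^{\mathrm{ab}}\to W_E^{\mathrm{ab}}$ is the group-theoretic transfer (Verlagerung). Applied to the admissible pair $(E/F,\chi)$, the left-hand side is exactly $\omega_\chi=\det\rho_\chi$, so the claim reduces to identifying $\chi\circ V_{F/E}$ with $\chi|_{F^\times}$ as characters of $W_F$.

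Next, I would use the compatibility of the transfer with local class field theory: under the reciprocity isomorphisms $F^\times\simeq W_F^{\mathrm{ab}}$ and $E^\times\simeq W_E^{\mathrm{ab}}$ (which are the isomorphisms implicit throughout \S\ref{llp}), the transfer $V_{F/E}$ corresponds to the natural inclusion $F^\times\hookrightarrow E^\times$ (see, e.g., \cite[\S29.2]{BH06}). Consequently $\chi\circ V_{F/E}$, viewed as a character of $F^\times$, is nothing other than the restriction $\chi|_{F^\times}$. Combined with the previous display, this gives
\[
\omega_\chi=\varkappa_{E/F}\cdot\chi|_{F^\times},
\]
which is the first assertion.

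For the ``in particular'' part, I would note that $\varkappa_{E/F}$ has order dividing $2$ (as already observed just before the statement), so $\varkappa_{E/F}=\varkappa_{E/F}^{-1}$. Hence $\omega_\chi$ is trivial if and only if $\chi|_{F^\times}=\varkappa_{E/F}^{-1}=\varkappa_{E/F}$. There is really no obstacle here: the whole content of the lemma is packaged inside the cited Bushnell--Henniart formula, and the only small thing to verify is the identification of the transfer via local class field theory, which is standard.
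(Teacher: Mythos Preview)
Your proposal is correct and is exactly the approach the paper takes: the paper simply records the lemma as ``a special case of~\cite[29.2~Proposition]{BH06}'' with no further argument, and your write-up just unpacks that citation (including the standard identification of the Verlagerung with the inclusion $F^\times\hookrightarrow E^\times$ under local class field theory).
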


We can be more specific about the value of the character~$\varkappa_{E/F}$, from the calculations in~\cite[\S10]{BF83}.

\begin{lem}[{\cite[Propositions~10.1.5,~10.1.6]{BF83}}]\label{lem:BF}
Let~$E/F$ be a tame extension of degree~$N$.
\begin{enumerate}
\item\label{lem:BF.i} If~$E/F$ is unramified then~$\varkappa_{E/F}$ is trivial if~$N$ is odd and unramified nontrivial if~$N$ is even.
\item\label{lem:BF.ii} If~$E/F$ is totally ramified then:
\begin{enumerate}
\item\label{lem:BF.ii.a} if~$N$ is odd then~$\varkappa_{E/F}$ is unramified, and is trivial if and only if the Jacobi symbol~$\binom qN = 1$;
\item\label{lem:BF.ii.b} if~$N$ is even then there is a unique intermediate field~$E\supset L\supseteq F$ with~$E/L$ quadratic, and then~$\varkappa_{E/F}=\varkappa_{E/L}|_{F^\times}$ is a non-trivial ramified character.
\end{enumerate}
\end{enumerate}
\end{lem}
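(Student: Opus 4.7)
The approach rests on the identity $\varkappa_{E/F}(g)=\sgn(g|_{X})$ for $g\in W_F$, where $X=W_F/W_E$ is the finite $W_F$-set of size $N$ on which $\Ind_{E/F}\boI_{E^\times}$ is the permutation representation (the determinant of a permutation matrix equals its signature). The problem therefore reduces to analysing the cycle structures of the permutations of $X$ induced by various elements of $W_F$, case by case.

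For part~(i), $E\subseteq F^{\ur}$ gives $I_F\subseteq W_E$, so inertia acts trivially on $X$ and $\varkappa_{E/F}$ is unramified; a Frobenius $\Frob_F$ acts on $X$ as a single $N$-cycle, generating the $\Gal(E/F)$-action, and its signature $(-1)^{N-1}$ gives the dichotomy in~(i).

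For part~(ii)(a), since $f_{E/F}=1$ we have $W_F=I_F\cdot W_E$, so $I_F$ acts transitively on $X$ with stabiliser $I_E$, giving $X\cong I_F/I_E$ as $I_F$-sets. As $p\nmid N$, this quotient is cyclic of order $N$ (a quotient of the procyclic tame inertia), so a generator acts as an $N$-cycle of signature $+1$ for $N$ odd, proving that $\varkappa_{E/F}$ is unramified. For the Frobenius value I would fix a topological generator $t$ of tame inertia and use $\Frob_F\, t\, \Frob_F^{-1}=t^q$ to identify the action of $\Frob_F$ on $I_F/I_E\cong\BZ/N\BZ$ with an affine map $a\mapsto qa+c$. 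For $N$ odd, every translation on $\BZ/N\BZ$ is an even permutation (each cycle has odd length dividing $N$), so the signature equals that of multiplication by $q$, which by Zolotarev's lemma is the Jacobi symbol $\binom{q}{N}$; hence $\varkappa_{E/F}(\Frob_F)=\binom{q}{N}$.

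For part~(ii)(b), I would invoke the classification of tame totally ramified extensions to write $E=F(\pi)$ with $\pi^N$ a uniformizer of $F$, exhibiting $L=F(\pi^2)$ as the unique intermediate field of index~$2$. Formula~(\ref{eqn:BF}) then collapses to $\varkappa_{E/F}=\varkappa_{L/F}^{2}\cdot\varkappa_{E/L}|_{F^\times}=\varkappa_{E/L}|_{F^\times}$, since $\varkappa_{L/F}$ has order dividing~$2$. For the totally tamely ramified quadratic $E/L$ (using $p$ odd), a direct norm computation shows that $\N_{E/L}(E^\times)\cap U_L$ equals the squares in $U_L$, so local class field theory identifies $\varkappa_{E/L}|_{U_L}$ with the nontrivial quadratic character of $U_L/U_L^{1}\cong k_L^\times$ (the Legendre symbol on $k_L$). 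Since $L/F$ is totally ramified, $k_L=k_F$, and restricting further to $U_F$ yields the Legendre symbol on $k_F^\times$, which is nontrivial; hence $\varkappa_{E/F}$ is a nontrivial ramified quadratic character. The main obstacle will be justifying the uniqueness of~$L$ when $\mu_N\not\subseteq F$ (so $E/F$ need not be Galois), which requires a small argument about $F$-subalgebras of~$E$; the correct tracking of the affine action of $\Frob_F$ on $I_F/I_E$ in~(ii)(a) is slightly delicate as well, but once set up it reduces cleanly to Zolotarev's lemma.
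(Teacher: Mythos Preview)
The paper does not give its own proof of this lemma; it is quoted directly from \cite[Propositions~10.1.5,~10.1.6]{BF83}. Your direct argument via the signature identity $\varkappa_{E/F}(g)=\sgn(g\mid W_F/W_E)$ is correct and is in fact essentially the computation carried out in the cited reference, with Zolotarev's lemma (in its form for composite odd moduli) supplying the Frobenius value in~(ii)(a). The parenthetical ``using $p$ odd'' in~(ii)(b) is automatic here, since tameness and~$2\mid N$ already force~$p\ne 2$.

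Your flagged concern about the uniqueness of~$L$ in~(ii)(b) is legitimate but resolves cleanly once one works in the tame quotient. Writing $\Gal(F^t/F)=\overline{\langle t,\sigma\rangle}$ with $\sigma t\sigma^{-1}=t^q$, one has $\Gal(F^t/E)=\overline{\langle t^N,\,t^c\sigma\rangle}$ for some~$c$; any subgroup~$H$ with $\Gal(F^t/E)\subseteq H$ and $[\Gal(F^t/F):H]=N/2$ must contain~$t^c\sigma$ and must meet the procyclic tame inertia $\overline{\langle t\rangle}$ in its unique closed subgroup of index~$N/2$, namely $\overline{\langle t^{N/2}\rangle}$. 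This forces $H=\overline{\langle t^{N/2},\,t^c\sigma\rangle}$, hence~$L$ is unique.
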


\begin{cor}
Suppose~$E/F$ is tamely ramified of even degree~$2N$. If~$\varkappa_{E/F}$ is trivial then both~$e_{E/F}$ and~$f_{E/F}$ are even. In particular, if~$N$ is odd then~$\varkappa_{E/F}$ is non-trivial.
\end{cor}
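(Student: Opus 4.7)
The plan is to establish the contrapositive of the first assertion: if $e:=e_{E/F}$ or $f:=f_{E/F}$ is odd, then $\varkappa_{E/F}$ is non-trivial. Let $K$ denote the maximal unramified subextension of $E/F$, so $[K:F]=f$ and $E/K$ is totally ramified of degree $e$, with $ef=2N$. Applying \eqref{eqn:BF} to the tower $F\subseteq K\subseteq E$ gives
\[
\varkappa_{E/F} = \varkappa_{K/F}^{\,e} \cdot \varkappa_{E/K}|_{F^\times}.
\]
Since $ef=2N$ is even, the sub-case where both $e$ and $f$ are odd is excluded, leaving two mixed-parity cases to handle.

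In the first case, when $e$ is odd and $f$ is even, Lemma~\ref{lem:BF}(i) identifies $\varkappa_{K/F}$ with the unique non-trivial unramified quadratic character of $F^\times$, and since $e$ is odd we have $\varkappa_{K/F}^{\,e}=\varkappa_{K/F}$. By Lemma~\ref{lem:BF}(ii)(a), $\varkappa_{E/K}$ is unramified, and it is trivial precisely when $\bigl(\tfrac{q_K}{e}\bigr)=1$; but $q_K=q^f$ is a perfect square (as $f$ is even) and coprime to $e$ (since $E/F$ is tame), so this Jacobi symbol equals $1$ and $\varkappa_{E/K}$ is trivial. Hence $\varkappa_{E/F}=\varkappa_{K/F}\ne 1$.

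The case $e$ even, $f$ odd is the one I expect to be the main obstacle. Here $\varkappa_{K/F}=1$ by Lemma~\ref{lem:BF}(i) (unramified of odd degree), so $\varkappa_{E/F}=\varkappa_{E/K}|_{F^\times}$. Lemma~\ref{lem:BF}(ii)(b) produces a unique intermediate $L$ with $E\supset L\supseteq K$ and $E/L$ quadratic, satisfying $\varkappa_{E/K}=\varkappa_{E/L}|_{K^\times}$; thus $\varkappa_{E/F}=\varkappa_{E/L}|_{F^\times}$. Since $L/K$ is totally ramified (being contained in the totally ramified $E/K$), one has $f_{L/F}=f$. The extension $E/L$ is tame ramified quadratic (using $p\ne 2$), so the conductor-discriminant formula shows $\varkappa_{E/L}$ has conductor $\CP_L$, and hence $\varkappa_{E/L}|_{U_L}$ factors through $U_L/U_L^1\cong k_L^\times$ as the Legendre character $x\mapsto x^{(q_L-1)/2}$. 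A short residue-field computation using $q_L-1=(q-1)(1+q+\cdots+q^{f-1})$, together with the fact that each $q^i$ is odd so that the second factor is $\equiv f\pmod 2$, shows that the restriction of this Legendre character to $k_F^\times$ is non-trivial iff $f$ is odd; thus $\varkappa_{E/F}$ is non-trivial in this case as well. The \emph{in particular} statement is then immediate: if $N$ is odd then $2N$ has exactly one factor of $2$, so $e$ and $f$ cannot both be even, forcing $\varkappa_{E/F}\ne 1$.
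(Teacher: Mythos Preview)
Your proof is correct and follows essentially the same approach as the paper's: contrapositive argument via the maximal unramified subextension~$K$, splitting into the two mixed-parity cases for~$(e,f)$, and in the case~$f$ odd reducing to the residue-field computation that the Legendre character of~$k_K^\times$ restricts non-trivially to~$k_F^\times$ precisely when~$f$ is odd. The paper phrases this last step as ``an element of~$\mu'_F$ is a square if and only if it is a square in~$\mu'_K$'' and works directly with~$\varkappa_{E/K}$ rather than passing further to~$\varkappa_{E/L}$, but since~$k_L=k_K$ your extra step is harmless and the arguments coincide.
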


\begin{proof}
We prove the contrapositive. Let~$K/F$ be the maximal unramified subextension in~$E$. If~$E/K$ has odd degree then~$K/F$ has even degree so~$q_K$ is a square and~$\varkappa_{E/K}$ is trivial by Lemma~\ref{lem:BF}\ref{lem:BF.ii.a}. But then~\eqref{eqn:BF} implies~$\varkappa_{E/F} = \varkappa_{K/F}$, which is non-trivial unramified by Lemma~\ref{lem:BF}\ref{lem:BF.i}. 

If~$K/F$ has odd degree, so~$E/K$ has even degree, then an element of~$\mu'_F$ is a square if and only if it is a square in~$\mu'_K$. But~$\varkappa_{E/K}$ is the non-trivial quadratic character on~$\mu'_K$, by Lemma~\ref{lem:BF}\ref{lem:BF.ii.b}, so~$\varkappa_{E/F} = \varkappa_{E/K}|_{F^\times}$ is then also a non-trivial ramified quadratic character.
\end{proof}

As an immediate corollary of this, with Proposition~\ref{prop:moy}\ref{prop:moy.i} and Lemma~\ref{lem:det}, we have:

\begin{cor}\label{cor:oddO}
Let~$(E/F,\chi)$ be an admissible pair of degree~$2N$ such that the irreducible representation~$\rho_\chi$ is orthogonal. If~$N$ is odd then~$\omega_\chi$ is non-trivial. 
\end{cor}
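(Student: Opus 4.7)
The plan is to combine the previous results to reduce the statement about $\omega_\chi$ to the non-triviality of~$\varkappa_{E/F}$.

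First I would use Proposition~\ref{prop:moy}\ref{prop:moy.i}: since~$\rho_\chi$ is orthogonal, there exists an intermediate field~$E \supset L \supseteq F$ such that~$E/L$ is quadratic and~$\chi|_{L^\times}$ is trivial. In particular, since~$F\subseteq L$, the restriction~$\chi|_{F^\times}$ is trivial as well. Note also that~$[L:F]=N$, since~$[E:F]=2N$ and~$[E:L]=2$.

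Next, by Lemma~\ref{lem:det}, we have
\[
\omega_\chi = \varkappa_{E/F}\cdot \chi|_{F^\times} = \varkappa_{E/F},
\]
using that~$\chi|_{F^\times}$ is trivial. So the problem reduces to showing that~$\varkappa_{E/F}$ is non-trivial. But~$E/F$ is tamely ramified of even degree~$2N$ with~$N$ odd, so the corollary immediately preceding this one applies and yields~$\varkappa_{E/F}\ne \boI$. Therefore~$\omega_\chi$ is non-trivial, as required.

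There is no real obstacle here: the statement is an immediate bookkeeping consequence chaining Proposition~\ref{prop:moy}\ref{prop:moy.i} (parity criterion in terms of~$\chi|_{L^\times}$), Lemma~\ref{lem:det} (formula for~$\omega_\chi$), and the preceding corollary on~$\varkappa_{E/F}$. The only slight care needed is to observe that the hypotheses of the preceding corollary are met, namely that~$E/F$ is tame of even degree~$2N$ (automatic from admissibility, since~$p\ne 2$), so that the odd~$N$ case applies verbatim.
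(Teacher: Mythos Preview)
Your proof is correct and follows exactly the approach the paper indicates: the paper states this corollary as an immediate consequence of Proposition~\ref{prop:moy}\ref{prop:moy.i}, Lemma~\ref{lem:det}, and the preceding corollary on~$\varkappa_{E/F}$, and you have simply spelled out how these three ingredients combine.
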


On the other hand, we can find totally ramified orthogonal admissible pairs~$(E/F,\chi)$ of degree~$2N$ whose determinant is either of the two ramified quadratic characters, regardless of the parity of~$N$. Indeed, suppose~$\omega$ is a ramified quadratic character of~$F^{\times}$ and choose a uniformizer~$\varpi_F$ such that~$\omega(\varpi_F)=1$. We put~$L=F(\root N\of{\varpi_F})$ and set~$\varpi_L=-\root N\of \varpi_F$, and then take~$E=L(\sqrt{\varpi_L})$. Then~$\varkappa_{E/L}$ is trivial on~$\N_{E/L}(\sqrt{\varpi_L})=-\varpi_L$ so is also trivial on~$\varpi_F=(-\varpi_L)^N$. Since its restriction to~$F^\times$ is a non-trivial ramified quadratic character, by Lemma~\ref{lem:BF}\ref{lem:BF.ii.b}, we must have~$\varkappa_{E/L}|_{F^\times}=\omega$. Now if~$(E/F,\chi)$ is any orthogonal admissible pair, we have~$\omega_\chi=\varkappa_{E/F}=\varkappa_{E/L}|_{F^\times}=\omega$.

\section{Classical Groups}\label{classicalgroupssection}
In this section, we prove the results about classical groups explained in the introduction. We recall first the local Langlands correspondence for split classical groups and observe how the local converse theorem follows from it and the unicity of generic representations in a packet. We assume that~$p$ is odd and, for~$N\ge 1$, write~$G_N$ for one of the groups~$\Sp_{2N}(F),\SO_{2N}(F),\SO_{2N+1}(F)$; in the case of~$\SO_{2N}(F)$ we further assume~$N\ge 2$. We also set~$G_N^+=\OO_{2N}(F)$ when~$G_N=\SO_{2N}(F)$, and~$G_N^+=G_N$ otherwise. We also fix a Whittaker datum $\xi$ for~$G_N$, so that \emph{generic} will always mean relative to this fixed datum; if we wish to specify the datum then we will write~\emph{$\xi$-generic}.

We write~$N^*$ for the dimension of the standard representation of the dual group~$\vG_N(\BC)$ of~$G_N$, so that
\[
N^*=\begin{cases} 2N &\text{ if }G_N=\SO_{2N}(F)\text{ or }\SO_{2N+1}(F), \\
2N+1 &\text{ if }G_N=\Sp_{2N}(F), \end{cases}
\]
and write~$\rhostd:\vG_N(\BC)\hookrightarrow \GL_{N^*}(\BC)$ for the standard representation.

We say that parameters~$\varphi_1,\varphi_2:\WD_F\to\vG_N(\BC)$ are \emph{outer equivalent} if~$\rhostd\circ\varphi_1\cong\rhostd\circ\varphi_2$. Note that this is, a priori, weaker than saying that they are locally conjugate (in the sense of the introduction); in fact, in the case of symplectic and odd orthogonal groups, it is the same as saying that~$\varphi_1,\varphi_2$ are conjugate, while for even orthogonal groups it is the same as saying they are conjugate in the full orthogonal group~$\OO_{2N}(\BC)$.
We write~$\Phi(G_N)$ for the set of (conjugacy classes of) Langlands parameters for~$G_N$, and~$\overline\Phi(G_N)$ for the set of outer equivalence classes. 

Similarly, as in the introduction, we say that irreducible representations~$\pi_1,\pi_2$ of~$G_N$ are \emph{outer conjugate} if there is an element~$g\in G_N^+$ such that~${}^g\pi_1\cong\pi_2$, and we write~$\overline\Irr(G_N)$ for the set outer conjugacy classes. Then, by the main local result of Arthur in~\cite{A15}, we have 
a canonical surjective map on the set of outer conjugacy classes of tempered irreducible representations
\begin{align*}
\oIrrtemp(G_N) &\to \oPhitemp(G_N) \\
[\pi] &\mapsto [\varphi_\pi]
\end{align*}
with finite fibres, where we write~$\varphi_\pi$ for some representative of the class.

On the other hand, if~$\pi$ is a generic irreducible representation of~$G_N$ then Cogdell--Kim--Piatetski-Shapiro--Shahidi constructed its \emph{transfer}~$\Pi$ to~$\GL_{N^*}(F)$ (see~\cite[Proposition~7.2]{CKPSS}), which is a self-dual irreducible representation such that
\[
\gamma(s, \Pi\times \tau, \psi) = \gamma(s, \pi\times \tau, \psi), 
\]
for all irreducible supercuspidal representations~$\tau$ of~$\GL_r(F)$ with~$r \ge 1$. Moreover, by a result of Henniart (see~\cite[Appendix B]{AHKO}), 
this transfer coincides with the Arthur transfer whenever~$\pi$ is supercuspidal generic: that is, the Langlands parameter of~$\GL_{N^*}(F)$ corresponding to~$\Pi$ is~$\rhostd\circ\varphi_\pi$ (which is independent of the choice of representative~$\varphi_\pi$). It also follows from this that the local Langlands correspondence for~$G_N$ preserves twisted~$\gamma$-factors when~$\pi$ is supercuspidal generic:
\[
\gamma(s, \pi\times \tau, \psi) = \gamma(s,\varphi_\pi\times\tau,\psi),
\]
where the~$\gamma$-factor on the RHS is~$\gamma(s,(\rhostd\circ\varphi_\pi)\otimes\tau,\psi)$.

\begin{lem}\label{lem:conjGNGL}
Let $\pi_1, \pi_2$ be generic supercuspidal representations of~$G_N$, with transfers~$\Pi_1,\Pi_2$ to~$\GL_{N^*}(F)$. If~$\Pi_1\cong\Pi_2$ then~$\pi_1,\pi_2$ are outer conjugate.
\end{lem}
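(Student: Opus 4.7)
The plan is to translate the hypothesis about transfers into a statement about Langlands parameters, then apply Arthur's local Langlands correspondence together with the unicity of generic representations in tempered $L$-packets.

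First, I would use the identification noted immediately above the lemma: since each $\pi_i$ is a generic supercuspidal representation of $G_N$, its transfer $\Pi_i$ to $\GL_{N^*}(F)$ corresponds, under the local Langlands correspondence for $\GL_{N^*}(F)$, to the parameter $\rhostd \circ \varphi_{\pi_i}$. The hypothesis $\Pi_1 \cong \Pi_2$ is therefore equivalent to an equivalence $\rhostd \circ \varphi_{\pi_1} \cong \rhostd \circ \varphi_{\pi_2}$ of $\GL_{N^*}(\BC)$-valued representations of $\WD_F$, which by definition is precisely the statement that $\varphi_{\pi_1}$ and $\varphi_{\pi_2}$ are outer equivalent, that is, $[\varphi_{\pi_1}] = [\varphi_{\pi_2}]$ in $\oPhitemp(G_N)$.

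Next, I would invoke Arthur's local Langlands correspondence to see that, in the well-defined canonical map
\[
\oIrrtemp(G_N) \longrightarrow \oPhitemp(G_N), \qquad [\pi] \longmapsto [\varphi_\pi],
\]
the classes $[\pi_1]$ and $[\pi_2]$ lie in a common fibre. Picking representative parameters, either $\varphi_{\pi_1}$ and $\varphi_{\pi_2}$ are already $\vG_N(\BC)$-conjugate (so $\pi_1,\pi_2$ lie in the same $L$-packet), or $G_N = \SO_{2N}(F)$ and they are swapped by the non-trivial outer automorphism coming from $\OO_{2N}(\BC)$ (so $\pi_1,\pi_2$ lie in $L$-packets interchanged by a representative of this outer automorphism in $G_N^+ = \OO_{2N}(F)$).

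Finally, I would apply the unicity of $\xi$-generic representations in each tempered $L$-packet, which is part of the ``suitable local Langlands correspondence'' assumed throughout the paper. In the first case above this immediately forces $\pi_1 \cong \pi_2$, while in the second (which can only occur for $\SO_{2N}$) it forces $\pi_2$ to be obtained from $\pi_1$ by conjugation by an element of $G_N^+$ realising the outer automorphism. In either case $\pi_1$ and $\pi_2$ are outer conjugate, as desired.

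The one slightly delicate point, and in my view the main obstacle, is the second step: precisely matching the notion of outer equivalence of parameters with the behaviour of $\xi$-generic elements under outer conjugation of representations in the $\SO_{2N}$ case, since the outer automorphism of $\SO_{2N}$ may permute Whittaker data. This compatibility, however, is built into Arthur's formulation of the correspondence and its normalisation relative to the Whittaker datum $\xi$, so the lemma follows once this is invoked.
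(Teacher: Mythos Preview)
Your proposal is correct and follows essentially the same line as the paper's proof: translate~$\Pi_1\cong\Pi_2$ into outer equivalence of the parameters~$\varphi_{\pi_1},\varphi_{\pi_2}$, then use uniqueness of the generic member to conclude. The only difference is packaging: where you unpack the uniqueness step into ``unicity of the $\xi$-generic representation in a tempered $L$-packet'' plus the compatibility of outer automorphisms with Whittaker data in the~$\SO_{2N}$ case, the paper simply invokes a result of Varma~\cite[Corollary~6.16]{V17}, which says directly that there is a unique outer conjugacy class of generic irreducible representations corresponding to~$[\varphi_{\pi_1}]$. The ``delicate point'' you flag is exactly what Varma's result absorbs.
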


\begin{proof}
Since~$\Pi_1\cong\Pi_2$, the local Langlands correspondence for~$\GL_{N^*}(F)$ implies that~$\rhostd\circ\varphi_{\pi_1}\cong\rhostd\circ\varphi_{\pi_2}$. Thus~$\varphi_{\pi_1},\varphi_{\pi_2}$ are outer equivalent. Now a result of Varma~\cite[Corollary~6.16]{V17}
says that there is a unique outer conjugacy class of generic irreducible representations of~$G_N$ corresponding to~$[\varphi_{\pi_1}]$, so~$\pi_1,\pi_2$ are outer conjugate. 
\end{proof}

Up to outer conjugation, the Local Converse Theorem for~$G_N$ is now straightforward for supercuspidal representations, and follows for all generic irreducible representations by standard techniques.

\begin{thm}\label{uniformproof}
Let $\pi_1, \pi_2$ be generic irreducible representations of~$G_N$.  If 
\[
\gamma(s, \pi_1 \times \tau, \psi) = \gamma(s, \pi_2 \times \tau, \psi)
\]
for all generic irreducible representations $\tau$ of $\GL_r(F)$, for $1\le r\le N$, then~$\pi_1,\pi_2$ are outer conjugate. 
\end{thm}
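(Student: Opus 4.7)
The plan is to transfer the problem to $\GL_{N^*}(F)$ and invoke the $\GL_{N^*}$ Local Converse Theorem, which carries no central-character hypothesis. Specifically, the Cogdell--Kim--Piatetski-Shapiro--Shahidi transfer sends $\pi_i$ to an irreducible self-dual representation $\Pi_i$ of $\GL_{N^*}(F)$ with
\[
\gamma(s,\Pi_i\times\tau,\psi)=\gamma(s,\pi_i\times\tau,\psi)
\]
for every irreducible supercuspidal $\tau$ of $\GL_r(F)$, and hence for every generic irreducible $\tau$ by multiplicativity. Since $\lfloor N^*/2\rfloor=N$, the hypothesis gives $\gamma(s,\Pi_1\times\tau,\psi)=\gamma(s,\Pi_2\times\tau,\psi)$ for all generic irreducible $\tau$ of $\GL_r(F)$ with $1\le r\le N$. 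The Local Converse Theorem for $\GL_{N^*}$ then yields $\Pi_1\cong\Pi_2$; note that no common-central-character hypothesis is needed on the $\GL$ side, since the central character of $\Pi_i$ is already determined by the $\GL_1$-twisted $\gamma$-factors.

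If $\pi_1,\pi_2$ are supercuspidal this is precisely Lemma~\ref{lem:conjGNGL}. In the general case, apply the Langlands classification to realise each $\pi_i$ as the unique generic constituent of a standard module
\[
\mathrm{Ind}_{P_i}^{G_N}\bigl(\sigma_{i,1}|\det|^{s_{i,1}}\otimes\cdots\otimes\sigma_{i,k_i}|\det|^{s_{i,k_i}}\otimes\tau_i\bigr),
\]
with the $\sigma_{i,j}$ discrete series of general linear groups, $s_{i,1}>\cdots>s_{i,k_i}>0$, and $\tau_i$ a generic tempered representation of a smaller classical group $G_{N'_i}$. Under CKPSS transfer, $\Pi_i$ is the isobaric sum built from the twists $\sigma_{i,j}|\det|^{s_{i,j}}$, their conjugate-duals, together with the transfer $T_i$ of $\tau_i$. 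Uniqueness of the isobaric decomposition (Jacquet--Shalika) applied to $\Pi_1\cong\Pi_2$ forces the $\GL$-data to match and $T_1\cong T_2$; Varma's unicity result for generic representations in a tempered $L$-packet (as invoked in the proof of Lemma~\ref{lem:conjGNGL}) then gives $\tau_1,\tau_2$ outer conjugate, whence $\pi_1,\pi_2$ are outer conjugate.

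The main obstacle is not the central line of argument, which is essentially a transcription of Lemma~\ref{lem:conjGNGL} through the Langlands classification, but rather the careful verification that the CKPSS transfer intertwines the Langlands classification for $G_N$ with the isobaric sum decomposition on the $\GL$ side in the generic (non-supercuspidal, non-tempered) regime, so that the $\GL$-factors of the standard module can be read off from the cuspidal support of $\Pi_i$ and cleanly cancelled.
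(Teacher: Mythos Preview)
Your supercuspidal case is exactly the paper's argument: transfer to~$\GL_{N^*}$, apply the~$\GL_{N^*}$ Local Converse Theorem, then invoke Lemma~\ref{lem:conjGNGL}.

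For the general case you take a different route. The paper does \emph{not} transfer first and then unpack an isobaric decomposition; instead it cites existing reduction-to-supercuspidal arguments (Jiang--Soudry~\cite{JS03}, Jo~\cite{Jo22}, Hazeltine--Liu~\cite{HZ23}) that work entirely on the classical-group side, matching up the~$\GL$-inducing data and reducing to a smaller~$G_{N'}$, and observes that the only reason those papers carried a central-character hypothesis was that their supercuspidal base case did---which you have now removed. This buys the paper a black-box proof with no new verifications.

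Your transfer-first approach is plausible but the obstacle you flag is real and not merely cosmetic. To conclude that~$\tau_1,\tau_2$ are outer conjugate from~$T_1\cong T_2$ you need either (a) the compatibility of CKPSS transfer with Arthur transfer for \emph{tempered} generic representations (the paper, via Henniart's appendix in~\cite{AHKO}, only asserts this for supercuspidals), so that Varma's result applies; or (b) the converse theorem for~$G_{N'}$ already in hand for tempered representations, which is circular unless you are running an induction on~$N'$---and even then you must still treat the tempered case of~$G_N$ itself, which forces you through essentially the same discrete-series and supercuspidal reductions that~\cite{JS03,Jo22,HZ23} carry out. So your route ultimately re-derives or presupposes the content of those cited reductions rather than bypassing them.
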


\begin{rmk} 
\begin{enumerate}
\item We note that our version of the local converse theorem removes the assumption in Zhang~\cite{Z18} and Jo~\cite[Theorem~A]{Jo22} for symplectic groups, and in Hazeltine--Liu~\cite[Theorem~1.2]{HZ23} (for even special orthogonal groups), that the central characters of~$\pi_1,\pi_2$ are equal.
\item Another way of expressing Theorem~\ref{uniformproof} is that the map on irreducible $\xi$-generic representations
\[
\oIrrchigen \to \oPhigen
\]
induced by the local Langlands correspondence is \emph{injective}, where a parameter~$\varphi$ is said to be generic if its adjoint~$L$-function~$L(s,\varphi,\Ad)$ is holomorphic at~$s=1$ (see~\cite[\S2.2]{M24} and~\cite[Proposition~B.1]{GI16}). 
Moreover, as pointed out to us by the referee, it is surjective by Cunningham--Dijols--Fiori--Zhang~\cite[Theorem~0.2(2,3)]{CDFZ25}. One can also compare with Jantzen--Liu~\cite[Theorem~6.20]{JL24}, which proves the surjectivity of the map
\[
\bigcup_{\xi}\oIrrchigen \to \oPhigen,
\]
where the union is taken over all (conjugacy classes of) Whittaker data.
\end{enumerate}
\end{rmk}

\begin{proof}
Firstly, let~$\pi_1,\pi_2$ be generic supercuspidal representations of~$G_N$ satisfying the hypotheses of the theorem and let~$\Pi_1,\Pi_2$ be their transfers to~$\GL_{N^*}(F)$. Then, by the definition of transfer,~$\gamma(s, \Pi_1 \times \tau, \psi) = \gamma(s, \Pi_2 \times \tau, \psi)$, for all generic irreducible representations~$\tau$ of~$\GL_r(F)$ with~$1\le r\le N=\left\lfloor \frac{N^*}2\right\rfloor$, so the Local Converse Theorem for~$\GL_{N^*}(F)$ implies that~$\Pi_1\cong\Pi_2$. The result now follows from Lemma~\ref{lem:conjGNGL}.

The reduction to the supercuspidal case is now done as in Jiang--Soudry for odd special orthogonal groups (see~\cite[Theorem~5.1]{JS03}), Jo for symplectic groups (see~\cite[Proposition~2.10]{Jo22}),  and Hazeltine--Liu for even special orthogonal groups (see~\cite[Theorem~4.3]{HZ23} ), noting that the proofs in the latter two cases only require equality of central characters because their versions of the converse theorem for supercuspidal representations also requires this.
\end{proof}

In the following sections, we will discuss the sharpness of Theorem~\ref{uniformproof} (i.e. whether the condition~$r\le N$ can be improved) and the possibility of refining it to remove the outer conjugation (in the case of special orthogonal groups). For ease of exposition, we introduce the following definition.

\begin{defn}
Let~$\pi_1,\pi_2$ be irreducible generic representations of~$G_N$ or of~$\GL_N(F)$ and let~$m\ge 1$ be an integer. We say that~$\pi_1,\pi_2$ are \emph{$\gamma$-equivalent to level~$m$} if
\[
\gamma(s, \pi_1 \times \tau, \psi) = \gamma(s, \pi_2 \times \tau, \psi)
\]
for all irreducible supercuspidal representations $\tau$ of $\GL_r(F)$, with $1\le r\le m$. We use the same terminology for Langlands parameters also.
\end{defn}

Note again that, by multiplicativity of~$\gamma$-factors, if~$\pi_1,\pi_2$ are $\gamma$-equivalent to level~$m$ then we in fact have~$\gamma(s, \pi_1 \times \tau, \psi) = \gamma(s, \pi_2 \times \tau, \psi)$ for all \emph{generic} irreducible representations~$\tau$ of $\GL_r(F)$, with $1\le r\le m$. Thus the Local Converse Theorem~\ref{uniformproof} says that if generic irreducible representations of~$G_N$ are~$\gamma$-equivalent to level~$N$ then they are outer conjugate. 

\subsection{Sharpness Results}\label{sub:sharp}

Our first sharpness result concerns \emph{non-supercuspidal} representations. For this, recall that we have a map
\[
\iota_{\L}: \GL_{N}(\BC) \hookrightarrow \vG_N(\BC)
\]
by identifying~$\GL_N(\BC)$ with a Siegel Levi subgroup~$\vL(\BC)$ of~$\vG_N(\BC)$. Note that, although this embedding is only important up to conjugacy, there are two non-conjugate choices in the case of~$\SO_{2N}(F)$; we fix one arbitrarily. 

For~$(E/F,\chi)$ an admissible pair of degree~$N$ and~$\rho_\chi=\Ind_{E/F} \chi$, we thus obtain a parameter~$\varphi_\chi=\iota_\L\circ\rho_\chi$ for~$G_N$.  Note that, when we compose this with the standard representation, we get
\[
\rhostd\circ\varphi_\chi \cong \begin{cases} 
\rho_\chi\oplus\rho_{\chi^{-1}} &\text{ if }G_N=\SO_{2N}(F)\text{ or }\SO_{2N+1}(F), \\
\rho_\chi\oplus\boI\oplus \rho_{\chi^{-1}} &\text{ if }G_N=\Sp_{2N}(F). \end{cases}
\]
Note then that, if~$(E'/F,\chi')$ is another admissible pair of degree~$N$ such that~$\varphi_\chi,\varphi_{\chi'}$ define equivalent Langlands parameters for~$G_N$, then either~$\rho_{\chi'}\cong\rho_\chi$ or~$\rho_{\chi'}\cong\rho_{\chi^{-1}}$, so~$(E'/F,\chi')$ is conjugate to either~$(E/F,\chi)$ or~$(E/F,\chi^{-1})$.

\begin{thm}\label{thm:noncusp}
Suppose~$p>N$. Let~$(E/F,\chi),(E/F,\chi')$ be minimal totally ramified admissible pairs of degree~$N$ such that~$\chi,\chi'$ coincide on~$U_E$. Then~$\varphi_\chi,\varphi_{\chi'}$ are~$\gamma$-equivalent to level~$N-1$.
\end{thm}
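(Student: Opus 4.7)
The plan is to unwind the definition of $\gamma$-equivalence for the parameters $\varphi_\chi,\varphi_{\chi'}$ so that the statement reduces to a comparison of $\gamma$-factors of Weil-group representations, and then to apply Proposition~\ref{prop:basic2} with the right setup. By the formula stated just before the theorem,
\[
\rhostd\circ\varphi_\chi \;\cong\; \rho_\chi\oplus\rho_{\chi^{-1}}\quad(\text{or with an extra copy of }\boI\text{ in the symplectic case}),
\]
so for any irreducible supercuspidal~$\tau$ of~$\GL_r(F)$, with Langlands parameter~$\varphi_\tau$, the factor~$\gamma(s,\tau,\psi_F)$ coming from the~$\boI$ summand cancels on both sides. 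Thus the statement is equivalent to
\[
\gamma(s,(\rho_\chi\oplus\rho_{\chi^{-1}})\otimes\varphi_\tau,\psi_F)
\;=\;
\gamma(s,(\rho_{\chi'}\oplus\rho_{\chi'^{-1}})\otimes\varphi_\tau,\psi_F)
\]
for every irreducible supercuspidal~$\tau$ of~$\GL_r(F)$ with $1\le r\le N-1$; by Theorem~\ref{thm:moy1}, since $r<N<p$, such a~$\tau$ has an irreducible~$r$-dimensional parameter~$\varphi_\tau$ of~$W_F$.

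To apply Proposition~\ref{prop:basic2}, I would take $I=\{1,2\}$, $E_1=E_2=E$, $(\chi_1,\chi_2)=(\chi,\chi^{-1})$, $(\chi'_1,\chi'_2)=(\chi',\chi'^{-1})$, with representatives $\beta_1=\beta$ and $\beta_2=-\beta$ (since $\chi^{-1}$ is represented by~$-c_\chi$). The pairs are all totally ramified admissible of degree~$N$ and the same depth~$d$, whose least denominator is~$M=N$ by the minimal totally ramified assumption. Since~$p>N$, the proposition allows us to take~$r=N$, and its conclusion for irreducible~$\tau$ of~$W_F$ of dimension less than~$N$ is exactly what we need.

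It remains only to check the three hypotheses of Proposition~\ref{prop:basic2}. For~\ref{prop:basic2.i}: since~$t_i\ge \frac 1N>0$, we have~$U_E^{t_i}\subseteq U_E$, so~$\chi,\chi'$ coinciding on~$U_E$ automatically gives the required agreement, and the same for~$\chi^{-1},\chi'^{-1}$. For~\ref{prop:basic2.ii}: a direct calculation gives
\[
\chi_1(\beta_1)\chi_2(\beta_2)\;=\;\chi(\beta)\,\chi^{-1}(-\beta)\;=\;\chi(-1)^{-1},
\]
and similarly with~$\chi'$; since~$-1\in U_E$ and $\chi,\chi'$ agree on~$U_E$, these scalars are equal. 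For~\ref{prop:basic2.iii}: both products $\chi_1\chi_2=\chi\cdot\chi^{-1}$ and $\chi'_1\chi'_2=\chi'\cdot\chi'^{-1}$ are the trivial character, so their restrictions to~$F^\times$ trivially coincide. There is no real obstacle here; the work has already been done in Proposition~\ref{prop:basic2}, and the only thing that needs to be noted is the happy coincidence that using the pair~$(\chi,\chi^{-1})$ makes conditions~\ref{prop:basic2.ii} and~\ref{prop:basic2.iii} follow from agreement on~$U_E$ alone, with no hypothesis whatsoever on the values at a uniformizer of~$E$.
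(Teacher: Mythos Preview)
Your proof is correct and follows essentially the same approach as the paper: reduce to the equality of $\gamma$-factors for $\rho_\chi\oplus\rho_{\chi^{-1}}$ versus $\rho_{\chi'}\oplus\rho_{\chi'^{-1}}$ (the $\boI$ summand cancelling by multiplicativity), then apply Proposition~\ref{prop:basic2} with $I=\{1,2\}$, $(\chi_1,\chi_2)=(\chi,\chi^{-1})$, $(\chi_1',\chi_2')=(\chi',\chi'^{-1})$ and representatives $\beta,-\beta$, checking the three hypotheses exactly as the paper does. Your verification of~\ref{prop:basic2.ii} via $\chi(-1)^{-1}=\chi'(-1)^{-1}$ is the same as the paper's, since $\chi(-1)^2=1$.
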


There are infinitely many minimal totally ramified admissible pairs of degree~$N$ with a fixed restriction to~$U_E$ (by choosing the value on a uniformizer), so also infinitely many up to equivalence and duality, even if we also require the image to be bounded and the induced representation not to be self-dual. Thus the local Langlands correspondence for~$G_N$ immediately gives us:

\begin{cor}\label{cor:noncusp}
Suppose~$p>N$. Then there are generic irreducible tempered representations~$\pi,\pi'$ of~$G_N$, which are~$\gamma$-equivalent to level~$N-1$ but not outer conjugate.
\end{cor}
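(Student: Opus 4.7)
The plan is to derive the corollary directly from Theorem~\ref{thm:noncusp} combined with the local Langlands correspondence for~$G_N$ recalled at the start of this section. The essential point is that Theorem~\ref{thm:noncusp} produces, from a single choice of admissible restriction to~$U_E$, an entire family of Langlands parameters that are pairwise $\gamma$-equivalent to level~$N-1$, and that this family contains infinitely many inequivalent classes; one then transports the statement to the automorphic side.

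Concretely, I would first fix a totally (tamely) ramified extension~$E/F$ of degree~$N$, which exists because~$p>N\ge 2$, and fix an admissible restriction to~$U_E$ of a character of depth~$d$ with least denominator~$N$, represented by a minimal element. Extensions of this restriction to a character~$\chi$ of~$E^\times$ are parametrized by the value~$\chi(\varpi_E)\in\BC^\times$ on a fixed uniformizer. Restricting to values on the unit circle makes the resulting parameter~$\varphi_\chi=\iota_\L\circ\rho_\chi$ tempered. Moreover, by Proposition~\ref{prop:moy}, the representation~$\rho_\chi$ is self-dual only for finitely many choices of~$\chi(\varpi_E)$ up to equivalence, so we may cut down to an infinite family of non-self-dual, bounded characters~$\chi$.

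By Theorem~\ref{thm:noncusp}, any two parameters~$\varphi_\chi,\varphi_{\chi'}$ in this family are $\gamma$-equivalent to level~$N-1$. As recorded in the excerpt immediately before that theorem, the Langlands equivalence~$\varphi_\chi\cong\varphi_{\chi'}$ forces~$(E/F,\chi')$ to be conjugate to~$(E/F,\chi)$ or~$(E/F,\chi^{-1})$, so the family splits into infinitely many equivalence classes of parameters. Each such parameter is generic in the sense of~\cite[\S2.2]{M24}, since the irreducible constituents of~$\rhostd\circ\varphi_\chi$ (namely~$\rho_\chi$ and~$\rho_{\chi^{-1}}$, plus possibly~$\boI$ for~$\Sp_{2N}$) are pairwise distinct and tempered under our bounded/non-self-dual assumptions, which makes the relevant adjoint $L$-function holomorphic at~$s=1$.

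Finally, I would invoke the local Langlands correspondence for~$G_N$ together with Varma's unicity of generic representations in a tempered $L$-packet (exactly as used in Lemma~\ref{lem:conjGNGL}) to attach to each class~$[\varphi_\chi]$ a unique outer conjugacy class of~$\xi$-generic irreducible tempered representations~$[\pi_\chi]$. Preservation of twisted $\gamma$-factors under the correspondence then turns the parameter-side equality of Theorem~\ref{thm:noncusp} into the desired automorphic equality, while inequivalence of parameters translates into non-outer-conjugacy of the~$\pi_\chi$. The step most worth double-checking is this last translation, because it rests on the injectivity of the generic LLC on outer classes; however, no new input is needed here beyond the ingredients already assembled for the proof of Theorem~\ref{uniformproof}, so I expect this to be the main (but not serious) obstacle in writing up the argument.
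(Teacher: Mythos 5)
Your proposal is correct and follows essentially the same route as the paper: Theorem~\ref{thm:noncusp} supplies infinitely many outer-inequivalent bounded non-self-dual parameters $\gamma$-equivalent to level~$N-1$ (by varying the value on a uniformizer while fixing the restriction to $U_E$), and the local Langlands correspondence together with unicity of generic members in a tempered packet then transports these to non-outer-conjugate generic tempered representations. The paper's proof is merely a terse version of this, so the main value you add is spelling out the genericity and counting arguments that the paper leaves implicit; note, though, that genericity of these parameters follows simply from boundedness of their image, rather than from the more involved distinct-constituents argument you sketch.
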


\begin{proof}[Proof of Theorem~\ref{thm:noncusp}]
By definition, we have
\[
\gamma(s, \varphi_\chi \times \tau, \psi) = \gamma(s, (\rhostd\circ \varphi_{\chi}) \otimes \tau, \psi),
\]
to which we seek to apply Proposition~\ref{prop:basic2}. In the symplectic case, the component~$\boI$ in~$\rhostd\circ \varphi_{\chi}$ is the same for~$\chi$ and~$\chi'$ so, by multiplicativity of~$\gamma$-factors, in all cases we need only prove
\begin{equation}\label{eqn:noncusp}
\gamma(s, (\rho_\chi\oplus\rho_{\chi^{-1}}) \otimes \tau, \psi) = \gamma(s, (\rho_{\chi'}\oplus\rho_{\chi'^{-1}}) \otimes \tau, \psi).
\end{equation}
Condition~\ref{prop:basic2.i} of Proposition~\ref{prop:basic2} is immediate from the hypotheses, while~\ref{prop:basic2.iii} is also immediate as~$\chi\chi^{-1}=\chi'\chi'^{-1}$ are trivial, so we only need to verify~\ref{prop:basic2.ii}. If~$\chi,\chi'$ are represented by~$\beta$ then~$\chi^{-1},\chi'^{-1}$ are represented by~$-\beta$ so the condition is
\[
\chi(\beta)\chi^{-1}(-\beta)=\chi'(\beta)\chi'^{-1}(-\beta).
\]
This simply says~$\chi(-1)=\chi'(-1)$, which is true by hypothesis. Thus Proposition~\ref{prop:basic2} says that~\eqref{eqn:noncusp} holds for all irreducible representations $\tau$ of $W_F$ of dimension less than~$N$, as required.
\end{proof}

Now we turn to the case of supercuspidal representations, for which we assume that~$N\ge 2$. We will define some Langlands parameters~$\varphi$ via their composition with~$\rhostd$; as previously discussed, this only determines~$\varphi$ up to outer equivalence but this will be sufficient for our purposes here. We write~$M=2\lfloor\frac N2\rfloor$, so that~$M$ is the largest even integer no greater than~$N$. We also fix arbitrarily a self-dual admissible pair~$(E_0/F,\chi_0)$ of degree~$2$ such that~$E_0/F$ is unramified and~$\rho_{\chi_0}$ is of the same parity as~$\vG_N$ (which imposes a constraint on~$\chi_0|_{F^\times}$ according to Proposition~\ref{prop:moy}).

We will consider a pair of self-dual minimal totally ramified admissible pairs $(E_1/F,\chi_1),(E_2/F,\chi_2)$ of degree~$2M$ which induce to give self-dual irreducible representations~$\rho_{\chi_1},\rho_{\chi_2}$ of the same parity as~$\vG_N$ such that
\[
\omega_{\chi_1}\omega_{\chi_2} = \begin{cases} 
\boI &\text{ if~$N$ is even},\\
\omega_{\chi_0} &\text{ if~$N$ is odd},
\end{cases}
\]
where we recall that~$\omega_\chi=\det\rho_\chi$, for~$(E/F,\chi)$ an admissible quasi-character. Note that this condition is vacuous when~$\vG_N$ is symplectic, and we can always achieve it when~$\vG_M$ is orthogonal by the remarks after Corollary~\ref{cor:oddO}. 

Associated to this pair, we define a Langlands parameter~$\varphi_{\chi_1,\chi_2}$ for~$G_N$ (up to outer equivalence) by
\[
\rhostd\circ \varphi_{\chi_1,\chi_2} = \begin{cases} 
\rho_{\chi_1}\oplus\rho_{\chi_2} &\text{ if~$G_N$ is orthogonal and~$N$ is even},\\
\rho_{\chi_0}\oplus\rho_{\chi_1}\oplus\rho_{\chi_2} &\text{ if~$G_N$ is orthogonal and~$N$ is odd},\\
\boI\oplus\rho_{\chi_1}\oplus\rho_{\chi_2} &\text{ if~$G_N$ is symplectic and~$N$ is even},\\
\boI\oplus\rho_{\chi_0}\oplus\rho_{\chi_1}\oplus\rho_{\chi_2} &\text{ if~$G_N$ is symplectic and~$N$ is odd}.
\end{cases}
\]
Recall also from Lemma~\ref{lem:exist} that there are self-dual minimal totally ramified admissible pairs $(E_1/F,\chi'_1),(E_2/F,\chi'_2)$ of the same parity as~$\vG_N$ such that~$\chi_i,\chi'_i$ differ only (by a sign) on a uniformizer. Notice also that, if~$\beta_i$ represents~$\chi_i$, then~$\chi_i(\beta_i)=-\chi'_i(\beta_i)$, since~$\val(\beta_i)$ has least denominator~$2M$ so is an odd power of a uniformizer. Then we have:

\begin{thm}\label{thm:cusp}
Suppose~$p>N$ and set~$M=2\lfloor\frac N2\rfloor$. With the notation as above, suppose that~$(E_1/F,\chi_1)$ is equivalent to neither~$(E_2/F,\chi_2)$ nor~$(E_2/F,\chi_2')$. Then
\begin{enumerate}
\item\label{thm:cusp.i} $\varphi_{\chi_1,\chi_2}$ and~$\varphi_{\chi'_1,\chi'_2}$ are~$\gamma$-equivalent to level~$M-1$;
\item\label{thm:cusp.ii} $\rhostd\circ\varphi_{\chi_1,\chi_2}\not\cong\rhostd\circ\varphi_{\chi'_1,\chi'_2}$.
\end{enumerate}
\end{thm}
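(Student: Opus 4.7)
The plan is to reduce both parts to a comparison of $\rho_{\chi_1}\oplus\rho_{\chi_2}$ with $\rho_{\chi'_1}\oplus\rho_{\chi'_2}$: part~(i) via Proposition~\ref{prop:basic2}, and part~(ii) via uniqueness of decomposition of semisimple representations.

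For part~(i), I would apply Proposition~\ref{prop:basic2} with $I=\{1,2\}$ and $r=M$, which is legitimate since the depth $d$ has least denominator $2M$ and $M\le N<p$. The three numbered conditions should all follow from how $\chi'_i$ was constructed. Condition~\ref{prop:basic2.i} is immediate, since $\chi_i,\chi'_i$ differ only by a sign on a uniformizer and hence agree on the full unit group $U_{E_i}$. For~\ref{prop:basic2.ii}, writing $\chi'_i=\chi_i\eta_i$ with $\eta_i$ the quadratic unramified character of $E_i^\times$, and noting that $\val(\beta_i)=-m/(2M)$ with $m$ odd (being coprime to $2M$), one gets $\eta_i(\beta_i)=-1$ for each $i$, so the two signs cancel in the product. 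For~\ref{prop:basic2.iii}, a uniformizer of $F$ is a $2M$-th power of a uniformizer of $E_i$ up to a unit of $E_i$, so $\eta_i$ is trivial on $F^\times$ and $\chi_i|_{F^\times}=\chi'_i|_{F^\times}$ individually. With Proposition~\ref{prop:basic2} in hand, the $\gamma$-equality for $\rho_{\chi_1}\oplus\rho_{\chi_2}$ against irreducible $\tau$ of dimension less than $M$ is established. To upgrade to $\rhostd\circ\varphi_{\chi_1,\chi_2}$, I would check that the extra summands match in all four cases of the definition: in three of them the extras ($\boI$, $\rho_{\chi_0}$, or their combination) are literally the same, while in the symplectic even case Lemma~\ref{lem:det} together with $\chi_i|_{F^\times}=\chi'_i|_{F^\times}$ gives $\omega_{\chi_1}\omega_{\chi_2}=\omega_{\chi'_1}\omega_{\chi'_2}$. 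Multiplicativity of $\gamma$-factors then finishes~(i).

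For part~(ii), after cancelling the common summands it suffices to show $\rho_{\chi_1}\oplus\rho_{\chi_2}\not\cong\rho_{\chi'_1}\oplus\rho_{\chi'_2}$. Each summand is irreducible by Theorem~\ref{thm:moy1}, so such an isomorphism would force either (a) $\rho_{\chi_i}\cong\rho_{\chi'_i}$ for $i=1,2$, or (b) $\rho_{\chi_1}\cong\rho_{\chi'_2}$ and $\rho_{\chi_2}\cong\rho_{\chi'_1}$. Case~(b) is directly excluded by the standing hypothesis together with Theorem~\ref{thm:moy1}. The main obstacle will be ruling out case~(a), for which I plan a stabilizer argument exploiting minimality: an isomorphism $\rho_{\chi_i}\cong\rho_{\chi'_i}$ yields $\sigma\in\Aut(E_i/F)$ with $\chi_i\circ\sigma=\chi'_i$, and restricting to $U_{E_i}^{(d/2)+}$ and using $\sigma$-invariance of $\tr_{E_i/F}$ translates this to $\sigma(\beta_i)\equiv\beta_i\pmod{\CP_{E_i}^{-d/2}}$. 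Writing $\sigma(\varpi_{E_i})=\zeta\varpi_{E_i}$ with $\zeta^{2M}=1$ and reading off the leading term $c\varpi_{E_i}^{-m}$ of $\beta_i$ (with $\gcd(m,2M)=1$), the congruence forces $\zeta^m=1$ and hence $\zeta=1$, so $\sigma=\mathrm{id}$ and $\chi_i=\chi'_i$, contradicting the sign difference on the uniformizer.
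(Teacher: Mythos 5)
Your proof is correct and part~(i) is essentially the paper's own argument: reduce to the common summands $\rho_{\chi_1}\oplus\rho_{\chi_2}$ vs.\ $\rho_{\chi'_1}\oplus\rho_{\chi'_2}$ and apply Proposition~\ref{prop:basic2}. The paper dispenses with the hypothesis-check as ``clearly satisfied,'' whereas you spell out the three conditions and the cancellation of the extra summands; this is a useful elaboration, though you should note that $d$ may in fact differ between $i=1$ and $i=2$ (the paper even suggests choosing distinct depths to satisfy the inequivalence hypothesis), so one is really invoking Proposition~\ref{prop:basic2} with a common least denominator $2M$ for the two depths rather than a common depth $d$ --- the proof of that proposition goes through unchanged.

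For part~(ii) you take a genuinely different route. After the Krull--Schmidt reduction (identical to the paper's), the paper argues structurally: if some $\sigma\in\Aut(E_1/F)$ carried $\chi_1$ to $\chi'_1$, then $\sigma$ would stabilize the common restriction $\chi_1|_{U_{E_1}^{0+}}$; since that restriction is admissible and $E_1/F$ is totally ramified, it does not factor through any proper norm, so (the prime-to-$p$ argument giving that a $\sigma$-fixed character of a pro-$p$ group factors through $\N_{E_1/E_1^\sigma}$) forces $\sigma=\mathrm{id}$, contradicting $\chi_1\neq\chi'_1$. You instead run a leading-term computation: $\sigma(\beta_1)\equiv\beta_1\pmod{\CP_{E_1}^{-d/2}}$ together with $\sigma(\varpi_{E_1})=\zeta\varpi_{E_1}$ and $\val(\beta_1)$ having least denominator $2M$ forces $\zeta^m=1$, hence $\zeta=1$. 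This is valid but relies on choosing a uniformizer $\varpi_{E_1}$ with $\varpi_{E_1}^{2M}\in F$ (so that there are no higher-order terms in $\sigma(\varpi_{E_1})$) and on minimality of $\beta_1$; you should make the choice of $\varpi_{E_1}$ explicit. The paper's norm argument is slightly more general (only needing totally ramified plus admissibility, not minimality), but your computation is just as effective here and in fact is the same minimality argument that underlies the whole sharpness machinery.
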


Notice that we can easily find examples such that~$(E_1/F,\chi_1)$ is equivalent to neither~$(E_2/F,\chi_2)$ nor~$(E_2/F,\chi_2')$, for example by requiring that~$\chi_1,\chi_2$ have different depths. Again, the local Langlands correspondence for~$G_N$ immediately gives us:

\begin{cor}\label{cor:cusp}
Suppose~$p>N$ and set~$M=2\lfloor\frac N2\rfloor$. Then there are generic supercuspidal representations~$\pi,\pi'$ of~$G_N$, which are~$\gamma$-equivalent to level~$M-1$ but not outer conjugate.
\end{cor}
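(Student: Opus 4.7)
The plan is to use the local Langlands correspondence for split classical groups to convert the statement of Theorem~\ref{thm:cusp} about parameters into the desired statement about representations, exploiting the compatibility of $\gamma$-factors on both sides which is recorded in the excerpt.

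First, I would verify that the parameters $\varphi_{\chi_1,\chi_2}$ and $\varphi_{\chi'_1,\chi'_2}$ are discrete Langlands parameters whose $L$-packets consist of supercuspidal representations of $G_N$. There is no $\SL_2(\BC)$ factor by construction, so I only need multiplicity-freeness of the irreducible constituents of $\rhostd\circ\varphi_{\chi_1,\chi_2}$ (and the analogue for the primed version). This follows because $\rho_{\chi_0}$ has dimension~$2$ while each $\rho_{\chi_i}$ has dimension~$2M\ge 2$, and the hypothesis in Theorem~\ref{thm:cusp} that $(E_1/F,\chi_1)$ is equivalent to neither $(E_2/F,\chi_2)$ nor $(E_2/F,\chi_2')$ guarantees $\rho_{\chi_1}\not\cong\rho_{\chi_2}$. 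Each constituent is self-dual of the correct parity by construction (relative to~$\vG_N$), so we indeed have discrete generic parameters whose associated $L$-packets consist of supercuspidal representations in Arthur's classification.

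Second, I would apply Arthur's local Langlands correspondence \cite{A15}, together with Varma's result \cite[Corollary~6.16]{V17} on the unicity of generic representations in tempered $L$-packets (up to outer conjugation in the $\SO_{2N}$ case), to extract generic supercuspidal representatives $\pi$, $\pi'$ corresponding respectively to~$\varphi_{\chi_1,\chi_2}$ and $\varphi_{\chi'_1,\chi'_2}$. The compatibility of the CKPSS--Henniart transfer with Arthur's parametrization, as recalled in the excerpt, then gives
\[
\gamma(s,\pi\times\tau,\psi) = \gamma(s,\varphi_{\chi_1,\chi_2}\times\tau,\psi),
\]
and similarly for $\pi'$, for every supercuspidal representation~$\tau$ of $\GL_r(F)$. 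Applying Theorem~\ref{thm:cusp}(\ref{thm:cusp.i}) for $r\le M-1$ yields that $\pi$ and $\pi'$ are $\gamma$-equivalent to level~$M-1$.

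Finally, to see that $\pi$ and $\pi'$ are not outer conjugate, I would argue by contrapositive: if they were outer conjugate, then by Lemma~\ref{lem:conjGNGL} their transfers to $\GL_{N^*}(F)$ would be isomorphic, forcing $\rhostd\circ\varphi_{\chi_1,\chi_2}\cong\rhostd\circ\varphi_{\chi'_1,\chi'_2}$, which contradicts Theorem~\ref{thm:cusp}(\ref{thm:cusp.ii}). The main obstacle is really concentrated in the construction side (already done in Theorem~\ref{thm:cusp}), namely the existence of enough totally ramified admissible pairs $(E_i/F,\chi_i)$ with prescribed parity and determinant so that $\rho_{\chi_1}\oplus\rho_{\chi_2}$ (possibly augmented by~$\rho_{\chi_0}$, a central character, or the trivial character) forms a valid $\vG_N$-valued parameter; this existence is ensured by Lemma~\ref{lem:exist} together with the construction following Corollary~\ref{cor:oddO}.
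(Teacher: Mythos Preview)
Your proposal is correct and follows the same route as the paper, which simply remarks that the corollary follows ``immediately'' from Theorem~\ref{thm:cusp} via the local Langlands correspondence; you have spelled out the details the paper leaves implicit. One small correction: in your final step you invoke Lemma~\ref{lem:conjGNGL} for the implication ``outer conjugate $\Rightarrow$ transfers isomorphic'', but that lemma states the \emph{converse}; the direction you need follows instead directly from the fact that Arthur's map $\oIrrtemp(G_N)\to\oPhitemp(G_N)$ is well-defined on outer conjugacy classes, so outer conjugate $\pi,\pi'$ have outer equivalent parameters and hence $\rhostd\circ\varphi_{\pi}\cong\rhostd\circ\varphi_{\pi'}$.
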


\begin{proof}[Proof of Theorem~\ref{thm:cusp}]
By multiplicativity, for~\ref{thm:cusp.i} we need to prove
\[
\gamma(s, (\rho_{\chi_1}\oplus\rho_{\chi_2}) \otimes \tau, \psi) = \gamma(s, (\rho_{\chi_1'}\oplus\rho_{\chi'_2}) \otimes \tau, \psi).
\]
But this follows exactly from Proposition~\ref{prop:basic2}, since the conditions there are clearly satisfied. 

For~\ref{thm:cusp.ii}, we just need to verify that~$\rho_{\chi_1}$ is inequivalent to~$\rho_{\chi'_1}$ (since it is inequivalent to~$\rho_{\chi'_2}$ by hypothesis); that is, we need to check that the admissible pairs~$(E_1/F,\chi_1)$ and~$(E_1/F,\chi'_1)$ are inequivalent. But the characters~$\chi_1,\chi'_1$ agree, and do not factor through any norm, on restriction to~$U_{E_1}^{0+}$, so the only~$F$-automorphism of~$E_1$ which conjugates this restriction is the identity. But~$\chi_1\ne\chi_2$ so we are done.
\end{proof} 

\subsection{Self-dual supercuspidal representations of~$\GL_{2N}(F)$}

When~$N$ is odd, Theorem~\ref{thm:cusp} does not quite close the gap with Theorem~\ref{uniformproof}; that is, it may be that twisting only by representations of dimension up to~$N-1$ \emph{is sufficient} to determine the Langlands parameter of a supercuspidal representation. This is very closely related to the question of whether, for self-dual supercuspidal representations of~$\GL_{2N}(F)$, twisting only by representations of~$\GL_r(F)$ with~$r\le N-1$ is sufficient to distinguish them. 

Let us explain. Write a supercuspidal Langlands parameter $\varphi$ for $G_N$ as a direct sum $\varphi_1 \oplus \cdots \oplus \varphi_n$ of irreducible, self-dual representations of $W_F$.  If $N>1$ is odd, then none of the $\varphi_i$ can be $N$-dimensional, since there is no self-dual irreducible representation of $W_F$ of odd dimension when $p \neq 2$.  Thus, as long as $\varphi$ is not irreducible, twisting by representations of dimensions $1, ..., N-1$ is all that is needed in order to distinguish a parameter via twisted gamma factors.  In searching for a sharpness example to try to show that $N$ is sharp, therefore, we were forced to consider irreducible parameters of $G_N$.  We could not find such a sharpness example, at least when the extension $E/F$ is totally ramified.  Only on this basis, we felt that we could make the following conjecture:

\begin{conj}\label{conj:better}
Suppose~$p>N$ and suppose that $N$ is odd. Let~$\pi,\pi'$ be self-dual supercuspidal representations of~$\GL_{2N}(F)$ of the same parity which are~$\gamma$-equivalent to level~$N-1$.
Then~$\pi\cong\pi'$.
\end{conj}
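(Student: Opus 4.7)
The plan is to translate the problem to the Galois side via the local Langlands correspondence for $\GL_{2N}(F)$: the hypotheses make $\pi$ and $\pi'$ correspond to irreducible self-dual representations $\rho = \rho_\chi = \Ind_{E/F}\chi$ and $\rho' = \rho_{\chi'} = \Ind_{E'/F}\chi'$ attached, via Theorem~\ref{thm:moy1}, to admissible pairs of degree~$2N$; by Proposition~\ref{prop:moy} there are quadratic subextensions $E/L$ and $E'/L'$ with $\chi^\sigma = \chi^{-1}$ and $\chi'^{\sigma'} = \chi'^{-1}$, and the same-parity hypothesis dictates the triviality or non-triviality of $\chi|_{L^\times}$ and $\chi'|_{L'^\times}$. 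The goal is then to show that these admissible pairs are conjugate.

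Next, I would deploy the full strength of the standard local converse theorem's proof strategy (as in \cite{ALST18,A22}) at the available levels $r=1,\ldots,N-1$ to extract all the coarse invariants. Using Proposition~\ref{prop:basic} iteratively for $\tau = \Ind_{L''/F}\eta'$ of increasing dimension $r\le N-1$, one progressively identifies the fields ($E \cong E'$), the depth, and forces $\chi$ and $\chi'$ to agree on $U_E^{0+}$; the argument is parallel to the depth-by-depth reduction that yields the theorem at level $N$ for $\GL_{2N}$, except that the final step (determining the value on a uniformizer) is missing. Thus after this stage one is reduced to $\chi' = \chi\cdot\eta$ with $\eta$ an unramified character of $E^\times$.

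The third step is to exploit self-duality and parity to restrict $\eta$. In the genuinely problematic (totally ramified) case, the quadratic subextension $L \subset E$ is unique, so $\chi'$ being self-dual forces $\eta^\sigma = \eta^{-1}$; combined with $\eta$ being unramified this gives $\eta^2 = 1$ and hence $\eta$ is either trivial or the unique quadratic unramified character of $E^\times$. The same-parity hypothesis either rules out the nontrivial $\eta$ directly (when $E/L$ is unramified, via $\eta|_{L^\times}$) or, when $E/L$ is ramified, is automatic and does not itself exclude it.

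The main obstacle, and the reason the statement remains a conjecture, is closing this last gap: ruling out the nontrivial quadratic unramified twist $\eta$ under only the level~$N-1$ hypothesis. Without self-duality this twist is exactly the one that produces sharpness of the standard local converse theorem for $\GL_{2N}$ (see Theorem~\ref{thm:better}), and it is invisible to all twists by $\tau$ of dimension~$\le N-1$ in the general setting. The only remaining leverage is the symmetry $\chi^\sigma=\chi^{-1}$: one would hope that for any admissible pair $(L''/F,\eta')$ of degree $\le N-1$, pairing the identity furnished by Proposition~\ref{prop:basic} for $\eta'$ with its Galois conjugate under $\sigma$ produces, after simplification using $\chi^\sigma=\chi^{-1}$ and the restriction on $\chi|_{L^\times}$, an identity equivalent to one of the missing level-$N$ identities. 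Making this cancellation precise, perhaps by a careful orbit analysis of the elements $f_\alpha(-\beta)$ under the action of the quadratic involution on $E$, is the crux where a new idea is needed.
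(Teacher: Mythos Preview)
The statement is a \emph{Conjecture} in the paper and is not proved there; the authors say they ``could not find such a sharpness example'' and on that basis alone state the conjecture, adding that they ``have no evidence as to whether the assumption on~$p$ or the assumption on~$N$ is needed.'' So there is no paper proof to compare against, and your proposal is, as you yourself acknowledge, a strategy that does not close.

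Your first and third paragraphs are sound: the translation to admissible pairs is correct, and your observation that in the totally ramified case self-duality forces the discrepancy~$\eta$ to be at worst unramified quadratic is accurate. The serious gap, however, lies earlier than the one you flag at the end. Your second paragraph asserts that $\gamma$-equivalence to level~$N-1$ already pins down~$E\cong E'$ and~$\chi|_{U_E^{0+}}=\chi'|_{U_E^{0+}}$, so that only an unramified twist can remain. This is not a known result: the existing proofs of the Local Converse Theorem for~$\GL_{2N}$ in~\cite{JL18,Ch19} do not proceed by any depth-by-depth identification on the admissible-pair side, and the techniques of~\cite{ALST18,A22} that you invoke run in the \emph{opposite} direction---they construct counterexamples, they do not prove determinacy. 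In particular the self-dual counterexamples of Theorem~\ref{thm:better} (which sit at level~$N-3$ when~$N$ is odd) differ on~$U_E^{1/(2N)}/U_E^{(1/(2N))+}$, not by an unramified character, so it is far from clear that raising the level from~$N-3$ to~$N-1$ eliminates all positive-depth ambiguities and leaves only the unramified quadratic one. Until your second-paragraph reduction is itself established, the ``last gap'' you isolate may well not be the only one.
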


\begin{rmk}
We have no evidence as to whether the assumption that $p>N$ or the assumption that $N$ be odd are needed.
\end{rmk}

The following theorem shows that Conjecture \ref{conj:better} is almost best possible: the bound can be improved at most to~$N-2$. We have no evidence as to whether this improvement in bound is possible. The theorem also shows that, in the case that~$N$ is even, the same statement as in Conjecture \ref{conj:better} would be the best possible.

\begin{thm}\label{thm:better}
Suppose~$p>N$, set~$M=2\lfloor\frac N2\rfloor$, and fix a parity. There exist inequivalent self-dual supercuspidal representations~$\pi,\pi'$ of~$\GL_{2N}(F)$ of this parity which are~$\gamma$-equivalent to level~$M-2$.
\end{thm}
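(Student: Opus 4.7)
The plan is to realize $\pi,\pi'$ via local Langlands for $\GL_{2N}(F)$ as the supercuspidals corresponding to $\rho_\chi,\rho_{\chi'}$ for suitable self-dual admissible pairs $(E/F,\chi),(E/F,\chi')$ of degree $2N$ and of the given parity, then apply Proposition~\ref{prop:basic2} with $|I|=1$ and $r=M-1$. I take $E=F(\varpi_E)$ with $\varpi_E^{2N}=\varpi_F$, a tamely totally ramified extension of degree $2N$, equipped with the involution $\sigma:\varpi_E\mapsto-\varpi_E$ and fixed field $L=F[\varpi_E^2]$. Pick an integer $m$ with $m^{-1}\equiv M-1\pmod{2N}$; a direct check using $M=2\lfloor N/2\rfloor$ shows $\gcd(M-1,2N)=1$ in both parities of $N$, so such $m$ exists, is coprime to $2N$ (hence odd), and satisfies $m\not\equiv\pm 1\pmod{2N}$. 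By Lemma~\ref{lem:tbeta}, the smallest $r>1$ with $rm\equiv\pm 1\pmod{2N}$ is exactly $M-1$, so $t_\beta(M-1)>1/(2N)$, whence $t_\beta(M-1)\ge 1/N$, for any $\beta\in E$ of valuation $-m/(2N)$. Set $\beta=\varpi_E^{-m}$: since $m$ is odd, $\beta\in\varpi_EL$, giving $\sigma(\beta)=-\beta$, and $\beta$ is quasi-minimal of depth $d=m/(2N)$.

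By Lemma~\ref{lem:exist}, at least $p$ distinct characters $\psi$ of $U_E^{0+}$ are represented by $\beta$, coincide on $U_E^{(1/e_{E/F})+}=U_E^{1/N}$, and satisfy $\psi^\sigma=\psi^{-1}$, each extending to a self-dual admissible character of $E^\times$ of the given parity. Pick one such $\chi$ and set $\chi'=\chi\mu$, where $\mu$ is a character of $E^\times$ trivial on both $U_E^{1/N}$ and $L^\times$, with $\mu^\sigma=\mu^{-1}$. Such $\mu$ is parameterized by an arbitrary character of the one-dimensional $k_F$-space $U_E^{1/(2N)}/U_E^{1/N}$ (the anti-$\sigma$-invariance there is automatic, as $\sigma$ acts by $-1$) together with $\mu(\varpi_E)\in\{\pm 1\}$ (forced by $\mu(\varpi_E^2)=1$; note that $\mu|_{\mu'_E}=1$ follows from $\mu'_L=\mu'_F=\mu'_E$ and $\mu|_{L^\times}=1$). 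Because $\beta=\varpi_E^{-m}$ has trivial unit part, $\mu(\beta)=\mu(\varpi_E)^{-m}=\mu(\varpi_E)$ (using $m$ odd); taking $\mu(\varpi_E)=1$ and $\mu$ non-trivial on the $(q-1)$-element family of non-trivial characters of $U_E^{1/(2N)}/U_E^{1/N}$ (non-empty since $q\ge p\ge 3$) yields $\mu\ne 1$ with $\mu(\beta)=1$. The resulting $\chi'$ is self-dual of the same parity, admissible, and represented by $\beta$ (using $\mu|_{U_E^{(d/2)+}}=1$, which holds because $m\ge 3$ forces $U_E^{(d/2)+}\subseteq U_E^{1/N}$).

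I then verify the three hypotheses of Proposition~\ref{prop:basic2} for $I=\{1\}$ and $r=M-1$: (i) $\chi,\chi'$ agree on $U_E^{1/N}\supseteq U_E^{t_\beta(M-1)}$; (ii) $\chi(\beta)=\chi'(\beta)$ by construction; (iii) $\chi|_{F^\times}=\chi'|_{F^\times}$ because equal parity forces $\chi|_{L^\times}=\chi'|_{L^\times}$ and $F\subset L$. The proposition then gives $\gamma(s,\rho_\chi\otimes\tau,\psi_F)=\gamma(s,\rho_{\chi'}\otimes\tau,\psi_F)$ for all irreducible $\tau$ of dimension less than $M-1$, which by multiplicativity extends to $\gamma$-equivalence at level $M-2$ for the associated supercuspidals of $\GL_{2N}(F)$. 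For inequivalence, any $\sigma_0\in\Aut_F(E)\setminus\{\mathrm{id}\}$ acts on $\varpi_E$ by multiplication by some $\zeta\in\mu_{2N}\cap F\setminus\{1\}$; since $\gcd(m,2N)=1$ we have $\zeta^{-m}\ne 1$, so $\sigma_0(\beta)-\beta$ has valuation $-d$ and lies outside $\CP_E^{-d/2}$, preventing $\sigma_0$ from conjugating $\chi$ to $\chi'$. Hence $(E/F,\chi),(E/F,\chi')$ are non-conjugate and $\rho_\chi\not\cong\rho_{\chi'}$, so $\pi\not\cong\pi'$. The main obstacle is condition~(ii) of Proposition~\ref{prop:basic2}: with a single pair one needs a genuine twist $\mu\ne 1$ with $\mu(\beta)=1$, unlike Theorem~\ref{thm:cusp} where sign cancellation between two pairs sufficed; the choice $\beta=\varpi_E^{-m}$ with trivial unit part reduces this to $\mu(\varpi_E)=1$, leaving enough freedom on the filtration quotient to make $\mu$ non-trivial.
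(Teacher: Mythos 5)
Your proposal is correct and follows essentially the same strategy as the paper's proof: take $E/L/F$ with $L/F$ totally ramified of degree $N$, $E/L$ ramified quadratic with involution $\sigma$, choose $\beta$ of valuation $-d = -m/(2N)$ with $\sigma(\beta)=-\beta$ and $m$ arranged so that the least $r$ with $rm\equiv\pm1\pmod{2N}$ is $M-1$, invoke Lemma~\ref{lem:tbeta}, produce two self-dual admissible characters of the right parity represented by $\beta$, coinciding on $U_E^{(1/2N)+}$ and agreeing at $\beta$, and feed them into Proposition~\ref{prop:basic2}. The only deviations are cosmetic: you take $m\equiv (M-1)^{-1}\pmod{2N}$ rather than the paper's explicit formula (both give valid choices for which the least such $r$ is indeed $M-1$, though — like the paper — you assert rather than verify the minimality of $r=M-1$, an easy check given $\gcd(m,2N)=1$), and you build $\chi'=\chi\mu$ by writing down $\mu$ on $E^\times/L^\times U_E^{1/N}\cong\BZ/2\BZ\times k_F$ explicitly, whereas the paper invokes Lemma~\ref{lem:exist} twice and fixes the sign on $\beta$ using the unramified quadratic twist; these achieve the same thing, and your version spells out the non-conjugacy of $(E/F,\chi)$ and $(E/F,\chi')$ in somewhat more detail than the paper does.
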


\begin{proof}
As in other results, by the local Langlands correspondence we translate to a question about self-dual irreducible representations of~$W_F$. Let~$L/F$ be a totally ramified extension of degree~$N$, and~$E/L$ a ramified quadratic extension with galois involution~$\sigma$. We set
\[
m=\begin{cases} N+1 &\text{ if~$N$ is even},\\
\frac{3N+(-1)^{(N+1)/2}}{2} &\text{ if~$N$ is odd}.
\end{cases}
\]
Note that~$m$ is odd and~$d=\frac m{2N}$ has least denominator~$2N$. Now an easy exercise in congruences shows that the least integer~$r$ such that~$rm\equiv \pm 1\pmod{2N}$ is~$r=M-1$, so that Lemma~\ref{lem:tbeta} implies that~$t_\beta(M-1)>\frac 1{2N}$.

Now let~$\beta\in E$ be an element of valuation~$-d$ such that~$\sigma(\beta)=-\beta$, and let~$(E/F,\chi)$ and~$(E/F,\chi')$ be self-dual admissible characters of the required parity which are represented by~$\beta$, coincide on~$U_E^{(1/2N)+}$, and such that~$\chi(\beta)=\chi'(\beta)$; these exist by Lemma~\ref{lem:exist}, noting that since~$\beta^2\in L^\times$ we must have~$\chi(\beta^2)=\chi'(\beta^2)$ and both choices of square root are permitted.
Then Proposition~\ref{prop:basic2} gives the result. 
\end{proof}


\subsection{A (conjectural) improved bound for classical groups when~$N$ is odd}
When~$N$ is odd, Conjecture~\ref{conj:better} implies an improved bound for the Local Converse Theorem for the classical group~$G_N$, which would then mean that our sharpness result in Corollary~\ref{cor:cusp} is optimal.

\begin{thm}\label{thm:Gbetter}
Suppose~$N$ is odd and $p>N$. Suppose also that Conjecture~\ref{conj:better} is true for~$\GL_{2N}(F)$. Let~$\pi_1,\pi_2$ be irreducible generic supercuspidal representations of~$G_N$ which are~$\gamma$-equivalent to level~$N-1$.
Then~$\pi_1, \pi_2$ are outer conjugate.  
\end{thm}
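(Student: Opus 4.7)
The strategy closely mirrors the supercuspidal case of Theorem~\ref{uniformproof}. Let~$\Pi_i$ denote the transfer of~$\pi_i$ to~$\GL_{N^*}(F)$; by Lemma~\ref{lem:conjGNGL}, it suffices to prove~$\Pi_1\cong\Pi_2$. Write each~$\Pi_i=\Pi_{i,1}\boxplus\cdots\boxplus\Pi_{i,k_i}$ as an isobaric sum of pairwise inequivalent self-dual supercuspidal components whose parity is dictated by~$\vG_N(\BC)$ (multiplicity-freeness and this parity constraint being standard structural features of the transfer of a generic supercuspidal). Since~$p\ne 2$, each~$\Pi_{i,j}$ has dimension~$1$ (a quadratic character) or even dimension, so, as~$N$ is odd and~$N>1$, no~$\Pi_{i,j}$ can have dimension exactly~$N$. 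Since~$\sum_j\dim\Pi_{i,j}=N^*\le 2N+1$, at most one summand of~$\Pi_i$ has dimension~$\ge N+1$.

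Split~$\Pi_i=\Pi'_i\boxplus B_i$, with~$\Pi'_i$ the (possibly empty) single summand of dimension~$n'\ge N+1$ and~$B_i$ the isobaric sum of the remaining summands, each of dimension~$\le N-1$. I would first isolate~$B_1\cong B_2$ by a pole analysis of Jacquet--Shalika/Henniart type. For any supercuspidal~$\tau$ of~$\GL_r(F)$ with~$r\le N-1$, a dimension count gives~$L(s,\Pi'_i\times\tau)=1$, since~$\Pi'_i$ is irreducible supercuspidal of dimension~$n'>r$; consequently~$\gamma(s,\Pi'_i\times\tau,\psi)$ is a nonzero monomial in~$q^{-s}$ with no poles. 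By multiplicativity,
\[
\gamma(s,\Pi_i\times\tau,\psi)=\gamma(s,\Pi'_i\times\tau,\psi)\,\gamma(s,B_i\times\tau,\psi),
\]
so the pole locations of~$\gamma(s,B_1\times\tau,\psi)$ and~$\gamma(s,B_2\times\tau,\psi)$ agree, and these detect exactly the pieces of~$B_i$ isomorphic to an unramified twist of~$\tau^\vee$. Running~$\tau$ over all supercuspidals of~$\GL_r(F)$ for~$1\le r\le N-1$ captures every component of~$B_1$ and~$B_2$ (each of dimension~$\le N-1$), yielding~$B_1\cong B_2$. This pole-isolation step is the main technical obstacle: although standard in spirit, one must verify that~$\Pi'_i$ contributes no poles that could interfere with detection, and this rests precisely on the bound~$\dim\Pi'_i>\dim\tau$.

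Cancelling then gives~$\gamma(s,\Pi'_1\times\tau,\psi)=\gamma(s,\Pi'_2\times\tau,\psi)$ for all such~$\tau$, together with~$n':=\dim\Pi'_1=\dim\Pi'_2$. The representations~$\Pi'_1,\Pi'_2$ are self-dual supercuspidals of~$\GL_{n'}(F)$ of matching parity. Since~$n'\in\{1\}\cup 2\BZ_{>0}$ and~$N+1\le n'\le N^*\le 2N+1$, we have~$n'$ even with~$n'\in\{N+1,N+3,\ldots,2N-2,2N\}$. If~$n'\le 2N-2$ then~$\lfloor n'/2\rfloor\le N-1$ and the standard Local Converse Theorem for~$\GL_{n'}(F)$ forces~$\Pi'_1\cong\Pi'_2$. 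The remaining case~$n'=2N$ is exactly what Conjecture~\ref{conj:better} addresses, and it applies because~$N$ is odd and~$\Pi'_1,\Pi'_2$ are self-dual supercuspidals of~$\GL_{2N}(F)$ of the same parity. Observe that the parity constraint is what rules out the awkward intermediate dimension~$n'=2N-1$, so Conjecture~\ref{conj:better} is genuinely invoked in only this one new case. Combining, $\Pi_1\cong\Pi_2$, and Lemma~\ref{lem:conjGNGL} concludes that~$\pi_1,\pi_2$ are outer conjugate.
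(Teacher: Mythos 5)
Your proof is correct and takes essentially the same approach as the paper's: reduce via the transfer to $\GL_{N^*}(F)$, match the components of dimension $\le N-1$ by pole detection (the argument behind the paper's Lemma~\ref{lem:nonmaxbetter}, which cites \cite[Proposition~2.9]{JNS15}), appeal to the standard Local Converse Theorem for any unique large component of dimension~$\le 2N-2$, and invoke Conjecture~\ref{conj:better} precisely when that component has dimension~$2N$. The only presentational difference is that the paper isolates the ``no component of dimension~$\ge 2N$'' case as a separate lemma and proceeds by contradiction, while you organize it as a direct split $\Pi_i=\Pi'_i\boxplus B_i$; one small point worth flagging is that you describe the $\Pi_{i,j}$ as supercuspidal (i.e.\ implicitly assume the parameters are trivial on the Deligne $\SL_2$), whereas the paper's Lemma~\ref{lem:nonmaxbetter} is written to tolerate $\SL_2$-factors -- this does not affect the validity of your argument for generic supercuspidal $\pi_i$, but it is a hypothesis you are invoking tacitly.
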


The proof of this relies on the following (unconditional) lemma.

\begin{lem}\label{lem:nonmaxbetter}
Suppose~$N$ is odd and $p>N$. Let~$\pi_1$ be an irreducible generic supercuspidal representation of~$G_N$ such that largest irreducible component of the restriction to~$W_F$ of~$\rhostd\circ\varphi_{\pi_1}$ has dimension strictly less than~$2N$. Let~$\pi_2$ be an irreducible generic supercuspidal representation of~$G_N$ such that~$\pi_1,\pi_2$ are~$\gamma$-equivalent to level~$N-1$.
Then~$\pi_1, \pi_2$ are outer conjugate.  
\end{lem}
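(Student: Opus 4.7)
The plan is to transfer the problem to the Galois side so that $\gamma$-equivalence of $\pi_1,\pi_2$ becomes $\gamma$-equivalence of the self-dual parameters $\rho_i:=\rhostd\circ\varphi_{\pi_i}$, and then to reduce to showing $\rho_1\cong\rho_2$ via Lemma~\ref{lem:conjGNGL}. Since each $\pi_i$ is generic supercuspidal and $p>N$, the parameter $\varphi_{\pi_i}$ is discrete with trivial $\SL_2$-action, so $\rho_i$ is a multiplicity-free direct sum of pairwise non-isomorphic irreducible self-dual representations of $W_F$ of appropriate parity. Because $p\ne 2$ and $N$ is odd, such components have dimension either $1$ (a quadratic character) or even, so none has dimension exactly $N$; we call a component \emph{small} if its dimension is $\le N-1$ and \emph{large} otherwise (so dimension $\ge N+1$ and even). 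The hypothesis on $\rho_1$ restricts its large components further to dimension $\le 2N-2$.

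The first step is to match small components by a pole calculation at $s=0$. For an irreducible self-dual $\sigma$ of dimension $\le N-1$, we take the twist $\tau=\sigma$ in the given equality $\gamma(s,\rho_1\otimes\sigma,\psi)=\gamma(s,\rho_2\otimes\sigma,\psi)$. A direct computation with Euler factors yields
\[
\ord_{s=0}\gamma(s,\rho_i\otimes\sigma,\psi)=\#\{\text{irreducible summands of }\rho_i\text{ isomorphic to }\sigma\},
\]
using that $\epsilon$-factors are nowhere vanishing, that $L(1-s,(\rho_l\otimes\sigma)^\vee)$ is holomorphic and nonzero at $s=0$ for bounded-image parameters (ensured by supercuspidality of $\pi_i$), and that $L(s,\rho_l\otimes\sigma)$ has a simple pole at $s=0$ precisely when $\rho_l\cong\sigma$ (via the Schur summand in $\rho_l\otimes\rho_l^\vee$). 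Multiplicity-freeness of $\rho_i$ makes the right-hand side $0$ or $1$; letting $\sigma$ range over the small components of $\rho_1$ and symmetrically of $\rho_2$ forces the small parts to coincide. Writing $\rho_i=\rho_i^{\mathrm{sm}}\oplus\rho_i^{\mathrm{lg}}$ for the small/large decomposition, we conclude $\rho_1^{\mathrm{sm}}\cong\rho_2^{\mathrm{sm}}$.

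The second step is cancellation and a dimension count on the large parts. By multiplicativity of $\gamma$-factors we obtain
\[
\gamma(s,\rho_1^{\mathrm{lg}}\otimes\tau,\psi)=\gamma(s,\rho_2^{\mathrm{lg}}\otimes\tau,\psi)
\]
for all irreducible $\tau$ of dimension $\le N-1$. Each large component of $\rho_1$ has dimension in $[N+1,2N-2]$, and two such would sum to at least $2(N+1)>N^*\in\{2N,2N+1\}$; hence $\rho_1^{\mathrm{lg}}$ is either zero or irreducible of some dimension $n\le 2N-2$. Since $\dim\rho_2^{\mathrm{lg}}=\dim\rho_1^{\mathrm{lg}}\le 2N-2<2N$, no component of $\rho_2^{\mathrm{lg}}$ can have dimension $2N$ either, and the same dichotomy applies.

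If both large parts vanish, then $\rho_1\cong\rho_2$ immediately. Otherwise $\rho_1^{\mathrm{lg}}$ and $\rho_2^{\mathrm{lg}}$ are irreducible of a common dimension $n\le 2N-2$ with $\lfloor n/2\rfloor\le N-1$, so the standard local converse theorem for $\GL_n$ forces $\rho_1^{\mathrm{lg}}\cong\rho_2^{\mathrm{lg}}$ and hence $\rho_1\cong\rho_2$. Lemma~\ref{lem:conjGNGL} then yields outer conjugacy of $\pi_1,\pi_2$. The main technical point will be justifying the pole-order formula of Step~1 --- specifically, the boundedness of supercuspidal parameters and the multiplicity-freeness of the discrete decomposition into irreducible self-dual summands; once this is in place, the dimension count (crucially using the oddness of $N$ to exclude dimension-$N$ components) and the appeal to the $\GL_n$ converse theorem are routine.
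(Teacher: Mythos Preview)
Your strategy is the same as the paper's: transfer to the Galois side, match the ``small'' self-dual summands of $\rhostd\circ\varphi_{\pi_i}$ by a pole argument, use a dimension count to force at most one ``large'' summand on each side, and finish with the $\GL_n$ local converse theorem together with Lemma~\ref{lem:conjGNGL}.

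The one point that needs attention is your assertion that ``since each $\pi_i$ is generic supercuspidal and $p>N$, the parameter $\varphi_{\pi_i}$ is discrete with trivial $\SL_2$-action.'' The condition $p>N$ has nothing to do with this; what you need is that the functorial transfer to $\GL_{N^*}(F)$ of a \emph{generic supercuspidal} representation of $G_N$ is an isobaric sum of pairwise inequivalent self-dual supercuspidals (equivalently, the Weil--Deligne parameter has trivial monodromy). This is true, but it requires input from the description of the local lift (as in~\cite{CKPSS,JS03} and their analogues for the other groups) and should be cited rather than asserted. The paper sidesteps this entirely: it writes $\varphi_i^{\GL}$ in the general ``without gaps'' form $\bigoplus_j \varphi_i^{(j)}\otimes(\st_{a_i^{(j)}}\oplus\st_{a_i^{(j)}-2}\oplus\cdots)$ and invokes~\cite[Proposition~2.9]{JNS15} to detect a summand $\varphi\otimes\st_a$ via a real pole of $\gamma(s,\Pi_i\times\pi_\varphi^\vee,\psi)$ at $s=\tfrac{a+1}{2}$, rather than your pole at $s=0$. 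Once your trivial-$\SL_2$ claim is properly justified, your simpler $s=0$ computation is valid and the rest of your argument goes through exactly as the paper's does.
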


\begin{proof}
Let~$\pi_1,\pi_2$ be irreducible generic supercuspidal representations of~$G_N$ satisfying the hypotheses, so that
\[
\gamma(s, \pi_1 \times \tau, \psi) = \gamma(s, \pi_2 \times \tau, \psi),
\]
for all irreducible supercuspidal representations~$\tau$ of~$\GL_r$ with~$r = 1, \ldots, N-1$. We abbreviate~$\varphi_i^{\GL}=\rhostd\circ\varphi_{\pi_i}$ and let~$\Pi_1,\Pi_2$ be the transfers to~$\GL_{N^*}(F)$. Then we can write
\[
\varphi_i^{\GL} \cong \bigoplus_{j=1}^{t_i} \varphi_i^{(j)}\otimes\left(\st_{a_i^{(j)}}\oplus\st_{a_i^{(j)}-2}\oplus\cdots\right),
\] 
where~$\st_a$ is the unique~$a$-dimensional irreducible representation of~$\SL_2(\BC)$ and the~$\varphi_i^{(j)}$ are distinct self-dual irreducible representations of the Weil group~$W_F$. Then, writing~$\varphi_\tau$ for the Langlands parameter corresponding to~$\tau$, we have
\begin{equation}\label{eqn:transfer}
\gamma(s,\Pi_i\times \tau,\psi) = \gamma(s,\varphi_i^{\GL}\otimes \varphi_\tau,\psi) = \prod_{j=1}^{t_i} \prod_{k=\frac{1-a_i^{(j)}}2}^{\frac{a_i^{(j)}-1}2} \gamma(s+k,\varphi_i^{(j)}\otimes\varphi_\tau,\psi).
\end{equation}
By Lemma~\ref{lem:conjGNGL}, we need only show that~$\varphi_1^{\GL}$ is equivalent to~$\varphi_2^{\GL}$. For contradiction, we suppose not. Then there are an irreducible self-dual representation~$\varphi$ of~$W_F$ and natural number~$a$ such that~$\varphi\otimes\st_a$ appears in~$\varphi_1^{\GL}$ but not~$\varphi_2^{\GL}$, and we can choose the maximal such~$a$. 

Suppose first that there is such~$\varphi$ with~$\dim(\varphi)\le N^*/2$, so that~$\dim(\varphi)\le N$. As~$N\ge 3$ is odd while~$\dim(\varphi)$ is~$1$ or even, we see that in fact~$\dim(\varphi)\le N-1$. But then, writing~$\pi_\varphi$ for the corresponding (self-dual) irreducible representation a general linear group,~\cite[Proposition~2.9]{JNS15} implies that~$\gamma(s,\Pi_1\times \pi_\varphi^\vee,\psi)$ has a real pole at~$s=\frac{a+1}2$, while~$\gamma(s,\Pi_2\times \pi_\varphi^\vee,\psi)$ does not, a contradiction.

Thus the only such~$\varphi$ must have~$\dim(\varphi)> N^*/2$. Then~$\varphi\otimes\st_1$ is in fact the \emph{unique} component of~$\varphi_1^{\GL}$ which does not appear in~$\varphi_2^{\GL}$ (since every other such component would have dimension less than~$N^*/2$). Moreover, there is also a unique component~$\varphi'\otimes\st_1$ of~$\varphi_2^{\GL}$ which does not appear in~$\varphi_1^{\GL}$ (again, if there were two then at least one would have dimension less than~$N^*/2$, which is impossible by the previous paragraph). But then, since all other terms in~\eqref{eqn:transfer} are identical for~$i=1,2$, we have that~$\dim(\varphi)=\dim(\varphi')< 2N$ (by hypothesis) and they are~$\gamma$-equivalent to level~$N-1$. Then Theorem~\ref{uniformproof} implies that~$\varphi,\varphi'$ are equivalent, which is absurd.
\end{proof}

\begin{proof}[Proof of Theorem~\ref{thm:Gbetter}]
By Lemma~\ref{lem:nonmaxbetter}, we need only consider the case where~$\rhostd\circ\varphi_{\pi_1}$ and~$\rhostd\circ\varphi_{\pi_2}$ are irreducible of dimension~$2N$ or the sum of a quadratic character and an irreducible representation of dimension~$2N$. As in the proof of Lemma~\ref{lem:nonmaxbetter}, in the latter case we see that the quadratic character is the same so, by multiplicativity of~$\gamma$-factors, we reduce to the first case. But then these are the Langlands parameters of a pair of self-dual supercuspidal representations of~$\GL_{2N}$ of the same parity, so Conjecture~\ref{conj:better} implies they are equivalent. Thus~$\varphi_{\pi_1},\varphi_{\pi_2}$ are outer equivalent, so~$\pi_1,\pi_2$ are outer conjugate.
\end{proof}

In fact, we can prove this improved Local Converse Theorem in the case of even special orthogonal groups.

\begin{thm}\label{thm:SObetter}
Suppose~$N$ is odd and $p>N$. Let~$\pi_1,\pi_2$ be irreducible generic supercuspidal representations of~$\SO_{2N}(F)$ which are~$\gamma$-equivalent to level~$N-1$. Then~$\pi_1, \pi_2$ are outer conjugate.  
\end{thm}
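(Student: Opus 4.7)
The plan is to apply Lemma~\ref{lem:nonmaxbetter} directly to $\pi_1$ and to show that the one case its hypothesis excludes---namely, $\rhostd\circ\varphi_{\pi_1}$ being irreducible of dimension~$2N$---simply cannot occur for~$\SO_{2N}(F)$ when~$N$ is odd. Unlike in the proof of Theorem~\ref{thm:Gbetter}, no input from Conjecture~\ref{conj:better} will be needed, because the residual case there does not arise here.

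First I would recall the shape of~$\rhostd\circ\varphi_{\pi_1}$. Since~$\pi_1$ is a generic supercuspidal representation of~$\SO_{2N}(F)$, its Langlands parameter factors through~$\SO_{2N}(\BC)$ with no~$\SL_2(\BC)$ part, so~$\rhostd\circ\varphi_{\pi_1}$ is a direct sum of pairwise distinct irreducible orthogonal self-dual representations of~$W_F$, of total dimension~$2N$ and with trivial determinant. If every irreducible summand has dimension strictly less than~$2N$, then Lemma~\ref{lem:nonmaxbetter} immediately gives that~$\pi_1$ and~$\pi_2$ are outer conjugate. Otherwise, since the total dimension equals~$2N$, the representation~$\rhostd\circ\varphi_{\pi_1}$ is itself irreducible of dimension~$2N$, and this is the only case left to rule out.

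The key step is to show this remaining case is impossible. Since~$p>N$ is odd, we have~$p\nmid 2N$, so by Theorem~\ref{thm:moy1} we can write $\rhostd\circ\varphi_{\pi_1}\cong\Ind_{E/F}\chi$ for an admissible pair~$(E/F,\chi)$ of degree~$2N$. This induced representation is orthogonal by construction, and~$N$ is odd, so Corollary~\ref{cor:oddO} forces its determinant~$\omega_\chi$ to be non-trivial. This contradicts the fact that elements of~$\SO_{2N}(\BC)$ have trivial determinant in the standard representation. Hence this case never occurs, Lemma~\ref{lem:nonmaxbetter} applies to~$\pi_1$, and we conclude that~$\pi_1$ and~$\pi_2$ are outer conjugate.

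I do not anticipate a significant obstacle: given Corollary~\ref{cor:oddO}, the parity argument is clean, and the only point to notice is that the determinant constraint imposed by factoring through~$\SO_{2N}(\BC)$ precisely excludes the residual case left open by Lemma~\ref{lem:nonmaxbetter} when~$N$ is odd. This is exactly the parity feature of~$\SO_{2N}$ that is missing for the symplectic and odd special orthogonal groups, which is why the unconditional statement only holds in the even orthogonal case.
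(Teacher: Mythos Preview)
Your approach matches the paper's exactly: reduce via Lemma~\ref{lem:nonmaxbetter} to the case where~$\rhostd\circ\varphi_{\pi_1}$ is irreducible of dimension~$2N$, and then invoke Corollary~\ref{cor:oddO} to see that this residual case is vacuous.

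One caveat: your assertion that a generic supercuspidal parameter for~$\SO_{2N}(F)$ has trivial~$\SL_2(\BC)$ part is not correct in general---see the decomposition displayed in the proof of Lemma~\ref{lem:nonmaxbetter}, where nontrivial~$\st_a$'s can appear. This does not actually damage your argument, but the justification for applying Theorem~\ref{thm:moy1} in the residual case needs to be repaired. If~$\rhostd\circ\varphi_{\pi_1}$ is irreducible as a~$\WD_F$-representation, write it as~$\varphi\otimes\st_a$; the ``without gaps'' shape of a supercuspidal parameter forces~$a\in\{1,2\}$, and~$a=2$ would require~$\varphi$ to be an irreducible symplectic~$W_F$-representation of odd dimension~$N$, which is impossible since~$p\ne 2$. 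Hence~$a=1$, so~$\rhostd\circ\varphi_{\pi_1}$ is genuinely an irreducible~$W_F$-representation, and your appeal to Theorem~\ref{thm:moy1} and Corollary~\ref{cor:oddO} goes through. The paper's own proof elides this same step.
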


\begin{proof}
By Lemma~\ref{lem:nonmaxbetter}, we need only consider the case where~$\rhostd\circ\varphi_{\pi_1}$ is irreducible. But, by Corollary~\ref{cor:oddO}, there are no irreducible orthogonal representations of~$W_F$ of dimension~$2N$ and determinant~$1$ when~$N$ is odd, so there is nothing to prove.
\end{proof}

\subsection{Generic $\WD_F$-unacceptability of~$\SO_6$}
In the case of even special orthogonal groups, twisted standard~$\gamma$-factors cannot distinguish between Langlands parameters~$\varphi,\varphi'$ which are outer equivalent. One might hope that we can use different algebraic representations of the dual group to help distinguish them. The representation ring of~$\SO_{2N}(\BC)$ is generated by the fundamental representations~$\Wedge^1,\cdots,\Wedge^{N-1},\Wedge^N_+,\Wedge^N_-$, where~$\Wedge^r$ denotes the~$r^{\text{th}}$ exterior power of~$\rhostd$ and~$\Wedge^N_{\pm}$ are described in~\cite{YZ24} (see also below in the case~$N=3$). We cannot hope to use the exterior powers to distinguish~$\varphi$ from~$\varphi'$ since
\[
\Wedge^i\circ \varphi = \Wedge^i\circ\rhostd\circ\varphi 
\]
and~$\rhostd\circ\varphi\cong\rhostd\circ\varphi'$. Thus it is only using the representations~$\Wedge^N_+,\Wedge^N_-$ that we can hope to distinguish them. Since~$\Wedge^N_+$ is obtained by composing $\Wedge^N_-$ with the outer automorphism, we get that $\Wedge^N_+ \circ \varphi' \cong \Wedge^N_- \circ \varphi$; thus we can use these to distinguish~$\varphi$ from~$\varphi'$ precisely when~$\Wedge^N_+\circ\varphi\not\cong\Wedge^N_-\circ\varphi$.

In the case of~$\SO_4(F)$ it is shown in~\cite{YZ24} that this is always the case, so that twisted~$\gamma$-factors can be used to distinguish outer equivalent parameters. However, this seems unlikely to be true in general as~$\SO_{2N}(\BC)$ is not acceptable for~$N\ge 3$. 
The following example shows that, for~$\SO_6(F)$ when~$q\equiv 3\pmod 4$, there is indeed a parameter~$\varphi$ which is not conjugate to its outer twist such that~$\Wedge^N_+\circ\varphi \cong\Wedge^N_-\circ\varphi$.  We note that in~\cite[Proposition~7.3]{M24}, an example for $\SO_{2N}(F)$ is given when $q\equiv 1\pmod 4$; one difference is that our example is a discrete parameter (indeed the representations in the corresponding~$L$-packet are all supercuspidal) while that in~\cite{M24} is not.

\begin{exmp}\label{so6}
Let~$L/F$ be the biquadratic extension of~$F$, with~$E_1,E_2,E_3$ the quadratic subfields and~$E_1/F$ unramified. Let~$\zeta\in\mu'_{E_1}$ be a generator, so that~$\zeta$ has order~$q^2-1$, and let~$\varpi_F$ be a fixed uniformizer of~$F$. Define a complex character~$\chi_1$ of~$E_1^\times$ by 
\[
\chi_1(\varpi_F^n \zeta^k u_1) = \mathrm{i}^k, \quad\text{ for }u_1\in U_{E_1}^{0+},
\]
where~$\mathrm{i}$ is a fixed complex square root of~$-1$. Since~$q\equiv 3\pmod 4$, this is an orthogonal admissible character of order~$4$. We pick orthogonal admissible ramified characters~$\chi_2,\chi_3$, of~$E_2,E_3$ respectively, of distinct positive depths. Define~$\varphi$ up to outer equivalence by
\[
\rhostd\circ \varphi = \textstyle\bigoplus\limits_{i=1}^3 \rho_{\chi_i}.
\]
This does define a parameter into~$\SO_6(\BC)$ since~$\det\rho_{\chi_i}=\omega_{E_i/F}$, the local class field character, and the product of the three class field characters is trivial. Then we claim that~$\Wedge^3_+\circ\varphi\cong\Wedge^3_-\circ\varphi$, and that~$\varphi$ is not \shaun{equivalent} to its own outer conjugate.
\end{exmp}

\begin{proof}
We first explain why our parameter~$\varphi$ is not \shaun{equivalent} to its own outer conjugate. \shaun{Suppose~$g\in\OO(6,\BC)$ is such that there exists~$h\in\SO(6,\BC)$ such that~${}^g\varphi={}^h\varphi$. Then~$\rhostd(h^{-1}g)$ centralizes~$\rhostd\circ\varphi= \rho_{\chi_1}\oplus\rho_{\chi_2}\oplus\rho_{\chi_3}$ so, by Schur's Lemma,~$\rhostd(h^{-1}g)$ lies in $\{(\pm \mathrm{I}_2, \pm \mathrm{I}_2, \pm \mathrm{I}_2)\}$, where $\mathrm{I}_2$ is the~$2$-by-$2$ identity matrix.  But the determinant of such a matrix is equal to one, and thus $g$ also lies in~$\SO(6,\mathbb{C})$.  In particular, we conclude that~$\varphi$ is not equivalent to its own outer conjugate.}

We now compute~$\Wedge^3\circ\varphi$, which is the sum of the two~$\Wedge^3_{\pm}\circ\varphi$. Note that we are really computing~$\rhostd\circ\Wedge^3\circ\varphi$, but we will omit the~$\rhostd$ to simplify notation. We abbreviate~$\rho_i=\rho_{\chi_i}$ and~$\omega_i=\omega_{E_i/F}$. Since~$\Wedge^2\rho_i=\det\rho_i=\omega_i$, we have
\[
\Wedge^3\circ\varphi \cong \rho_1\otimes\rho_2\otimes\rho_3\oplus \textstyle\bigoplus\limits_{i\ne j} \rho_i\otimes\omega_j.
\]
Moreover, applying Lemma~\ref{lem1} twice and using that the~$\chi_i$ are self-dual, we have
\[
\rho_1\otimes\rho_2\otimes\rho_3\cong \Ind_{L/F} \left(\textstyle\prod\limits_{i=1}^3 \chi_{i,L}\right) \oplus \Ind_{L/F} \left(\textstyle\prod\limits_{i=1}^3 \chi^{-1}_{i,L}\right),
\]
where we recall that~$\chi_{i,L}=\chi_i\circ\N_{L/E_i}$. Moreover, in our situation, these two~$4$-dimensional representations are irreducible and isomorphic. Indeed,~$\chi_{1,L}$ is a quadratic character because~$\N_{L/E_1}(L^\times)=\varpi_F^{\BZ}\langle\zeta^2\rangle U_{E_1}^{0+}$, while the characters~$\chi_{2,L}$, $\chi_{3,L}$ and~$\chi_{2,L}\chi_{3,L}$ are all non-quadratic (since they have positive depth), so each~$4$-dimensional piece restricts to~$W_L$ to a sum of distinct characters and is thus irreducible; but both restrict to the sum of the same four characters, so they are isomorphic.

Moreover, for~$i\ne j$, we have~$\omega_j\circ\N_{E_i/F}=\omega_{L/E_i}$ so
\[
\rho_i\otimes\omega_j \cong \Ind_{E_i/F} (\chi_i\omega_{L/E_i}),
\]
which is independent of~$j$. Thus we have a decomposition
\[
\Wedge^3\circ\varphi \cong \left(\Ind_{L/F} \left(\textstyle\prod\limits_{i=1}^3 \chi_{i,L}\right) \oplus \Ind_{L/F} \textstyle\bigoplus\limits_{i=1}^3\Ind_{E_i/F} (\chi_i\omega_{L/E_i})\right)^{\oplus 2},
\]
in which each representation appearing is irreducible and is determined by the characters appearing in its restriction to~$W_L$. Thus, to verify that~$\Wedge^3_+\circ\varphi\cong\Wedge^3_-\circ\varphi$, we need only check that they have the same restrictions to~$W_L$.

We now use the general description of~$\Wedge^n_{\pm}$ from~\cite{YZ24} to make these explicit. We write~$(W,\langle\cdot,\cdot\rangle)$ for the~$6$-dimensional complex orthogonal space, with Witt basis~$e_1,e_2, e_3, e_{-3}, e_{-2}, e_{-1}$, so that
\[
\langle e_i,e_j\rangle = 
\begin{cases} 
1 &\text{ if~$j=-i$}, \\ 
0 &\text{ otherwise.} 
\end{cases}
\]
Then~$\SO_6(\mathbb{C})$ is (identified with) the group of automorphisms of~$W$ which preserve the symmetric form~$\langle\cdot,\cdot\rangle$ and have determinant~$1$. We define an involution~$\tau$ on~$\Wedge^3 W$ as follows: for a simple tensor~$x=e_i\wedge e_j\wedge e_k$, we let~$\tau(x)$ be the unique simple tensor~$y$ such that
\[
e_{-i}\wedge e_{-j}\wedge e_{-k} \wedge y = e_1 \wedge e_2 \wedge e_3 \wedge e_{-3} \wedge e_{-2} \wedge e_{-1}. 
\]
Then~$\Wedge^3_\pm(W)$ is the~$(\pm 1)$-eigenspace of~$\tau$ on~$W$ and we can write down bases for them: for~$(i,j,k)$ any cyclic permutation of~$(1,2,3)$, we write
\begin{align*}
u_i^{\pm} &= e_{\pm i}\wedge e_{\mp j}\wedge e_{\mp k},  \\
v_i^{\pm} &= e_i \wedge e_j\wedge e_{-j} \pm e_i\wedge e_k\wedge e_{-k}, \\
v_{-i}^{\pm} & = e_{-i} \wedge e_j\wedge e_{-j} \mp e_{-i}\wedge e_k\wedge e_{-k};
\end{align*}
then~$\Wedge^3_\pm(W)$ 
 has basis
\[
\left\{e_{\pm 1}\wedge e_{\pm2}\wedge e_{\pm 3}, u_i^\pm,v_i^{\pm},v_{-i}^\pm : i=1,2,3\right\}.
\]
Now we realise our parameter~$\varphi$ with~$W_{E_i}$ acting on~$e_i$ via the character~$\chi_i$ (so on~$e_{-i}$ via the character~$\chi_i^{-1}$), and check the action of~$W_L$: since~$\chi_{1,L}=\chi_{1,L}^{-1}$, the characters appearing in~$\Res_{W_L}^{W_F}\Wedge^3_+\circ\varphi$ are the same as those in~$\Res_{W_L}^{W_F}\Wedge^3_-\circ\varphi$, and we are done.
%

Thus, we have produced a parameter $\varphi$ for $\SO_6(F)$ that is not isomorphic to its own outer conjugate, and which satisfies $\Wedge^3_+ \circ \varphi = \Wedge^3_- \circ \varphi$, completing the proof.
\end{proof}

It is now not hard to use this example to deduce the following:

\begin{cor}\label{cor:SO2N} 
The group~$\SO_{2N}(\BC)$ is generically~$\WD_F$-unacceptable.
\end{cor}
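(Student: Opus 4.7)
The plan is to bootstrap Example~\ref{so6} from~$\SO_6$ to~$\SO_{2N}$ via orthogonal direct sum. For~$N=3$ the example itself is the counterexample: the parameters~$\varphi_0,\varphi_0'$ there are generic (in fact discrete), not $\SO_6(\BC)$-conjugate, and satisfy~$\rhostd\varphi_0\cong\rhostd\varphi_0'$ together with~$\Wedge^3_+\varphi_0\cong\Wedge^3_-\varphi_0$; as~$R[\SO_6(\BC)]$ is generated by~$\Wedge^1,\Wedge^2,\Wedge^3_\pm$, this already forces local conjugacy. For~$N\ge 4$, I would pick pairwise distinct non-self-dual unitary characters~$\eta_1,\ldots,\eta_{N-3}$ of~$W_F$ with~$\eta_i\ne\eta_j^{-1}$ for~$i\ne j$, and set
\[
\psi=\bigoplus_{i=1}^{N-3}(\eta_i\oplus\eta_i^{-1}),\qquad \varphi=\varphi_0\oplus\psi,\qquad \varphi'=\varphi_0'\oplus\psi,
\]
viewed in~$\SO_{2N}(\BC)$ via the embedding~$\SO_6\times\SO_{2N-6}\hookrightarrow\SO_{2N}$. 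Each hyperbolic pair~$\eta_i\oplus\eta_i^{-1}$ sits in~$\SO_2(\BC)$, so~$\psi$ has trivial determinant; moreover~$\varphi,\varphi'$ are multiplicity-free sums of bounded irreducibles and hence tempered, in particular generic.

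For local conjugacy I would check the generators~$\Wedge^1,\ldots,\Wedge^{N-1},\Wedge^N_\pm$ of~$R[\SO_{2N}(\BC)]$. The first~$N-1$ factor through~$\GL_{2N}(\BC)$ and so agree on~$\varphi,\varphi'$ because~$\rhostd\varphi\cong\rhostd\varphi'$. For~$\Wedge^N_\pm$ the key input is a multiplicative identity in the Grothendieck ring, obtained by tracking the Hodge-star involution~$\tau_V$ on~$\Wedge^N(V_1\oplus V_2)$ with~$\dim V_1=6,\dim V_2=2N-6$: pieces~$\Wedge^a V_1\otimes\Wedge^{N-a}V_2$ with~$a\ne 3$ are interchanged by~$\tau_V$ with~$\Wedge^{6-a}V_1\otimes\Wedge^{N-6+a}V_2$ and contribute equally to~$\Wedge^N_+$ and~$\Wedge^N_-$, while on the~$\tau_V$-stable middle piece~$\Wedge^3 V_1\otimes\Wedge^{N-3}V_2$ one has~$\tau_V=(-1)^{3(N-3)}(\tau_1\otimes\tau_2)$. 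This yields
\[
[\Wedge^N_+ V]-[\Wedge^N_- V]=(-1)^{3(N-3)}\bigl([\Wedge^3_+ V_1]-[\Wedge^3_- V_1]\bigr)\otimes\bigl([\Wedge^{N-3}_+ V_2]-[\Wedge^{N-3}_- V_2]\bigr).
\]
The first factor vanishes for~$V_1$ carrying~$\varphi_0$ (or~$\varphi_0'$) by Example~\ref{so6}, so~$\Wedge^N_+\varphi\cong\Wedge^N_-\varphi$; since outer conjugation swaps~$\Wedge^N_+$ and~$\Wedge^N_-$, we also get~$\Wedge^N_\pm\varphi\cong\Wedge^N_\pm\varphi'$.

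For non-conjugacy in~$\SO_{2N}(\BC)$: since the constituents of~$\rhostd\varphi_0$ (three two-dimensional orthogonal irreducibles) are disjoint from those of~$\psi$ (the characters~$\eta_i^{\pm1}$), any intertwiner~$g\in\SO_{2N}(\BC)$ between~$\varphi$ and~$\varphi'$ preserves the isotypic decomposition and therefore has block-diagonal form~$g=g_0\oplus h$ with~$g_0\in\OO_6(\BC)$ sending~$\varphi_0$ to~$\varphi_0'$ and~$h\in\OO_{2N-6}(\BC)$ centralizing~$\psi$. Example~\ref{so6} forces~$\det g_0=-1$, so we would need~$\det h=-1$; but the centralizer of~$\psi$ in~$\OO_{2N-6}(\BC)$ is the product over~$i$ of the centralizers on each pair~$\eta_i\oplus\eta_i^{-1}$ (pairwise inequivalent and non-self-dual), each of which is the diagonal one-dimensional torus~$\{\mathrm{diag}(a,a^{-1})\}\subset\SO_2(\BC)$. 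Hence the full centralizer lies in~$\SO_{2N-6}(\BC)$, contradicting~$\det h=-1$. The main technical hurdle is the multiplicative Hodge-star identity above, which requires care with the sign~$(-1)^{ab}$ picked up when permuting factors to compare~$\tau_V$ with~$\tau_1\otimes\tau_2$; once that is established the rest is routine bookkeeping.
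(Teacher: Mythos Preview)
Your argument is correct and gives a complete proof for~$q\equiv 3\pmod 4$, which is the only case in which Example~\ref{so6} is available. As the paper does, you should note that the case~$q\equiv 1\pmod 4$ is covered by Matringe's result~\cite[Proposition~7.3]{M24}; without this citation the corollary is not fully established. (A minor quibble: multiplicity-freeness is irrelevant to temperedness---bounded image already gives tempered, hence generic.)

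The paper takes a closely related but differently packaged route. Both arguments pad the~$\SO_6$ counterexample into~$\SO_{2N}$ by adding a character-valued piece landing in a torus of~$\SO_{2N-6}(\BC)$. The paper realises this extra piece via an unramified cyclic extension of large degree~$M$ coprime to~$|\Gamma|$, so that the parameter factors through~$\Gamma\times\BZ/M\BZ$, and then follows the method of Yu~\cite[Lemma~2.3]{Yu22}; non-conjugacy is argued exactly as you do, by a centraliser computation forcing any conjugator to live in~$\GL_1^{N-3}\times\SO_6$. Local conjugacy in the paper is not verified on the generators of~$R[\SO_{2N}]$; rather it is implicit, since~$\varphi_0$ and its outer twist are already locally conjugate in~$\SO_6(\BC)$ by Example~\ref{so6}, hence pointwise conjugate by the criterion of~\cite{CG23}, and the diagonal padding preserves pointwise conjugacy. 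By contrast you verify local conjugacy directly on the generators~$\Wedge^1,\ldots,\Wedge^{N-1},\Wedge^N_\pm$, the key point being your Hodge-star multiplicativity identity, whose proof via the sign~$(-1)^{3(N-3)}$ is correct. What your route buys is a self-contained check that avoids the pointwise criterion; what the paper's route buys is an appeal to existing literature that sidesteps the exterior-algebra bookkeeping.
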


We thank Nadir Matringe for pointing out how this should be possible, using the ideas in Yu~\cite{Yu22}. We also note that Matringe proved this (in a slightly different way) when~$q\equiv 1\pmod 4$ in~\cite[Proposition~7.3]{M24}.

\begin{proof}
By~\cite[Proposition~7.3]{M24}, we need only treat the case~$q\equiv 3\pmod 4$ so we let~$\varphi:W_F \to\SO_6(\BC)$ be the Langlands parameter constructed in Example~\ref{so6}. Then there is a finite Galois extension~$K/F$ such that~$\varphi$ is trivial on~$W_K$, so~$\varphi$ factors through the finite group~$\Gamma:=\Gal(K/F)$. Now let~$L/F$ be an unramified extension of degree~$M>2N$ such that~$M$ is prime to the degree of~$K/F$, and set~$E=KL$. Then, since the degrees of the extensions are coprime, we have
\[ 
\Gal(E/F) \simeq \Gal(K/F)\times \Gal(L/F) \simeq \Gamma \times \BZ/M\BZ.
 \]
Thus~$\Gamma\times\BZ/M\BZ$ is isomorphic to the quotient~$W_F/W_E$ of~$W_F$ and we can now apply the method in~\cite[Lemma~2.3]{Yu22} to construct a parameter with image~$\SO_{2N}(\BC)$ as follows. We pick a primitive~$M^\text{th}$ root of unity~$\zeta$ in~$\BC$ and define~$\varphi'$ to be the inflation to~$W_F$ of the map
\[
\Gamma \times \BZ/M\BZ \to \SO_{2N}(\BC), \quad (\gamma, a) \mapsto \diag( \zeta^a, \ldots ,\zeta^{(N-3)a}, \varphi(\gamma), \zeta^{(3-N)a},\ldots, \zeta^{-a}),
\]
 for~$\gamma\in\Gamma$ and~$a\in\BZ/M\BZ$.

Now take~$\varphi_1=\varphi$ and let~$\varphi_2$ be its outer conjugate, from which we construct morphisms~$\varphi_1',\varphi_2'$ (with the same choice~$\zeta$).Then~$\varphi’_1,\varphi’_2$ are clearly also outer equivalent. If they were conjugate by some~$g\in\SO_{2N}(\BC)$ then we would have
\[
\varphi'_1(\gamma,a)=\Ad(g)\circ \varphi'_2(\gamma,a), \quad\text{for all }\gamma\in\Gamma, a\in\BZ/M\BZ.
\]
By considering the element~$(\gamma,a)=(1,1)$, we see that~$g$ is in the centralizer
\[ 
Z_{\SO_{2N}(\BC)}( \diag( \zeta, \ldots ,\zeta^{N-3}, \Id_6 , \zeta^{3-N},\ldots, \zeta^{-1}) \simeq \GL_1(\BC) \times \cdots \times \GL_1(\BC) \times \SO_6(\BC).
\]
But then taking the elements~$(\gamma,0)$, for all ~$\gamma\in\Gamma$, we would get that~$\varphi_1,\varphi_2$ are conjugate in~$\SO_6(\BC)$, a contradiction. 

\end{proof}

\begin{rmk}\label{rmk:SO2odd}
As remarked above, one difference between our example for~$\SO_6(\BC)$  (when~$q\equiv 3\pmod 4$) and the example of Matringe (when~$q\equiv 1\pmod 4$) is that ours is a discrete parameter whilst his is not. The parameter constructed in Corollary~\ref{cor:SO2N} is also not discrete.

However, the only thing we have really used in our~$\SO_6(\BC)$ example is the fact that the character~$\chi_{1,L}=\chi_1\circ\N_{L/E_1}$ is quadratic. By a similar (but notationally more complicated!) method, one can find a parameter~$\varphi$ for~$\SO_{2(2N+1)}(F)$ such that~$\Wedge^{2N+1}_+\circ\varphi\cong\Wedge^{2N+1}_-\circ\varphi$, so that one also sees the non-acceptability of~$\SO_{2(2N+1)}(\BC)$ manifested by a Langlands parameter. One need only add to~$\rho_{\chi_1}$ a sum of (pairwise inequivalent) irreducible~$2$-dimensional orthogonal representations of distinct positive depths induced from ramified quadratic extensions (necessarily an odd number from each of the two quadratic ramified extensions, so that the parameter has determinant~$1$).

Thus it is possible to find non-conjugate discrete parameters which are outer-equivalent for~$\SO_{2N}(F)$ whenever~$N$ is odd and~$q\equiv 3\pmod 4$. We do not give details here, nor investigate the situation for general~$\SO_{2N}(F)$ further, leaving this to future work, or others.
\end{rmk}


\section{The exceptional group~$\G_2$}\label{g2section}

Now we turn to the case of the exceptional group~$\G_2$. Since there is not yet a direct definition of twisted~$\gamma$-factors on the automorphic side, we will follow Gan--Savin~\cite{GS23b}, where they \emph{define} these~$\gamma$-factors to be those of the corresponding Langlands parameter, via their local Langlands correspondence (see~\cite{GS23a}). 

\subsection{The converse theorem for parameters for $\G_2(F)$}
We consider first a Langlands parameter $\varphi : \WD_F \rightarrow \G_2(\mathbb{C})$ for $G_2$ and the standard embedding $\rhostd : \G_2(\mathbb{C}) \hookrightarrow \GL_7(\mathbb{C})$. Then, for $\tau : W_F \rightarrow \GL_r(\mathbb{C})$ an irreducible representation, by definition we have
\[
\gamma(s, \varphi \times \tau, \psi)=\gamma(s, (\rhostd\circ \varphi) \otimes \tau, \psi).
\]
As for classical groups, we make the following definition.

\begin{defn} Let~$\varphi_1,\varphi_2$ be Langlands parameters for~$\G_2(F)$ and let~$m\ge 1$ be an integer. We say that~$\varphi_1,\varphi_2$ are \emph{$\gamma$-equivalent to level~$m$} if
\[
\gamma(s, \varphi_1 \times \tau, \psi) = \gamma(s, \varphi_2 \times \tau, \psi)
\]
for all irreducible irreducible~$r$-dimensional representations $\tau$ of $W_F$, with $1\le r\le m$.
\end{defn}

The following theorem seems to be implicit in~\cite{GS23b}, and see also~\cite[Theorem~8.2(b)]{M24}. 

\begin{thm}\label{g2converse}
Let $\varphi_1, \varphi_2$ be two Langlands parameters for $\G_2(F)$.  If~$\varphi_1,\varphi_2$ are $\gamma$-equivalent to level~$3$, then $\varphi_1 \cong \varphi_2$. 
\end{thm}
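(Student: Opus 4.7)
The plan is to reduce to two ingredients: the Local Converse Theorem for~$\GL_7(F)$ and the~$\WD_F$-acceptability of~$\G_2(\BC)$ via its standard $7$-dimensional representation~$\rhostd$.

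By the definition of twisted~$\gamma$-factors for~$\G_2$, the hypothesis of $\gamma$-equivalence to level~$3$ is exactly
\[
\gamma(s,(\rhostd\circ\varphi_1)\otimes\tau,\psi) = \gamma(s,(\rhostd\circ\varphi_2)\otimes\tau,\psi)
\]
for every irreducible representation $\tau:W_F\to\GL_r(\BC)$ with $r\le 3$. Such irreducible~$\tau$ correspond, under the local Langlands correspondence for~$\GL_r$, to supercuspidal representations of~$\GL_r(F)$, and by multiplicativity of~$\gamma$-factors the equality extends to all generic irreducible representations of~$\GL_r(F)$ for~$r\le\lfloor 7/2\rfloor=3$. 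Hence~$\rhostd\circ\varphi_1$ and~$\rhostd\circ\varphi_2$ are two semisimple $7$-dimensional Weil--Deligne representations satisfying the hypotheses of the Local Converse Theorem for~$\GL_7(F)$, so that~$\rhostd\circ\varphi_1\cong\rhostd\circ\varphi_2$ as~$\GL_7(\BC)$-valued parameters.

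It then remains to upgrade this isomorphism to~$\varphi_1\cong\varphi_2$ as~$\G_2(\BC)$-valued parameters. This is where one invokes the fact that~$\G_2(\BC)$ is~$\WD_F$-acceptable via~$\rhostd$: any two parameters~$\WD_F\to\G_2(\BC)$ whose compositions with~$\rhostd$ become conjugate in~$\GL_7(\BC)$ are already~$\G_2(\BC)$-conjugate. Structurally, this holds because~$\G_2(\BC)$ has no outer automorphisms, its centralizer in~$\GL_7(\BC)$ is just~$Z(\GL_7(\BC))$, and so its normalizer in~$\GL_7(\BC)$ is~$Z(\GL_7(\BC))\cdot\G_2(\BC)$; any intertwiner in~$\GL_7(\BC)$ between~$\rhostd\circ\varphi_1$ and~$\rhostd\circ\varphi_2$ can then be rescaled to land in~$\G_2(\BC)$.

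The main obstacle is this second step, since acceptability frequently fails -- most famously for~$\SO_{2N}(\BC)$ with~$N\ge 3$, cf. Example~\ref{so6}. For~$\G_2$ however the statement is already implicit in the construction of twisted~$\gamma$-factors in Gan--Savin~\cite{GS23b} and is made explicit by Matringe~\cite[Theorem~8.2(b)]{M24}; a self-contained argument could also be given by case analysis on the image of~$\varphi_i$ (torus, Levi, or elliptic), using in the torus case that the Weyl group of~$\G_2$ already realises every symmetry of the short-root weight system appearing in~$\rhostd$.
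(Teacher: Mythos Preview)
Your argument is essentially the same as the paper's: both reduce to showing that~$\GL_7(\BC)$-conjugacy of the composed parameters~$\rhostd\circ\varphi_i$ implies~$\G_2(\BC)$-conjugacy of~$\varphi_i$, and both ultimately appeal to Matringe~\cite[Theorem~8.2(b)]{M24} for this. The paper packages things slightly differently (Matringe's statement already gives conjugacy under the \emph{normalizer} of~$\G_2(\BC)$ in~$\GL_7(\BC)$, from which the normalizer computation~$N_{\GL_7}(\G_2)=\BC^\times\cdot\G_2(\BC)$ finishes the argument), whereas you first pass through the Local Converse Theorem for~$\GL_7$ and then invoke acceptability.

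One point to tighten: your structural justification ``any intertwiner in~$\GL_7(\BC)$ \dots\ can then be rescaled to land in~$\G_2(\BC)$'' is not correct as written. Knowing that the normalizer of~$\G_2(\BC)$ in~$\GL_7(\BC)$ is~$\BC^\times\cdot\G_2(\BC)$ does \emph{not} by itself imply that an arbitrary~$g\in\GL_7(\BC)$ conjugating~$\rhostd\circ\varphi_1$ to~$\rhostd\circ\varphi_2$ lies in that normalizer: $g$ only conjugates the image of~$\varphi_1$ (some subgroup of~$\G_2(\BC)$) to the image of~$\varphi_2$, not~$\G_2(\BC)$ to itself. Passing from~$\GL_7$-conjugacy to~$\G_2$-conjugacy is precisely the content of the acceptability of~$\G_2$, which is a genuine theorem requiring more than the normalizer computation (it uses, for instance, the invariant-theoretic characterization of~$\G_2$ as the stabilizer of a generic $3$-form, or a case analysis as you allude to). Since you do cite Matringe for this, the proof is complete, but the heuristic should be corrected or removed.
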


We will see in the next subsection how our results for classical groups can be used to show that this is the best possible, at least when~$p>3$.

\begin{proof}
By~\cite[Theorem~8.2(b)]{M24}, the hypotheses imply that~$\rhostd\circ\varphi_1,\rhostd\circ \varphi_2$ are conjugate under the normalizer in~$\GL_7(\mathbb{C})$ of~$\rhostd(\G_2(\mathbb C))$; since this normalizer is~$\mathbb{C}^\times\rhostd(\G_2(\mathbb C))$ (see the proof in \emph{loc. cit.}), we deduce that they are in fact conjugate under~$\rhostd(\G_2(\mathbb C))$, so that the parameters~$\varphi_1,\varphi_2$ are~$\G_2(\mathbb{C})$-conjugate, as required. 
\end{proof}

\begin{rmk} 
As the referee has pointed out to us, Theorem~\ref{g2converse} also follows from the fact that~$\rhostd$ factors through the inclusion~$\G_2(\mathbb C)\hookrightarrow\SO_7(\mathbb C)$: thus we get an injective map~$\Phi(\G_2)\hookrightarrow\Phi(\Sp_6)$ (see~\cite[Lemma~2.3(ii)]{GS23a}) and the result follows from the local converse theorem for~$\Sp_6$.
\end{rmk}

\subsection{On the optimal bound for parameters for $\G_2(F)$}\label{g2parameters}

For the points~$\G(F)$ of a reductive group over~$F$, we write~$\Phi_{\dss}(\G)$ for the set of discrete Langlands parameters for~$\G(F)$. By~\cite[Lemma~2.4(i)]{GS23a}, the inclusion map~$\iota:\SL_3(\mathbb{C}) \to \G_2(\mathbb{C})$ induces a map
\begin{equation}\label{eqn:PGLtoG}
\Phi_{\dss}(\PGL_3) \to \Phi_{\dss}(\G_2)
\end{equation}
whose fibres consist of orbits under the outer automorphism of~$\SL_3(\mathbb{C})$. In particular, for~$\rho : W_F \rightarrow \SL(3,\mathbb{C})$ a parameter for~$\PGL_3(F)$, we have
\[
\rhostd\circ\iota\circ\rho = \rho \oplus \boI \oplus \rho^{\vee}.
\]
We also recall that an admissible pair~$(L/F, \chi)$ gives rise to a parameter for~$\PGL_3(F)$ if and only if the induced representation~$\rho_\chi=\Ind_{L/F}\chi$ has determinant~$1$; equivalently,~$\chi|_{F^{\times}} = \varkappa_{L/F}$, where~$\varkappa_{L/F} = \det(\Ind_{L/F} \boI_{L^\times}).$  

\begin{thm}\label{G2}
Suppose $p > 3$.  There exist inequivalent discrete Langlands parameters for~$\G_2(F)$ which are~$\gamma$-equivalent to level~$2$.
\end{thm}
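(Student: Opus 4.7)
The plan is to use the embedding $\iota:\SL_3(\BC)\hookrightarrow\G_2(\BC)$ and the fibre description of~\eqref{eqn:PGLtoG} to reduce the problem to finding two admissible characters of a cubic totally ramified extension $L/F$ whose induced representations are inequivalent even up to duality, yet give the same twisted $\gamma$-factors against everything of dimension at most~$2$. Since $\rhostd\circ\iota\circ\rho = \rho\oplus\boI\oplus\rho^\vee$, the twisted $\gamma$-factor for $\varphi:=\iota\circ\rho_\chi$ against $\tau$ equals $\gamma(s,\tau,\psi)\,\gamma(s,(\rho_\chi\oplus\rho_{\chi^{-1}})\otimes\tau,\psi)$, so the situation is essentially the symplectic case of the setup of Theorem~\ref{thm:noncusp} for $N = 3$, augmented by the trivial-determinant constraint $\chi|_{F^\times} = \varkappa_{L/F}$ needed for $\rho_\chi$ to factor through~$\PGL_3$.

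First I would take $L/F$ totally ramified of degree~$3$ (tame, since $p>3$) with uniformizer $\varpi_L$ satisfying $\varpi_L^3 = \varpi_F$. By Lemma~\ref{lem:BF}(ii)(a), $\varkappa_{L/F}$ is unramified, so the determinant-one constraint only pins down the value of $\chi$ on~$\varpi_F$. I would pick a quasi-minimal $\beta\in L$ of depth~$\tfrac{1}{3}$, for instance $\beta = \varpi_L^{-1}$, and define characters $\chi,\chi'$ of $L^\times$ with common admissible restriction to~$U_L$ represented by~$\beta$, with $\chi|_{F^\times} = \chi'|_{F^\times} = \varkappa_{L/F}$, and with $\chi'(\varpi_L) = \omega\,\chi(\varpi_L)$ where $\omega\in\BC^\times$ is a primitive cube root of unity. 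The cubing relation $\chi(\varpi_L)^3 = \chi'(\varpi_L)^3 = \varkappa_{L/F}(\varpi_F)$ shows this is consistent, so by Theorem~\ref{thm:moy1} both $\rho_\chi,\rho_{\chi'}$ are irreducible three-dimensional representations of trivial determinant, hence discrete $\PGL_3$-parameters, which lift via~\eqref{eqn:PGLtoG} to discrete $\G_2$-parameters $\varphi,\varphi'$.

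For the $\gamma$-equivalence I would apply Proposition~\ref{prop:basic2} with $r = 3$ to the families $\{\chi,\chi^{-1}\}$ and $\{\chi',(\chi')^{-1}\}$, represented by $\beta$ and $-\beta$ in the common field~$L$. Condition~(i) is immediate since $\chi$ agrees with $\chi'$, and hence $\chi^{-1}$ with $(\chi')^{-1}$, on all of~$U_L$; condition~(iii) holds because the products $\chi\cdot\chi^{-1}$ and $\chi'\cdot(\chi')^{-1}$ are both trivial; and condition~(ii) reduces to $\chi(-1) = \chi'(-1)$, which holds because $\chi|_{U_F}$ is trivial (as $\varkappa_{L/F}$ is unramified). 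The proposition then gives equality of twisted $\gamma$-factors of $\rho_\chi\oplus\rho_{\chi^{-1}}$ and $\rho_{\chi'}\oplus\rho_{(\chi')^{-1}}$ against all irreducible $\tau$ of dimension less than~$3$, while the trivial summand contributes the same factor $\gamma(s,\tau,\psi)$ on both sides.

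For inequivalence, \cite[Lemma~2.4(i)]{GS23a} says that $\varphi\cong\varphi'$ in $\Phi_{\dss}(\G_2)$ iff $\rho_{\chi'}$ is isomorphic to $\rho_\chi$ or to $\rho_\chi^\vee = \rho_{\chi^{-1}}$, and by Theorem~\ref{thm:moy1} this requires some $F$-embedding $\sigma$ of $L$ with $\sigma\chi = \chi'$ or $\sigma\chi = (\chi')^{-1}$. When $L/F$ is non-Galois (i.e.\ $q\not\equiv 1\pmod{3}$), only $\sigma = \mathrm{id}$ is available, and both options fail because $\chi\neq\chi'$ on~$\varpi_L$ and because $\chi|_{U_L}$ has positive depth so is distinct from its own inverse. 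When $L/F$ is Galois the nontrivial $\sigma$ sends $\varpi_L$ to $\zeta^{-1}\varpi_L$ for a primitive cube root of unity $\zeta\in F$, hence $\beta$ to $\zeta^{-1}\beta$, which equals neither $\beta$ nor $-\beta$ since $\zeta$ has order~$3$ while $-1$ has order~$2$. I expect the main subtlety to be exactly this interplay between the determinant-one constraint and the requirement that $\chi,\chi'$ agree on all of~$U_L$: the primitive complex cube root of unity $\omega$ is the precise freedom available to produce distinct $L^\times$-characters whose cubes (and hence restrictions to $F^\times$) coincide.
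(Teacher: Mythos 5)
Your proof is correct and takes essentially the same route as the paper's: you use the embedding $\iota:\SL_3(\BC)\hookrightarrow\G_2(\BC)$, build a pair of minimal totally ramified admissible characters of a cubic extension with fixed restriction to $U_L$ and to $F^\times$ (the latter forced to be $\varkappa_{L/F}$ by the trivial-determinant constraint) differing by a primitive cube root of unity on $\varpi_L$, and feed the resulting pairs into Proposition~\ref{prop:basic2}. The only cosmetic difference is in the inequivalence step: the paper avoids the Galois/non-Galois dichotomy by observing that for every $\sigma\in\Aut(L/F)$ one has $\sigma(\varpi_L)/\varpi_L\in\mu'_F$, on which the unramified character $\varkappa_{L/F}$ is trivial, so ${}^\sigma\chi(\varpi_L)=\chi(\varpi_L)\ne\chi'(\varpi_L)^{\pm1}$ uniformly; your comparison of representatives $\sigma(\beta)$ with $\pm\beta$ accomplishes the same thing (modulo a harmless slip: $\sigma(\beta)=\zeta\beta$ rather than $\zeta^{-1}\beta$, and one should note the comparison is modulo $\CP_L^{-d/2}=\CO_L$, which is fine since the difference has valuation $-\tfrac13$), and you should make explicit that the $\sigma=\mathrm{id}$ check also applies in the Galois case.
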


\begin{proof} 
We make use of our result on symplectic groups in Theorem~\ref{thm:noncusp}. More precisely, suppose we have found minimal totally ramified admissible pairs~$(E/F,\chi_1)$ and~$(E/F,\chi_2)$, with~$E/F$ a cubic extension, such that
\begin{enumerate}
\item $\chi_i|_{F^\times}=\varkappa_{E/F}$; 
\item $\chi_1|_{U_E}=\chi_2|_{U_E}$; and
\item\label{cond.iii} $(E/F,\chi_1)$ is not equivalent to~$(E/F,\chi_2^{\pm 1})$.
\end{enumerate}
Then, writing~$\rho_i=\Ind_{E/F}\chi_i$, we get a parameter~$\varphi_i=\iota\circ\rho_i$ for~$\G_2(F)$ and a parameter~$\phi_i=\iota_L\circ\rho_i$ for~$\Sp_6(F)$ (in the notation of~\S\ref{sub:sharp}) such that~$\rhostd\circ\varphi_i \simeq \rhostd\circ\phi_i$ (where we are abusing notation by using~$\rhostd$ denote the standard representation of both~$\G_2(\mathbb{C})$ and~$\SO_7(\mathbb{C})$). Then Theorem~\ref{thm:noncusp} implies that~$\phi_1,\phi_2$ are~$\gamma$-equivalent to level~$2$, so the same is true of~$\varphi_1,\varphi_2$. Moreover, the final condition~\ref{cond.iii} ensures that~$\varphi_1,\varphi_2$ are inequivalent as parameters for~$\G_2(F)$.

Now we explain how to find admissible pairs satisfying these conditions. We take~$E/F$ to be totally ramified of degree~$3$, with uniformizer~$\varpi_E$ such that~$\varpi_E^3=\varpi_F$ lies in~$F$. Since~$p>3$, this is a tamely ramified extension and, by Lemma~\ref{lem:BF}, the character~$\varkappa_{E/F}$ is unramified of order dividing~$2$. 
 We choose any non-trivial character of~$U_E^{0+}$ which is trivial on~$U_E^{(1/3)+}$ (for example, represented by~$\varpi_E^{-1}$); since~$U_F^1\subset U_E^{(1/3)+}$, this is trivial on~$U_F^1$ so we can extend it to a character~$\chi$ of~$F^\times U_E^{0+}$ by requiring~$\chi|_{F^\times}=\varkappa_{E/F}$. 

Finally, we extend~$\chi$ to two distinct characters~$\chi_1,\chi_2$ of~$E^\times$. Any such extension must send~$\varpi_E$ to a cube root of~$\varkappa_{E/F}(\varpi_F)=\pm 1$. So we set~$\chi_1(\varpi_E)=\varkappa_{E/F}(\varpi_F)$ and~$\chi_2(\varpi_E)=\zeta \varkappa_{E/F}(\varpi_F)$, where~$\zeta\in\mathbb{C}^\times$ is a primitive cube root of unity. 

It remains only to check the final condition~\ref{cond.iii}. But if~$\sigma\in\Aut(E/F)$ then~$\sigma(\varpi_E)$ differs from~$\varpi_E$ by a cube root of unity in~$U_F$, on which~$\varkappa_{E/F}$ is trivial; hence we deduce~${}^\sigma\chi_1(\varpi_E)=\chi_1(\varpi_E) \ne \chi_2(\varpi_E)^{\pm 1}$.
\end{proof}

\begin{rmk}\label{rem:G2}
The representations~$\rho_1,\rho_2$ in the proof of Theorem~\ref{G2} have the further property that~$\rho_1\otimes\rho_1^\vee \simeq \rho_2\otimes\rho_2^\vee$. Indeed, if~$E/F$ is Galois, with automorphism group generated by~$\sigma$, then
\[
\rho_i\otimes\rho_i^\vee \simeq {\ts \bigoplus\limits_{j=0}^2} \Ind_{E/F} {}^{\sigma^j}\chi_i\chi_i^{-1};
\]
but, for fixed~$j$, the characters~${}^{\sigma^j}\chi_i\chi_i^{-1}$ certainly coincide on~$U_E$ by construction, while we have seen that~${}^\sigma\chi_i(\varpi_E)=\chi_i(\varpi_E)$ so that~${}^{\sigma^j}\chi_i\chi_i^{-1}$ are both trivial on~$\varpi_E$. On the other hand, if~$E/F$ is not Galois then, writing~$L/F$ for the unramified quadratic extension and~$EL$ for the compositum of~$E$ and~$L$, we have
\[
\rho_i\otimes\rho_i^\vee \simeq \Ind_{E/F} \boI \oplus \Ind_{EL/F} \left({}^\sigma\chi_{i,EL}\chi_{i,EL}^{-1}\right),
\]
where~$\chi_{i,EL}=\chi_i\circ\N_{EL/E}$ and~${}^\sigma\chi_{i,EL}={}^\sigma\chi_i\circ\N_{EL/\sigma(E)}$. Again, the characters~${}^\sigma\chi_{i,EL}\chi_{i,EL}^{-1}$ coincide on~$U_{EL}$ by construction, and it is easy to check they are trivial on the uniformizer~$\varpi_E$ of~$EL$.

Now 
\[ 
\Wedge^2\circ \rhostd\circ\varphi_i \simeq  \Wedge^2\left( \rho_i\oplus\boI\oplus\rho_i^\vee\right) 
\simeq \rho_i^{\oplus 2} \oplus \left(\rho_i\otimes\rho_i^\vee\right) \oplus (\rho_i^\vee)^{\oplus 2},
\] 
where we have used that~$\wedge^2\rho_i\simeq\rho_i^\vee$, since~$\rho_i$ is~$3$-dimensional and has trivial determinant. In particular, from Theorem~\ref{G2} and the isomorphism~$\rho_1\otimes\rho_1^\vee \simeq \rho_2\otimes\rho_2^\vee$, we deduce that
\[
\gamma(s,(\Wedge^2\circ\rhostd\circ\varphi_1)\otimes\tau,\psi) = \gamma(s,(\Wedge^2\circ\rhostd\circ\varphi_2)\otimes\tau,\psi),
\]
for all irreducible~$r$-dimensional representations of~$W_F$ with~$1\le r\le 2$. Moreover, since~$\Wedge^2\circ \rhostd \simeq\rhostd\oplus\Ad$ is the sum of the two fundamental representations of~$\G_2(\mathbb{C})$, multiplicativity of~$\gamma$-factors implies that
\[
\gamma(s,(\varrho\circ\varphi_1)\otimes\tau,\psi) = \gamma(s,(\varrho\circ\varphi_2)\otimes\tau,\psi),
\]
for all fundamental representations~$\varrho$ of~$\G_2(\mathbb{C})$ and all irreducible~$r$-dimensional representations of~$W_F$ with~$1\le r\le 2$. 

Thus the necessity of twisting by~$3$-dimensional representations of~$W_F$ to distinguish parameters is not an artefact of only considering the standard representation of~$\G_2(\mathbb{C})$. 
%
\end{rmk}

\subsection{The optimal local converse theorem for~$\G_2$}\label{g2lct}
Finally, we interpret the results of the previous sections in terms of irreducible representations of~$\G_2(F)$. We have, from~\cite{GS23a} (see also~\cite{HKT21} and~\cite{AX22}) a local Langlands correspondence for~$\G_2(F)$; for~$\pi$ an irreducible representation of~$\G_2(F)$, we write~$\varphi_\pi: \WD_F\to \G_2(\mathbb{C})$ for the corresponding Langlands parameter, and by~$\Pi_{\pi}$ the~$L$-packet of~$\pi$, which contains a unique generic representation. For~$\tau$ an irreducible representation of~$\GL_m(F)$ with Langlands parameter~$\varphi_\tau$, as in~\cite{GS23b} we \emph{define}
\[
\gamma(s,\pi\rtimes\tau,\varrho,\psi) := \gamma(s,(\varrho\circ\varphi_{\pi})\otimes\varphi_\tau,\psi),
\]
for~$\varrho$ any algebraic representation of~$\G_2(\BC)$.

Writing~$\Irrgen_\scusp(\G(F))$ for the set of equivalence classes of irreducible generic supercuspidal representations of the points~$\G(F)$ of a reductive group, the map in~\eqref{eqn:PGLtoG} corresponds to a map
\[
\Irrgen_{\scusp}(\PGL_3) \to \Irrgen_{\scusp}(\G_2),
\]
which is given by an exceptional theta correspondence, and is in fact part of how the Langlands correspondence is defined (see also~\cite[Theorem~1.8]{SWe11}). Since the parameters used in the construction in the proof of Theorem~\ref{G2} are indeed for supercuspidal representations of~$\PGL_3(F)$, the previous results of this section (also using Remark~\ref{rem:G2}) translate to give:

\begin{thm}\label{thm:converseG2}
\begin{enumerate}
\item Let~$\pi_1,\pi_2$ be irreducible generic representations of~$\G_2(F)$ which are~$\gamma$-equivalent to level~$3$. Then~$\pi_1\simeq\pi_2$.
\item Suppose~$p>3$. Then there are inequivalent generic supercuspidal representations~$\pi_1,\pi_2$ of~$\G_2(F)$ such that
\[
\gamma(s,\pi_1\rtimes\tau,\varrho,\psi) := \gamma(s,\pi_2\rtimes\tau,\varrho,\psi) ,
\]
for~$\varrho$ either fundamental representation of~$\G_2(\BC)$, and all supercuspidal representations of~$\GL_r(F)$, with~$1\le r\le 2$; in particular, they are~$\gamma$-equivalent to level~$2$.
\end{enumerate}
\end{thm}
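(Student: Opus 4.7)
The plan is to derive both parts of the theorem by transferring the corresponding parameter-side results (Theorem~\ref{g2converse} and Theorem~\ref{G2}, together with Remark~\ref{rem:G2}) through the local Langlands correspondence for~$\G_2(F)$ of Gan--Savin, using the fact that every tempered $L$-packet for~$\G_2(F)$ contains a unique generic representation relative to the fixed Whittaker datum. Since the twisted $\gamma$-factors on the representation side are \emph{defined} by
\[
\gamma(s,\pi_i\rtimes\tau,\varrho,\psi) = \gamma(s,(\varrho\circ\varphi_{\pi_i})\otimes\varphi_\tau,\psi),
\]
$\gamma$-equivalence of representations is tautologically $\gamma$-equivalence of their parameters, so the translation in both directions of the correspondence does no real work.

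For part~(1), I would first observe that the hypothesis with $\varrho=\rhostd$ says precisely that $\varphi_{\pi_1},\varphi_{\pi_2}$ are $\gamma$-equivalent to level~$3$. Theorem~\ref{g2converse} then forces $\varphi_{\pi_1}\cong\varphi_{\pi_2}$, placing $\pi_1$ and $\pi_2$ in the same $L$-packet; uniqueness of the generic element in that packet yields $\pi_1\simeq\pi_2$.

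For part~(2), I would begin with the two inequivalent Langlands parameters $\varphi_1,\varphi_2\colon W_F\to\G_2(\BC)$ produced in the proof of Theorem~\ref{G2}. By construction they factor through $\iota\colon\SL_3(\BC)\hookrightarrow\G_2(\BC)$, and the underlying $\SL_3(\BC)$-valued parameters are irreducible and correspond to supercuspidal representations of $\PGL_3(F)$. Under the map~\eqref{eqn:PGLtoG}, which is realised by the exceptional theta correspondence underpinning the Gan--Savin construction, they give generic supercuspidal representations $\pi_1,\pi_2$ of~$\G_2(F)$ with parameters $\varphi_1,\varphi_2$; inequivalence of parameters transfers to inequivalence of the $\pi_i$ via uniqueness of the generic representation in each $L$-packet. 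Theorem~\ref{G2} provides the required equality of twisted $\gamma$-factors to level~$2$ for $\varrho=\rhostd$, and Remark~\ref{rem:G2} supplies the corresponding equality for the adjoint representation by combining the decomposition $\Wedge^2\circ\rhostd\simeq\rhostd\oplus\Ad$ with the isomorphism $\rho_1\otimes\rho_1^\vee\simeq\rho_2\otimes\rho_2^\vee$ established there.

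The only nontrivial point, and hence the main thing to check, is that the supercuspidal parameters of $\PGL_3(F)$ coming from our admissible pairs are sent by the Gan--Savin theta lift to genuinely supercuspidal (not merely tempered) generic representations of~$\G_2(F)$; this is built into the construction of the correspondence in~\cite{GS23a} and in~\cite{SWe11}, so no further analytic input is required beyond citing those constructions.
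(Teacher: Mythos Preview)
Your proposal is correct and follows essentially the same route as the paper: both parts are obtained by transporting Theorem~\ref{g2converse}, Theorem~\ref{G2}, and Remark~\ref{rem:G2} through the Gan--Savin local Langlands correspondence for~$\G_2(F)$, using that the~$\gamma$-factors on the representation side are defined via parameters and that each~$L$-packet has a unique generic member. Two very minor clean-ups: drop the word ``tempered'' in your first paragraph (part~(1) concerns arbitrary generic representations, and the paper's~$\G_2$ correspondence asserts uniqueness of generics in every~$L$-packet, not just tempered ones); and in part~(2) the inequivalence of~$\pi_1,\pi_2$ follows simply because inequivalent parameters give distinct~$L$-packets, so uniqueness of generics is not needed there.
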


\section{Ramakrishnan conjecture}\label{sec:ramakrishnan}
In this final section, we show that we also cannot use only~$\gamma$-functions of character twists to determine a supercuspidal representation of~$\GL_N(F)$. We recall that, in~\cite[Conjecture~1.2]{YZ22}, it is conjectured that if~$\pi_1,\pi_2$ are unitarizable supercuspidal representations of~$\GL_N(F)$ such that
\[
\gamma(s, \pi_1\rtimes \eta,\Wedge^i, \psi) = \gamma(s, \pi_2\rtimes \eta,\Wedge^i, \psi),
\]
for~$i=1,\ldots,\left\lfloor\tfrac N2\right\rfloor$ and~$\eta$ any character of~$F^\times$, then~$\pi_1\cong\pi_2$. This is of course true for~$N=2,3$ since it is then simply the Local Converse Theorem for~$\GL_N(F)$. On the other hand, for~$N\ge 6$ the requisite~$\gamma$-functions have not yet been defined on the automorphic side of the Langlands correspondence so we interpret these~$\gamma$-factors on the Galois side as for~$\G_2$ and as in the introduction. 

We show here that this conjecture is false already for~$\GL_4(F)$, when~$p$ is odd. Indeed, we give examples of inequivalent irreducible~$4$-dimensional representations~$\varphi_1,\varphi_2$ of~$W_F$, corresponding to irreducible unitarizable supercuspidal representations of $\GL_4(F)$, such that
\begin{equation}\label{eqn:ram}
\gamma(s, (\Wedge^i\circ\varphi_1)\otimes\eta, \psi) = \gamma(s, (\Wedge^i\circ\varphi_2)\otimes\eta, \psi),
\end{equation}
for~$i=1,\ldots,4$ and all characters~$\eta$ of~$W_F$. 

First, we remark that if a supercuspidal representation of $\GL_N(F)$ has unitary central character, then it is unitarizable (see \cite[p.4]{KT24}).  We also remark that if $(E/F, \chi)$ is an admissible pair, then its corresponding supercuspidal representation $\pi_{\chi}$ via LLC has unitary central character if and only if $\chi|_{F^{\times}}$ is unitary.  Note that $\chi|_{\mathcal{O}_F^{\times}}$ is automatically unitary since $\mathcal{O}_F^{\times}$ is compact.  

So suppose~$p$ is odd and let~$(E/F,\chi)$ be any totally ramified unitary admissible pair of degree~$4$ whose depth~$d$ has least denominator~$2$. For example, we could choose~$\beta$ to be the quasi-minimal element~$\varpi_E^{-10}+\varpi_E^{-7}$, where~$\varpi_E$ is a uniformizer of~$E$, and take~$\chi$ to be any unitary quasi-character of~$E^\times$ represented by~$\beta$. Then we take~$(E/F,\chi')$ to be the admissible pair such that~$\chi^{-1}\chi'$ is unramified of order~$2$. Then~$\chi,\chi'$ coincide on~$F^\times$, and also on~$\beta$ since~$4\val(\beta)$ is an even integer.

Now note that
\[
\rho_\chi \cong \rho_{\chi'}\otimes \eta_8,
\]
where~$\eta_n$ denotes an unramified character of~$W_F$ of order~$n$ (the choice is unimportant), since~$\eta_8\circ\N_{E/F}$ is the unramified quadratic character of~$E^\times$. In particular, since also~$\rho_\chi\cong\rho_\chi\otimes\eta_4$, we have~$\Wedge^2\circ\rho_\chi\cong\Wedge^2\circ\rho_{\chi'}$, and similarly,~$\omega_\chi=\Wedge^4\circ\rho_\chi=\Wedge^4\circ\rho_\chi=\omega_{\chi'}$. On the other hand, the pairing~$\Wedge^3\circ\rho_\chi\times\Wedge^1\circ\rho_\chi\to\Wedge^4\circ\rho_\chi$ gives us~$\Wedge^3\circ\rho_\chi\cong\rho_\chi^\vee \otimes\omega_{\chi}$.

Thus, putting~$\varphi_1=\rho_\chi$ and~$\varphi_2=\rho_{\chi'}$, to prove~\eqref{eqn:ram} we are reduced to verifying
\[
\gamma(s, \rho_{\chi}\otimes\eta, \psi) = \gamma(s, \rho_{\chi'}\otimes\eta, \psi) \quad\text{ and }\quad \gamma(s, \rho_{\chi^{-1}}\otimes\eta, \psi) = \gamma(s, \rho_{\chi'^{-1}}\otimes\eta, \psi),
\]
for all characters~$\eta$ of~$W_F$. But these follow immediately from Proposition~\ref{prop:basic2}.

\color{black}

\bibliographystyle{alpha}
\bibliography{sharpness}

\end{document}